\documentclass[reqno]{amsart}
\usepackage{amsmath, amsfonts, amssymb, latexsym, amsthm, amscd, cancel,
  enumerate, rotating, comment, appendix, hyperref,soul,
  times,graphicx,tikz,tikz-cd,pgf}
\usepackage[capitalise, noabbrev]{cleveref}
\usepackage[all]{xy}
\usepackage[latin1]{inputenc}
\usepackage[english]{babel}
\usepackage[mathscr]{eucal}

\usetikzlibrary{calc,arrows,snakes,shapes,matrix, decorations.pathmorphing}
\tikzset{>=latex}

\newtheorem{theorem}{Theorem}[section]
\newtheorem{lemma}[theorem]{Lemma}
\newtheorem{corollary}[theorem]{Corollary} 
\newtheorem{proposition}[theorem]{Proposition}

\theoremstyle{definition}
\newtheorem{definition}[theorem]{{Definition}}
\newtheorem{example}[theorem]{Example}
\newtheorem{remark}[theorem]{Remark}
\newtheorem{discussion}[theorem]{Discussion}
\newtheorem{notation}[theorem]{Notation}
\newtheorem{terminology}[theorem]{Terminology}
\newtheorem{construction}[theorem]{Construction}

\theoremstyle{remark}
\newtheorem*{claim}{Claim}

\numberwithin{equation}{theorem}

\newif\ifdviwin

\newcommand{\kk}{\textup{k}}
\newcommand{\m}{{\mathfrak m}}

\newcommand{\dstab}{\operatorname{dstab}}
\newcommand{\reg}{\operatorname{reg}}

\newcommand{\im}{\operatorname{im}}
\newcommand{\ba}{\mathbf{a}}
\newcommand{\bb}{\mathbf{b}}
\newcommand{\bc}{\mathbf{c}}

\newcommand{\bm}{{\mathbf{m}}}
\newcommand{\bma}{{\bm^{\ba}}}
\newcommand{\bmb}{{\bm^{\bb}}}
\newcommand{\bmc}{{\bm^{\bc}}}

\newcommand{\bpi}{{\pmb{\pi}}}
\renewcommand{\bpi}{\Pi}

\newcommand{\pd}{\operatorname{pd}}
\newcommand{\gr}{\operatorname{gr}}

\newcommand{\cM}{\mathcal M}
\newcommand{\cN}{\mathcal N}
\newcommand{\supp}{\operatorname{Supp}}
\newcommand{\osigma}{\overline{\sigma}}

\newcommand{\lcm}{\operatorname{lcm}}

\newcommand{\NN}{\mathbb{N}}
\newcommand{\F}{\mathcal{F}}
\newcommand{\dom}{\mathfrak{d}}
\newcommand{\LCM}{\operatorname{LCM}}
\newcommand{\taylor}{\operatorname{Taylor}}
\newcommand{\ssm}{\smallsetminus}
\newcommand{\qand}{\quad \mbox{ and } \quad}
\newcommand{\qor}{\quad \mbox{ or } \quad}
\newcommand{\qif}{\quad \mbox{ if } \quad}

\begin{document}
\bibliographystyle{amsplain}

\author[S.M.~Cooper]{Susan M. Cooper}
\address{Department of Mathematics\\
University of Manitoba\\
520 Machray Hall\\
186 Dysart Road\\
Winnipeg, MB\\
Canada R3T 2N2}
\email{susan.cooper@umanitoba.ca}

\author[S.~El Khoury]{Sabine El Khoury}
\address{Department of Mathematics,
American University of Beirut,
Bliss Hall 315, P.O. Box 11-0236,  Beirut 1107-2020,
Lebanon}
\email{se24@aub.edu.lb}

\author[S.~Faridi]{Sara Faridi}
\address{Department of Mathematics \& Statistics\\
Dalhousie University\\
6316 Coburg Rd.\\
PO BOX 15000\\
Halifax, NS\\
Canada B3H 4R2}
\email{faridi@dal.ca}

\author[S~Mayes-Tang]{Sarah Mayes-Tang}
\address{Department of Mathematics\\
University of Toronto\\
40 St. George Street, Room 6290\\
Toronto, ON \\
Canada M5S 2E4}
\email{smt@math.toronto.edu}

\author[S.~Morey]{Susan Morey}
\address{Department of Mathematics\\
Texas State University\\
601 University Dr.\\
San Marcos, TX 78666\\USA}
\email{morey@txstate.edu}

\author[L.~M.~\c{S}ega]{Liana M.~\c{S}ega}
\address{Department of Mathematics and Statistics\\
   University of Missouri - Kansas~City\\  Kansas~City\\ MO 64110\\ USA}
     \email{segal@umkc.edu}

\author[S.~Spiroff]{Sandra Spiroff }
\address{Department of Mathematics,
University of Mississippi,
Hume Hall 335, P.O. Box 1848, University, MS 38677
USA}
\email{spiroff@olemiss.edu}

\subjclass[2010]{Primary 13A15, 13D02, 05E40; Secondary 13C15}
\keywords{monomial ideal,  projective dimension one, powers of ideal, minimal resolution, Discrete Morse Theory}
\title[Morse Resolutions of Powers]
{Morse resolutions of powers of square-free monomial ideals 
of projective dimension one}

\begin{abstract} 
  Let $I$ be a square-free monomial ideal of
   projective dimension one. Starting with the Taylor complex on the
   generators of $I^r$, we use Discrete Morse theory to describe a CW
   complex that supports a minimal free resolution of $I^r$. To do so,
   we concretely describe the acyclic matching on the faces of the
   Taylor complex.
\end{abstract}

\maketitle

\section{Introduction}

The powers of an ideal $I$ in a ring $R$ have a
  fascinating but poorly understood structure.  Describing these
  powers, and algebraic invariants associated to them, is a vibrant
  area of mathematics, with problems and tools arising from algebra,
  geometry, and combinatorics. Even when the ideal
    itself is well understood, the powers typically have unexpected
    behavior, making their classification difficult.
    
In general there is more known about the asymptotic behavior of the
collection $\{I^r \}$ as $r \rightarrow \infty$ than about the
individual ideals $I^r$, see M.~Brodmann~\cite{B79},
V. Kodiyalam~\cite{K93}, S.~D.~Cutokosky, J.~Herzog, and
N.~V.~Trung~\cite{CDHT} and Kodiyalam~\cite{K00}.  On the other hand,
relatively little is known about the structure of the resolutions of
the individual ideals $I^r$. Although many invariants of $I$ are known
to stabilize for large powers of $I$, these invariants can exhibit
poor behavior for powers below the stable point.

  In the case where the ideal $I$ is a monomial ideal (and hence so
  are all its powers $I^r$), the problem of finding free resolutions
  can be translated into finding combinatorially-described topological
  objects whose chain maps can be adapted to provide a free resolution
  of the ideal. D.~Taylor's thesis~\cite{T} initiated this approach by
  encoding the faces of a simplex with the least
  common multiples of the monomial generators of the ideal. Over the
  last few decades, Taylor's construction has been generalized
  (\cite{BS,P,BW}) to other topological objects using the same idea:
  label the vertices of a topological object $\Gamma$ with monomials
  $m_1,\ldots,m_q$ and look for conditions under which the cellular
  chain complex of $\Gamma$ can be homogenized to describe a free
  resolution of $I=(m_1,\ldots,m_q)$. In this case we say $\Gamma$
  {\bf supports a free resolution} of $I$, and $I$ has a cellular
  resolution, and we use combinatorial and homological information
  about $\Gamma$ to gain information about invariants of $I$.

This point of view allows one to use homotopy theory to describe free
resolutions of ideals as the cellular resolutions of homotopy
equivalent topological objects. Discrete Morse Theory - an adaptation
of Morse theory of manifolds to the discrete setting - is one such
homotopy theory.  It was proven by R.~Forman~\cite{For} that a discrete
Morse function $f$ on the set of cells of a CW complex $X$ produces a
CW complex $X_f$ which is homotopy equivalent to
$X$. M.~Chari~\cite{Char} reinterpreted Forman's Morse function as
acyclic matchings. E.~Batzies and V.~Welker~\cite{BW,Ba} showed that if the
matchings are homogeneous, then this construction leads to a
multigraded resolution. As demonstrated in~\cite{BW}, Morse theory has
great potential in the study of resolutions, providing a clever tool
to shrink large free resolutions into smaller ones, see also \cite{BM}, \cite{EN}, and \cite[Chapter 6]{JJ09}. It is important to
stress that not all monomial ideals have \emph{minimal} cellular
resolutions; see M.~Velasco~\cite{Ve}.

Inspired by recent work of J.~\`{A}lvarez Montaner, O.~
  Fern\'{a}ndez-Ramos, P.~Gimenez ~\cite{spanish} where
  acyclic matchings are used to prune extra faces from the Taylor complex in
  order to achieve a smaller resolution for a given monomial ideal, 
our approach in this paper is to define homogeneous acyclic matchings
on the poset lattice of the Taylor complex of $I^r$ for a
square-free monomial ideal  $I$ of projective dimension one.

The  structure of ideals of projective
  dimension one has an illustrious history. In 1890
  D.~Hilbert~\cite{hilbert}, in a result extended by
  L.~Burch~\cite{burch}, described the structure of these ideals in
  terms of the minors of a presentation matrix, resulting in the
  celebrated Hilbert-Burch theorem. (See \cite[Theorem
    20.15]{eisenbud} for the statement and historical context.)
  
Our motivation for this paper is  the classification of
resolutions of monomial ideals of projective dimension one in
B. Hersey's Master's thesis (\cite{FH,H}) as cellular resolutions
supported on graphs.  In this paper, we exploit the structure of the
graph supporting a minimal free resolution of $I$ to find a CW complex
that supports a free resolution of $I^r$. In addition, we give
concrete descriptions of the cells and gradient paths of the resulting
Morse complex to show that the resolution is minimal.

 Our main results can be summarized in the following statement.

\begin{theorem}[\cref{mainresult,t:minimality}]\label{main}
	If $I$ is an ideal of projective dimension one in a polynomial
        ring over a field, minimally generated by $q$ square-free
        monomials, and $r$ is a positive integer, then there exists a
        homogeneous acyclic matching on the Taylor complex of $I^r$
        (that is explicitly defined using the structure of a graph
        supporting the resolution of $I$) such that the resulting
        CW complex supports a minimal free resolution of $I^r$.
\end{theorem}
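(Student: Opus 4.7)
The plan is to construct a homogeneous acyclic matching $\mathcal{A}$ on the face poset of the Taylor complex $T_{I^r}$ whose critical cells, together with gradient-path incidences, form a CW complex that minimally resolves $I^r$. By Hersey's classification \cite{FH,H}, the minimal resolution of $I$ itself is already supported on a graph $G$ with vertex set $\{m_1,\ldots,m_q\}$; the combinatorics of $G$ will be the principal guide for how to match faces of $T_{I^r}$. First I would identify the generators of $I^r$ with $r$-element multisets drawn from $\{m_1,\ldots,m_q\}$, so that a face of $T_{I^r}$ becomes a collection of such multisets labeled by the $\lcm$ of its members.

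Next I would fix a total order on the vertices of $G$ and a linear order on its edges, and then, for each face $F$ of $T_{I^r}$, scan its multisets in search of the first ``swap along an edge $e=\{g,g'\}$ of $G$'': a configuration in $F$ that differs from another element by replacing $g$ with $g'$ and which leaves $\lcm(F)$ unchanged. If such a swap exists, pair $F$ with the appropriate $F\cup\{v\}$ or $F\setminus\{v\}$; otherwise declare $F$ critical. Homogeneity is built in, since each match preserves the $\lcm$ label by construction. The critical cells should, in the end, be described explicitly as combinatorial data packaging a subset of edges of $G$ together with an $(r-1)$-fold multiset remainder, matching the expected Betti numbers of $I^r$.

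The first real obstacle is acyclicity. Following the strategy of Batzies--Welker \cite{BW,Ba} and Chari \cite{Char}, I would introduce a strict monovariant on faces (for instance the lexicographic position of the first swappable vertex together with the index of the triggering edge) and verify that it decreases along any alternating cycle in the matched digraph, thereby ruling out cycles. The final step is minimality: by \cref{mainresult} the resulting Morse CW complex supports a free resolution of $I^r$, and minimality reduces to the standard multigraded criterion that no two critical cells of consecutive dimensions share a multidegree. The hardest part of the argument will be producing a description of the critical cells and the gradient paths between them explicit enough both to verify this non-collision property and to confirm that the matching has been exhausted; this is precisely what \cref{t:minimality} asserts.
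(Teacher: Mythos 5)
Your high-level strategy tracks the paper's quite closely (identify generators of $I^r$ with multisets / tuples, match faces by tree-edge swaps, use Cluster-Lemma-style decompositions, and verify minimality by comparing multidegrees across consecutive dimensions), but as written there are several genuine gaps where the real work lives.

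First, the parameterization of the generators of $I^r$ by $r$-multisets is not free: for a general monomial ideal, distinct multisets $\ba\neq\bb$ with $|\ba|=|\bb|=r$ can give $\bma=\bmb$, and then the Taylor complex of $I^r$ has fewer vertices than you expect. The paper proves (\cref{same-ems}) that $\pd_S(I)=1$ forces the map $\ba\mapsto\bma$ to be injective on $\cN_r$, using exactly the free-vertex structure of the quasi-tree. Your proposal needs this uniqueness but does not establish it.

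Second, "scan for the first swap along an edge $e=\{g,g'\}$ of $G$ that leaves $\lcm(F)$ unchanged" under-determines both the matching and the proof that it is a matching. The paper instead defines, for each face $\sigma$, a distinguished vertex $\pi(\sigma)=\pi_{\dom(\sigma)}(\max_\prec\sigma)$ and matches $\sigma$ with $\sigma\setminus\{\pi(\sigma)\}$; the choice of which generator is "dominant" (the index $\dom(\sigma)$) is made using the linear order $\prec$ and the joint function $\tau$ from \cref{construct}, not by a lex scan for a swap. Without such a canonical choice, you cannot verify that each face sits in at most one matched edge, and your proposed monovariant for acyclicity has nothing concrete to decrease. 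Relatedly, the key technical fact that the swap preserves the $\lcm$ is not automatic from "leaves $\lcm(F)$ unchanged by construction" — it is a nontrivial computation (\cref{mainprop}) that uses $m_{\tau(k)}\mid\lcm(m_k,m_j)$ for $j<k$, which in turn is where $\pd_S(I)=1$ enters.

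Third, the minimality step is the most delicate one and your sketch only names the obstacle. The difficulty is that the face relation $\sigma'_A\le\sigma_A$ in the Morse complex is mediated by gradient paths in $G_X^A$, not by containment of critical cells in $X$; you need an explicit characterization of which critical cells are adjacent in $X_A$ (\cref{t:critical-cells}) before the $\lcm$-comparison argument can even be formulated, and that characterization is itself a multi-lemma argument (\cref{l:destination}, \cref{l:critical-k}, \cref{l:still}, \cref{l:grad}). As a blind plan your proposal points in the right direction, but none of the load-bearing steps are actually carried out.
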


  As an immediate corollary, we recover an explicit formula for the
  projective dimensions, or equivalently the depths, of the powers of
  $I$ (\cref{c:pd-I^r,c:dstab}), and we are able to
    describe the regularity of large powers of $I$ in \cref{c:reg}.

While the focus of this paper is on Morse resolutions, it is worth
noting that in the case of monomial ideals of projective dimension
one, these minimal resolutions can also be obtained from a Koszul
complex, see~\cite{koszul}.  Our work here, however, aligns with a
different type of question (see also Engstr\"{o}m and
Noren~\cite{EN}): \emph{Starting from a (minimal) free resolution of
  $I$ supported on a cell (simplicial) complex $X$, how close to a
  minimal free resolution of $I^r$ can we get by using homogeneous acyclic
  matchings to reach a smaller CW complex $X^r$ supporting a free
  resolution of $I^r$?}

The paper is organized as follows. \cref{s:setup} provides the basic
definitions of simplicial and CW complexes, cellular resolutions, and
the particular case of ideals of projective dimension one.  In
\cref{s:matchings} we develop the Morse matching on the faces of the
Taylor complex, which will later be used in \cref{mainsection} to
construct the Morse complex supporting a free resolution of $I^r$.  In
\cref{s:ppd1} we develop the necessary monomial labelings for the
Morse complex, and in \cref{s:morse} we describe the cells of the
Morse complex supporting the resolution, all of which will allow us to
prove, in \cref{minimalsection}, that the cellular resolution of the
Morse complex homogenizes to a minimal free resolution of $I^r$.

  \section{Setup}\label{s:setup}

\subsection{Simplicial complexes and quasi-trees}
 We begin by recalling some standard definitions and notations. A simplicial complex $\Delta$ on a vertex set $V$ is a set of subsets
of $V$ such that $F \in \Delta$ and $G \subseteq F$ implies that $G
\in \Delta$. An element $F$ of $\Delta$ is called a {\bf face}, and a
maximal face of $\Delta$ (under inclusion) is called a {\bf facet}. A
simplicial complex $\Delta$ can be uniquely determined by its facets
$F_1,\ldots,F_q$, so we write $\Delta=\langle  F_1,\ldots,F_q\rangle.$
  A {\bf simplex} is a simplicial complex with only one facet.
 The {\bf dimension} of a face $F \in \Delta$ is $\dim (F)= |F|-1$, and the
  {\bf dimension of $\Delta$} is $\dim(\Delta)=\max\{\dim(F) \mid F
  \in \Delta\}$. The $0$-dimensional faces are called {\bf vertices}
  and the face $\varnothing$ has dimension $-1$.

  A facet $F$ of $\Delta$ is a {\bf leaf}~\cite{F02} if $F$ is the
  only facet of $\Delta$ or if there is another facet $G$ of $\Delta$,
  called a {\bf joint} such that $(F \cap H) \subseteq G$ for every
  facet $H \neq F$.
  A {\bf free vertex} of $\Delta$ is a vertex belonging to exactly one
  facet of $\Delta$.  If $F$ is a leaf of a simplicial complex, then
  $F$ necessarily has a free vertex~\cite{F02}.
  A {\bf quasi-forest}~\cite{Z} is a simplicial complex $\Delta$ whose
  faces can be\ ordered as $F_1,\ldots,F_q$ such that for each $i \in
  \{1,\ldots,q\}$, the facet $F_i$ is a leaf of $\langle
  F_1,\ldots,F_i \rangle$.  A connected quasi-forest is called a {\bf
    quasi-tree}.
    
  If $\Delta=\langle F_1,\ldots,F_q \rangle$ is a simplicial complex
  on the vertex set $V$, then the {\bf complement of $\Delta$} is the
  simplicial complex $$\Delta^c=\langle F_1^c,\ldots,F_q^c \rangle$$
  where $F_i^c= V \ssm F_i$.

    These definitions give a variety of ways to construct square-free monomial ideals by means of simplicial complexes and vice versa. While traditionally a simplicial complex was associated to its Stanley-Reisner ring \cite[Chapter II]{S}, and more recently many authors have studied edge ideals, defined in \cite{V} (or more generally facet ideals defined in \cite{F02}), the focus of this paper will be to exploit the connections between the square-free monomial generators of an ideal $I$ and the structure of $\Delta^c$.

\subsection{Resolutions supported on simplicial complexes}
\label{ss:supported}

Assume that $S=\kk[x_1, \ldots, x_n]$ is a polynomial ring over a
field $\kk$ and $I=(m_1,\ldots,m_q)$ is an ideal generated by
monomials.  Let $\LCM(I)$ denote the $\lcm$-lattice of $I$, that is,
the poset consisting of the $\lcm$'s of the generating set
$m_1,\ldots,m_q$ ordered by divisibility.

A {\bf graded free resolution of $I$} is an exact
sequence of free $S$-modules of the form:
$$ \mathbb{F} \colon 0 \to G_d \to \cdots \to G_i
\stackrel{\partial_i}{\longrightarrow} G_{i-1} \to \cdots \to G_1
\stackrel{\partial_1}{\longrightarrow} G_0$$ where $I \cong
G_0/\im(\partial_1)$, and each map $\partial_i$ is graded, in the
sense that it preserves the degrees of homogeneous elements.

If $\partial_i(G_i) \subseteq (x_1,\ldots,x_n) G_{i-1}$ for every
$i>0$, then the free resolution $\mathbb{F}$ is {\bf minimal}. The
grading on each of the free modules $G_i$ can be further refined by
writing $G_i$ as a direct sum of $1$-dimensional free $S$-modules of
the form $S(m)$ indexed by monomials $m$. In particular, when
$\mathbb{F}$ is a minimal resolution, $$G_i \cong \bigoplus_{m\in
  \LCM(I)} S(m) ^{\beta_{i,m}}$$ where the $\beta_{i,m}$ are
invariants of $I$ called the {\bf multigraded Betti numbers} of $I$.
The length of a minimal free resolution of $I$ ($d$ in the case of
$\mathbb{F}$ above) is another invariant of $I$ called the {\bf
  projective dimension} and denoted by $\pd_S(I)$.

One concrete way to calculate a multigraded free resolution is to use
chain complexes of topological objects. This approach was initiated
by Taylor~\cite{T}, and further developed by Bayer and Sturmfels~\cite{BS}
and many other researchers.

Given a monomial ideal $I=(m_1,\ldots,m_q)$, the {\bf Taylor complex}
$\taylor(I)$ of $I$ is a simplex on $q$ vertices, each of which is labeled by
a monomial generator of $I$, and where each face is labeled by the lcm of
the monomial labels of its vertices.

Below we provide a simple example of a Taylor complex, chosen
as the smallest interesting one. It will be used as a running example
throughout the paper
for the purpose of demonstrating the subtleties in the later
constructions via the diagrams and complexes associated to this ideal
and its powers.  For the ideal $I$ below, even $\taylor(I^2)$, for
instance, is already quite complicated. (See Example
\ref{shadedtriangle}.)

\begin{example}\label{e:running} If $I=(xy,yz,zu)$, then the 
  Taylor complex of $I$ is below. 

  $$\begin{tikzpicture}
\coordinate (Z) at (-4, 1);
\coordinate (A) at (0, 0);
\coordinate (B) at (1, 1);
\coordinate (C) at (2, 0);
\coordinate (D) at (.5, .65);
\coordinate (E) at (1, 0);
\coordinate (F) at (1.75, .4);
\coordinate (G) at (1, .75);
\draw[black, fill=black] (A) circle(0.05);
\draw[black, fill=black] (B) circle(0.05);
\draw[black, fill=black] (C) circle(0.05);
\draw[fill=lightgray] (B) -- (C) -- (A) -- cycle;
\draw[-] (A) -- (B);
\draw[-] (B) -- (C);
\draw[-] (A) -- (C);
\node[label = below left:$xy$] at (A) {};
\node[label = above :$yz$] at (B) {};
\node[label = below right:$zu$] at (C) {};
\node[label = left :$xyz$] at (D) {};
\node[label = below :$xyzu$] at (E) {};
\node[label = above :$yzu$] at (F) {};
\node[label = below :$xyzu$] at (G) {};
\node[label = below :$\taylor(I):$] at (Z) {};
\end{tikzpicture}
$$
\end{example}

When $I$ is generated by $q$ monomials, one can observe that the
simplicial chain complex of $\taylor(I)$ (which is transformed into a
free resolution of $I$) has length equal to the dimension of
$\taylor(I)$, which is $(q-1)$. Since every free resolution contains a
minimal free resolution, we can see that $\pd_S(I) \leq q-1$.

Given $I$ generated by $q$ monomials, one could take any subcomplex
$\Gamma$ of the Taylor complex which includes all $q$ vertices and ask
whether its simplicial (or CW) chain complex could give a free
resolution of $I$ (we refer the interested reader to the book~\cite{P}
for a detailed exposition and further references in this area). If the
chain complex of $\Gamma$ produces a free resolution of $I$, we say
$\Gamma$ {\bf supports a resolution} of $I$, and in particular we will
have $\pd_S(I) \leq \dim(\Gamma) \leq q-1$. For $\Gamma$ to support a
minimal free resolution, we would need at the very least that
$\dim(\Gamma)=\pd_S(I)$.

\subsection{Ideals of projective dimension $1$}

If $\Gamma$ supports a minimal free resolution of $I$ and
$\pd(I)=1$,  then $\dim (\Gamma) = 1$ and so $\Gamma$ is an acyclic graph and hence a tree. Faridi and Hersey
proved that these two conditions are in fact equivalent.

Before stating the theorem, we recall a standard definition.
Let $S=\kk[x_1, \ldots, x_n]$ and $\Delta$ be a simplicial complex
whose vertices are labeled with the variables $x_1,\ldots,x_n$. Then
the square-free monomial ideal $$I=(x_{i_1}\cdots x_{i_r} \mid
\{x_{i_1},\ldots,x_{i_r}\} \mbox{ is a facet of } \Delta)$$ is called
the {\bf facet ideal of $\Delta$}, denoted by $\F(\Delta)$, and
$\Delta$ is called the {\bf facet complex of $I$}, denoted by $\F(I)$.
{Similarly, $\F(I)^c$ denotes the complex with facets $F_i^c$ where $F_i$ are the facets of $\F(I)$.

\begin{theorem}[{\cite[Theorem 27]{FH}}]\label{t:FH} 
Let $I$ be a square-free monomial ideal 
in a polynomial ring $S$. Then the following statements are equivalent.
  \begin{enumerate}  
    \item $\pd_S(I) \leq 1$;
    \item $\F(I)^c$ is a quasi-forest;
    \item The Alexander dual of the Stanley-Reisner complex of $I$ is a quasi-forest;
    \item $S/I$ has a minimal free resolution supported on a graph,
      which is a tree.
  \end{enumerate}
\end{theorem}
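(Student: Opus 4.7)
The plan is to establish the four-way equivalence via the cycle $(2) \Rightarrow (4) \Rightarrow (1) \Rightarrow (2)$, together with the observation that $(2) \Leftrightarrow (3)$ holds essentially by definition. For the latter, I would unpack the Alexander dual of the Stanley--Reisner complex $\Delta_{S/I}$: its facets are the complements (in the variable set) of the minimal non-faces of $\Delta_{S/I}$, and those minimal non-faces are precisely the supports of the minimal generators of $I$, i.e.\ the facets of $\F(I)$. Hence the Alexander dual coincides with $\F(I)^c$ as a simplicial complex, so $(2)$ and $(3)$ say the same thing.

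For $(2) \Rightarrow (4)$, I would construct the tree directly from a quasi-forest ordering of $\F(I)^c$. Fix an ordering $F_1^c, \ldots, F_q^c$ of the facets so that each $F_i^c$ is a leaf of $\langle F_1^c, \ldots, F_i^c \rangle$ with chosen joint $F_{j_i}^c$ for $i \geq 2$. Let $T$ be the graph with vertices $v_1,\ldots,v_q$ labeled by the generators $m_1,\ldots,m_q$ of $I$ and edges $\{v_i, v_{j_i}\}$ labeled by $\lcm(m_i, m_{j_i})$; this is a forest (a tree if $\F(I)^c$ is connected). To verify that $T$ supports a free resolution I would apply the Bayer--Sturmfels acyclicity criterion by induction on $q$, peeling off the last leaf $F_q^c$ and checking that, for every monomial $\sigma$, the subcomplex of $T$ consisting of vertices and edges whose labels divide $\sigma$ remains acyclic. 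Minimality then follows because every leaf of $\F(I)^c$ has a free vertex, which forces $\lcm(m_i, m_{j_i}) \neq m_i$ and $\neq m_{j_i}$, so distinct faces carry distinct labels.

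The implication $(4) \Rightarrow (1)$ is immediate since a tree has dimension one. The step I expect to be the main obstacle is $(1) \Rightarrow (2)$. For this I would argue the contrapositive via Hochster's formula: if $\Delta$ denotes the Stanley--Reisner complex of $I$, then for every squarefree monomial $\sigma$ with support $W$,
\[
\beta_{i,\sigma}(S/I) \;=\; \dim_\kk \tilde{H}_{|\sigma|-i-1}(\Delta_W;\kk).
\]
Assume $\F(I)^c$ is not a quasi-forest, so no induced subcomplex of $\F(I)^c$ (on a subset of vertices) admits a leaf. Translating this obstruction across Alexander duality — which exchanges the leaf/joint structure on $\F(I)^c$ with topological simplification of restrictions of $\Delta$ — I would exhibit a subset $W$ and an index $i \geq 3$ for which $\tilde{H}_{|W|-i-1}(\Delta_W;\kk) \neq 0$, forcing $\pd_S(S/I) \geq 3$, i.e.\ $\pd_S(I) \geq 2$. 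The delicate point is identifying a specific induced subcomplex that carries a non-vanishing homology class in the correct degree; the natural candidate is built from a minimal obstruction to the quasi-forest property (a ``cycle'' of facets of $\F(I)^c$ in which every facet meets the union of the others in a set not contained in any single one of them), and the work is in verifying that the homological degree produced by Alexander duality is indeed $\geq 2$ rather than $1$.
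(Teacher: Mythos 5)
The paper does not prove \cref{t:FH}; it cites the result directly from Faridi--Hersey \cite{FH} (and the tree construction you sketch for $(2)\Rightarrow(4)$ is essentially reproduced in the paper's \cref{construct} without proof of correctness). So there is no in-paper argument to compare against, and the proposal must stand on its own.

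On its own terms, the proposal has a genuine gap that you yourself flag: the implication $(1)\Rightarrow(2)$ is not actually established. You reduce it, via Hochster's formula, to exhibiting a subset $W$ of the vertex set with $\tilde H_{|W|-i-1}(\Delta_W;\kk)\ne 0$ for some $i\ge 3$ whenever $\F(I)^c$ fails to be a quasi-forest, but you do not identify such a $W$ or verify the degree. This is precisely the nontrivial content of the theorem; without it the cycle of implications is broken and the equivalence is not proved. There is also a misstatement in the setup of this step: ``$\F(I)^c$ is not a quasi-forest'' does not mean ``no induced subcomplex admits a leaf.'' The correct negation (using the standard characterization of quasi-forests as complexes in which every nonempty subcollection of facets has a leaf) is that \emph{some} subcollection of facets has no leaf; you would then need to locate your nonvanishing homology inside that particular subcollection. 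The remaining implications are fine in outline: $(2)\Leftrightarrow(3)$ is indeed definitional, $(4)\Rightarrow(1)$ is immediate from $\dim T=1$, and $(2)\Rightarrow(4)$ via the Bayer--Sturmfels acyclicity criterion on the tree $T$ (together with the free-vertex argument for minimality) is the standard route, though one still needs to carry out the inductive connectivity check of $T_{\le m}$ rather than merely assert it.
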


 This theorem will be used to associate a graph in part (4) to a monomial ideal of projective dimension one in part (1) and its associated complex in part (2). Part (3) is included to show connections to the broader field, including the theory of Stanley-Reisner. The graph in part~(4) of \cref{t:FH} is constructed
  based on an ordering of facets of the quasi-forest $\F(I)^c$ in
  part~(2), or equivalently an ordering of the minimal monomial
  generating set of $I$.  In fact, as seen in \cref{construct} the minimal generators of $I$ are obtained from  the product of the variables that are in the complement of the facets. Since the ordering of facets in a quasi-tree requires that every facet introduces a new variable,  it follows that the number of generators of $I$ is less or equal than $n$, see also \cref{x}. Furthermore, \cref{construct} is a procedure whose output
  is the ordered monomial generating set $m_1,\ldots,m_q$ for $I$,
  which we will use throughout the paper to build a cellular
  resolution for $I^r$. \cref{construct} also gives the definition of the {\bf joint function} $\tau$ which will be used heavily in the rest of the paper.

\begin{construction}[{\bf An order on the generators of $I$}]\label{construct}
  Let $I$ be a square-free monomial ideal  with $\pd_S(I)\le 1$.
\begin{enumerate}
\item Order the facets of $\Delta=\F(I)^c$ as $F_1, F_2,
  \dots, F_q$ such that $F_i$ is a leaf of $\Delta_i=\langle F_1,
  \dots, F_i\rangle$.
\item Start with the one vertex tree $T_1=(V_1, E_1)$ where 
$V_1=\{v_1\}$ and $E_1=\varnothing$. 
\item If $i=1$, set $\tau(1)=1$. For $i=2,\dots, q$ do the following: 
\begin{enumerate}[$\bullet$]
\item Pick $u < i$ such that $F_u$ is a joint of $F_i$ in
  $\Delta_i$. {\bf Set $\tau(i)=u$};
\item Set $V_i=V_{i-1}\cup \{v_i\}$;
\item Set $E_i=E_{i-1}\cup \{ (v_i, v_u)\}$. 
\end{enumerate}
\item The result is a tree $T=(V_q, E_q)$ with $q$ vertices. We label the vertex $v_i$ of
  $T$ with the monomial
$$
m_i=\prod_{x_t\in F_i^c}x_t.
$$ 
\end{enumerate} The monomials $m_1,\ldots,m_q$ form a
  minimal generating set of $I$, ordered as in Step~(1) above.
\end{construction}

\begin{example}\label{e:running1} If  $I=(xy,yz,zu)$ is the 
  ideal in \cref{e:running}, it is easy to check using a software such as Macaulay 2 \cite{M2} that $\pd_S(I)=1$. Set
  $\Delta = \F(I)^c$, which is a simplicial complex with facets
  $\{z,u\}$, $\{x,u\}$, $\{x,y\}$. In fact, $\Delta$ is a
  quasi-tree. Consider the facet order on $\Delta$: $F_1=\{z,u\}$,
  $F_2=\{x,u\}$ and $F_3=\{x,y\}$ pictured below. 

\begin{align*}
\begin{pgfpicture}
\pgfnodecircle{Node9}[virtual]{\pgfxy(-5.25, 0)}{0.1cm}
\pgfnodecircle{Node1}[fill]{\pgfxy(-1, 0)}{0.1cm}
\pgfnodecircle{Node2}[fill]{\pgfxy(0, 0)}{0.1cm}
\pgfnodecircle{Node3}[fill]{\pgfxy(1, 0)}{0.1cm}
\pgfnodecircle{Node4}[fill]{\pgfxy(2, 0)}{0.1cm}
\pgfnodeconnline{Node1}{Node2} \pgfnodeconnline{Node2}{Node3}
\pgfnodeconnline{Node3}{Node4} 
\pgfputat{\pgfxy(-1,-.3)}{\pgfbox[left,center]{$z$}}
\pgfputat{\pgfxy(0,-.3)}{\pgfbox[left,center]{$u$}}
\pgfputat{\pgfxy(1, -.3)}{\pgfbox[left,center]{$x$}}
\pgfputat{\pgfxy(2, -.3)}{\pgfbox[left,center]{$y$}}
\pgfputat{\pgfxy(-.6,.3)}{\pgfbox[left,center]{$F_1$}}
\pgfputat{\pgfxy(.4,.3)}{\pgfbox[left,center]{$F_2$}}
\pgfputat{\pgfxy(1.4, .3)}{\pgfbox[left,center]{$F_3$}}
\pgfputat{\pgfxy(-5.25, .0)}{\pgfbox[left,center]{$\Delta:$}}
\end{pgfpicture}
\end{align*}

We also have $\tau(3)=2$ and $\tau(2)=1$. Following \cref{construct} we obtain a tree $G$ with vertices indexed
by the monomials generating $I$ which supports a minimal free
resolution of $I$.

\begin{align*}
\begin{pgfpicture}
\pgfnodecircle{Node9}[virtual]{\pgfxy(3.25, .5)}{0.1cm}
\pgfnodecircle{Node1}[fill]{\pgfxy(8.25, 0)}{0.1cm}
\pgfnodecircle{Node2}[fill]{\pgfxy(9, 1)}{0.1cm}
\pgfnodecircle{Node3}[fill]{\pgfxy(9.75, 0)}{0.1cm}
\pgfnodeconnline{Node1}{Node2}
\pgfnodeconnline{Node2}{Node3}
\pgfputat{\pgfxy(7.9,-.3)}{\pgfbox[left,center]{$xy$}}
\pgfputat{\pgfxy(8,.6)}{\pgfbox[left,center]{$xyz$}}
\pgfputat{\pgfxy(8.8, 1.3)}{\pgfbox[left,center]{$yz$}}
\pgfputat{\pgfxy(9.5,.6)}{\pgfbox[left,center]{$yzu$}}
\pgfputat{\pgfxy(9.7, -.3)}{\pgfbox[left,center]{$zu$}}
\pgfputat{\pgfxy(3.25, .5)}{\pgfbox[left,center]{$G:$}}
\end{pgfpicture}
\end{align*}

\end{example}

\subsection{Cell complexes and cellular resolutions}

As a generalization of the Taylor resolution, one could homogenize the
cellular chain complexes of CW complexes to obtain free resolutions of
monomial ideals, which has the advantage that often there are fewer
faces in each dimension, meaning that the cellular resolution will 
potentially be closer to a minimal one. This idea was first developed by
Bayer and Sturmfels~\cite{BS} and has been expanded by many
authors since. Morse resolutions are an example of cellular
resolutions.

To formally define a finite regular CW complex (also
  known as a finite regular cell complex), we will use
  \cite{M80, OW}. Let $B^n$, with $n \geq 1$, denote the $n$-dimensional closed ball 
  $$
  B^n=\left\{(a_1, \dots, a_n)\in \mathbb R^n\mid \sum_{i=1}^n a_i^2\le 1\right\}\,.
  $$ The $n$-dimensional open ball is the interior $\text{int}({B^n})$
  of $B^n$. The $(n-1)$-dimensional sphere $S^{n-1}$ is the boundary
  of $B^n$. 

  A topological space is called a(n) {\bf (open) cell} of dimension $n$,
  or $n$-cell, if it is homeomorphic to $\text{int}(B^n)$  when $n\geq 1$ and to a point when $n=0$.
  A {\bf cell decomposition} of a space $X$ is a family $\Gamma=\{ c_i:i\in
  \mathcal I\}$ of pairwise disjoint subspaces of $X$ such that each
  $c_i$ is a cell and $X=\bigcup_{i\in \mathcal I}c_i$. We say that
  $\Gamma$ is finite when  the index set $\mathcal I$ is finite.

Given a cell decomposition as above and $n\ge 0$, let $\Gamma^n$
denote the set of all $n$-cells in $\Gamma$. The $n$-{\bf skeleton} of
$X$ is the subspace
$$
X^{n}=\bigcup_{0\le i\le n} \  \bigcup_{c\in \Gamma^i}c\,.
$$
The elements of the $0$-skeleton $X^0$ are called {\bf vertices}.

\begin{definition}
\label{d:cell}
A {\bf finite CW complex} is a Hausdorff space $X$, together with a
finite cell decomposition $\Gamma$ such that for each $n\ge 0$ and
$c\in \Gamma^n$ there is a continuous map $\Phi_c\colon B^n\to X$ that
restricts to a homeomorphism $\Phi_c\big{|}_{\text{int}(B^n)}\colon
\text{int}(B^n)\to c$ and takes $S^{n-1}$ into $X^{n-1}$. A finite CW
complex is also referred to as a finite {\bf cell complex}. When the
cell decomposition $\Gamma$ is understood (or implied), we will use
only the letter $X$ to refer to the CW complex.
\end{definition}

As described in~\cite[Lemma 2.2.6]{OW}, CW complexes can
also be defined by constructing the skeleton sets recursively, in
terms of a procedure of adjunction of cells of increasing dimensions,
starting with a discrete set of points as the $0$-skeleton.

\section{Acyclic matchings}\label{s:matchings}

In this section we set the foundations for the construction of a Morse
resolution of $I^r$ from an acyclic matching on the face poset of the
Taylor complex. We define the Morse complex itself in
\cref{mainresult}, give a detailed description of it in
\cref{t:critical-cells} and prove it supports a minimal resolution of
$I^r$ in \cref{t:minimality}.

We begin with some background in discrete Morse theory.

\begin{terminology}\label{amatching} 
Let $V$ be a finite set. We denote by $2^V$ the set of subsets of
$V$. Let $Y$ denote a subset of $2^V$. We call the elements $\sigma$
of $Y$ {\bf cells}. If $\sigma$ has exactly $n$
elements, then it is called an {\bf $(n-1)$-cell}. A cell with
only one element is a $0$-cell, and the empty set is a $(-1)$-cell.
 We note that the Taylor complex is a simplex consisting of the set of all subsets of a finite set $V$ of vertices. When convenient, we view the faces of the Taylor complex as cells of the $2^V$ set. 

We define the {\bf directed graph} $G_Y$ whose vertex
  set $\{\sigma \mid \sigma \in Y\}$ is  the set of cells of $Y$, and
with directed edges $E_Y$ consisting of $\sigma\to \sigma'$ with
$\sigma'\subseteq \sigma$ and $|\sigma|=|\sigma'|+1$.
A {\bf matching} of $G_Y$ is a set $A\subseteq E_Y$ of edges of $G_Y$
with the property that each cell of $Y$ occurs in at most one edge of
$A$.  A {\bf cycle} is a series of $n \geq 3$ directed edges $\sigma_0 \to \sigma_1 \to \cdots \to \sigma_n$ with $\sigma_0=\sigma_n$ and $\sigma_i \not= \sigma_j$ for $0 \leq i < j < n$.

For a matching $A$, let $G^A_Y$ be the graph whose edge set is
$$
E_Y^A=(E_Y\ssm A)\cup \{\sigma'\to \sigma \mid \sigma\to \sigma'\in A\}.
$$ Note the reversal of the direction of the edges in $A$.  In this
edge set, we think of the oriented edges in $E_Y\ssm A$ as pointing
down and the oriented edges $\sigma' \to \sigma$ with $\sigma \to
\sigma'$ in $A$ as pointing up.  In an abuse of language/notation, we
will simply say that the edges in $A$ \lq\lq point
up\rq\rq. Note that an edge that goes down corresponds
  to a decrease in the cardinality of the cells it connects, while an
  edge that points up corresponds to an increased cardinality. 
\end{terminology}

By definition, a cycle in $G_Y^A$ cannot have two consecutive edges in
$A$ since consecutive edges share a cell of $Y$. Since each edge of
$G_Y^A$ connects cells whose cardinalities differ by precisely one, a
cycle must contain the same number of edges that point down as edges
that point up. Combined, these two observations show that edges of a
cycle in $G_Y^A$ alternate between edges in $A$ and edges not in $A$.

The diagram in \cref{f:cyclefig} describes a cycle  in $G_Y^A$ for a matching $A$, 
where $r>1$ and for $i=1, \ldots, r$ with indices mod
  $r$ we have $\sigma_i\to \sigma'_{i+1}$ are edges in $E_Y\ssm A$ and
$\sigma_i\to \sigma'_{i}$ are edges in $A$, which are reversed
in $E_Y^{A}$ .  

\begin{figure}
\centerline{
\begin{tikzpicture}[label distance=-5pt] 
\coordinate (A) at (0, 0);
\coordinate (B) at (0, 1);
\coordinate (C) at (1, 0);
\coordinate (D) at (1,  1);
\coordinate (E) at (2, 0);
\coordinate (F) at (2,1);
\coordinate (G) at (3,0);
\coordinate (H) at (3, 1);
\coordinate (I) at (3.5, .5);
\coordinate (J) at (4.5, .5);
\coordinate (K) at (5, 0);
\coordinate (L) at (5, 1);
\coordinate (M) at (6, 0);
\coordinate (N) at (6,  1);
\coordinate (Z) at (3, -.5);
 \draw[black, fill=black] (A) circle(0.04);
\draw[black, fill=black] (B) circle(0.04);
 \draw[black, fill=black] (C) circle(0.04);
 \draw[black, fill=black] (D) circle(0.04);
 \draw[black, fill=black] (E) circle(0.04);
 \draw[black, fill=black] (F) circle(0.04);
 \draw[black, fill=black] (G) circle(0.04);
  \draw[black, fill=black] (H) circle(0.04);
  \draw[black, fill=black] (K) circle(0.04);
\draw[black, fill=black] (L) circle(0.04);
 \draw[black, fill=black] (M) circle(0.04);
 \draw[black, fill=black] (N) circle(0.04);
\draw[->] (A) -- (B);
\draw[->] (B) -- (C);
\draw[->] (C) -- (D);
\draw[->] (D) -- (E);
\draw[->] (E) -- (F);
\draw[->] (F) -- (G);
\draw[->] (G) -- (H);
\draw[-] (H) -- (I);
\draw[->] (J) -- (K);
\draw[->] (K) -- (L);
\draw[->] (L) -- (M);
\draw[->] (M) -- (N);
\draw[->] (N) -- (A);
\node[label = below :$\sigma'_0 $] at (A) {};
\node[label = above:$\sigma_0$] at (B) {};
\node[label =  below :$\sigma'_1$] at (C) {};
\node[label = above :$\sigma_1$] at (D) {};
\node[label = below :$\sigma'_2$] at (E) {};
\node[label = above:$\sigma_2$] at (F) {};
\node[label = below:$\sigma'_3$] at (G) {};
\node[label = above:$\sigma_3$] at (H) {};
\node[] at (3.75,0) {$\ldots$};
\node[] at (3.75,1) {$\ldots$};
\node[label = below :$\sigma'_{r-2} $] at (K) {};
\node[label = above:$\sigma_{r-2}$] at (L) {};
\node[label =  below :$\sigma'_{r-1}$] at (M) {};
\node[label = above :$\sigma_{r-1}$] at (N) {};
\end{tikzpicture}}
\caption{From \cite[page 54]{JJ} (with slightly different terminology)}\label{f:cyclefig}
\end{figure}
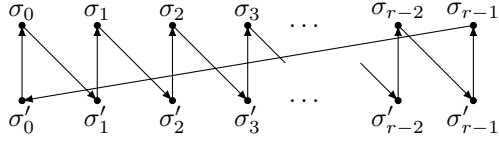

We say $A$ is {\bf acyclic} if $G^A_Y$ is acyclic (i.e. does not
contain directed cycles).  Given an acyclic matching $A$, the cells of
$Y$ that do not appear in the edges of the matching are called the
$A$-{\bf critical cells}; these are cells which are
unmatched.  The following lemma shows how acyclic matchings can be contracted or preserved in alternate ambient spaces.

\begin{lemma}\label{inclusions}
Let $Y\subseteq 2^V$. 
\begin{enumerate}[\quad\rm(1)] 
\item Let  $Y'\subseteq 2^V$ such that $Y\subseteq Y'$. If $A\subseteq E_Y$, then $A$ is an acyclic matching of $G_{Y'}$ if and only if $A$ is an acyclic matching of $G_Y$. 
\item If $A$ is an acyclic matching of $G_Y$ and $A'\subseteq A$, then $A'$ is an acyclic matching of $G_Y$ as well. 
\end{enumerate}
\end{lemma}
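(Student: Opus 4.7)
The plan is to unpack both statements directly from the definitions of matching and acyclicity, relying on the alternating up/down structure of cycles in $G_Y^A$ described in \cref{f:cyclefig}.

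For part~(1), the matching condition is immediate: since $A\subseteq E_Y$, every cell appearing in an edge of $A$ already lies in $Y$, so any cell of $Y'\ssm Y$ is vacuously in at most one edge of $A$. Hence the matching conditions on $G_Y$ and on $G_{Y'}$ coincide. For acyclicity, I would show that a directed cycle exists in $G_{Y'}^{A}$ if and only if one exists in $G_Y^A$. One direction is clear since $G_Y^A$ is contained in $G_{Y'}^A$ as a subgraph (the larger graph only adds cells in $Y'\ssm Y$ together with downward edges incident to them). For the converse, given a cycle in $G_{Y'}^A$, all its up-edges are reversals of edges in $A\subseteq E_Y$, so their endpoints lie in $Y$; and each down-edge in the cycle shares its endpoints with two adjacent up-edges, forcing those endpoints into $Y$ as well. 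Thus every edge of the cycle lies in $E_Y^A$, and the cycle occurs in $G_Y^A$.

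For part~(2), the matching condition is immediate, as a subset of a matching is a matching. For acyclicity, assume toward a contradiction that $G_Y^{A'}$ contains a directed cycle of the form
\[
\sigma_0'\to\sigma_0\to\sigma_1'\to\sigma_1\to\cdots\to\sigma_{r-1}\to\sigma_0'
\]
as in \cref{f:cyclefig}, where all listed cells are distinct. I would then show this same sequence of edges is a cycle in $G_Y^A$, contradicting that $A$ is acyclic. The up-edges $\sigma_i'\to\sigma_i$ are reversals of edges of $A'\subseteq A$, so they appear unchanged in $G_Y^A$. The crucial point is that each down-edge $\sigma_i\to\sigma_{i+1}'$ cannot belong to $A$: if it did, then the cell $\sigma_i$ would be the larger endpoint of two edges of $A$, namely $\sigma_i\to\sigma_i'\in A'\subseteq A$ and $\sigma_i\to\sigma_{i+1}'$. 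Since $A$ is a matching, these two edges must coincide, forcing $\sigma_i'=\sigma_{i+1}'$, which contradicts the distinctness of the cells along the cycle. Hence each down-edge lies in $E_Y\ssm A$, so it appears unchanged in $G_Y^A$ as well, and the cycle transfers, giving the required contradiction.

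The reasoning is largely bookkeeping; the only real subtlety, and the step that truly uses the matching axiom, is the rule-out in part~(2) that a down-edge of the cycle cannot lie in $A\ssm A'$. Distinctness of the cycle vertices in \cref{amatching} is exactly what closes that gap.
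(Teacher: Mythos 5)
Your proof is correct. For part~(1) it is essentially the paper's argument: the paper also notes that edges of $A$ only touch cells of $Y$, so a hypothetical cycle in $G_{Y'}^A$ passing through a cell of $Y'\ssm Y$ would have two consecutive downward edges at that cell, contradicting the alternation of up- and down-edges established after \cref{amatching}; your phrasing (every cell of the cycle is an endpoint of an up-edge, hence lies in $Y$) is the contrapositive of the same observation. For part~(2) the paper gives no argument at all, deferring to Lemma~3.2.3 of Batzies' thesis, whereas you supply a complete proof. Your key step there is sound: a down-edge $\sigma_i\to\sigma_{i+1}'$ of a cycle in $G_Y^{A'}$ cannot lie in $A$, since $\sigma_i$ already lies in the edge $\sigma_i\to\sigma_i'$ of $A'\subseteq A$ and the matching property of $A$ together with the distinctness of the cycle's cells (and the requirement of at least three edges, so that $\sigma_i'\neq\sigma_{i+1}'$) rules this out; hence the cycle survives unchanged in $G_Y^A$. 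So your write-up is a strict refinement of what the paper records, with no gaps.
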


\begin{proof}  
(1) Assume $A$ is an acyclic matching on $G_Y$.  We show that $A$ is
  an acyclic matching on $G_{Y'}$. We see that $A$ is a matching on
  $G_{Y'}$ because the cells in $Y'\ssm Y$ do not occur in
  any of the edges of $A$, since $A\subseteq E_Y$. We need to show: If
  $G_Y^A$ is acyclic, then $G_{Y'}^A$ is acyclic. Thus, assume $G_Y^A$
  is acyclic. If there were a cycle in $G_{Y'}^A$ then it would involve
  at least one cell $\sigma\in Y'\ssm Y$. Since $A\subseteq
  E_Y$, there are no edges of $A$ that have $\sigma$ as a
  vertex. Therefore, the edges of the cycle that contain $\sigma$
  are both in $E_{Y'}\ssm A$, hence they both point
  down. However one cannot have in a cycle two consecutive edges that
  point down, as was previously observed; see \cref{f:cyclefig}. 

If $A$ is an acyclic matching on $G_{Y'}$, it is clear that it is also an acyclic matching on $G_Y$. 

(2) See for example~\cite[Lemma 3.2.3]{Ba}.
\end{proof}

 The next lemma will be used later as a source of acyclic matchings.

\begin{lemma}\label{matching-lemma}
Let $v\in V$. Then
\begin{equation*}
\begin{aligned}
A_Y^{v} = \big\{\sigma\to \sigma'\in E_Y\mid \, v\in \sigma   {\text{ and }} \sigma'=\sigma\ssm \{v\}\big\}.
\end{aligned}
\end{equation*}
is an acyclic matching on $G_Y$. 
\end{lemma}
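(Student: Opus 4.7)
The plan is to verify the two defining conditions separately: first that $A_Y^v$ is a matching (every cell of $Y$ appears in at most one edge of $A_Y^v$), then that it is acyclic.

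For the matching property, I would argue by a dichotomy on whether or not $v$ belongs to a cell $\tau \in Y$. If $v \in \tau$, then by the very definition of $A_Y^v$, $\tau$ can only appear as the \emph{source} of an edge in $A_Y^v$, and the only candidate is $\tau \to \tau \ssm \{v\}$; this is actually an edge of $A_Y^v$ precisely when $\tau \ssm \{v\}\in Y$. Symmetrically, if $v \notin \tau$, then $\tau$ can only appear as the \emph{target} of an edge in $A_Y^v$, and the only candidate is $\tau \cup \{v\} \to \tau$, present in $A_Y^v$ exactly when $\tau \cup \{v\} \in Y$. In either case $\tau$ lies in at most one edge of $A_Y^v$, so $A_Y^v$ is a matching.

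For acyclicity, I would reason by contradiction. Suppose there is a directed cycle in $G_Y^{A_Y^v}$. By the observations recalled just before the lemma (reprinted by the authors from \cite{JJ}), the edges of a cycle alternate strictly between up-edges (reversed edges of $A_Y^v$) and down-edges (edges of $E_Y \ssm A_Y^v$ in their original orientation). The key structural observation is that each type of edge is constrained with respect to $v$:
\begin{itemize}
\item An up-edge $\sigma' \to \sigma$ comes from an edge $\sigma \to \sigma\ssm\{v\}$ of $A_Y^v$, so its source $\sigma'$ does not contain $v$ while its target $\sigma$ does. In other words, an up-edge \emph{inserts} $v$.
\item A down-edge $\tau \to \tau \ssm \{w\}$ lies in $E_Y \ssm A_Y^v$, which by definition of $A_Y^v$ forces $w \ne v$ whenever $v \in \tau$. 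Therefore, if $v$ lies in the source of a down-edge, it also lies in its target: down-edges never \emph{remove} $v$.
\end{itemize}
Now traverse the cycle. Just after any up-edge, the current cell contains $v$; by the second bullet, $v$ remains in the current cell after every subsequent down-edge, hence for the rest of the walk. But the cycle, being alternating, must eventually use another up-edge, whose source by the first bullet does \emph{not} contain $v$, a contradiction. (The alternative possibility, that the cycle contains no up-edge at all, is excluded since a cycle must contain equal numbers of up- and down-edges by cardinality balance.)

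The main obstacle is really the acyclicity argument, and specifically isolating the invariant "$v$ remains in the cell once it has been added." Once this monotonicity is stated, the contradiction is immediate; most of the work is just verifying that the edges of $E_Y \ssm A_Y^v$ obey it, which follows directly from the definition of $A_Y^v$. The matching step is mostly bookkeeping.
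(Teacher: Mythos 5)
Your proof is correct and takes essentially the same approach as the paper: verify the matching property by case analysis, then derive a contradiction from a hypothetical alternating cycle by tracking how up- and down-edges interact with $v$. The only cosmetic difference is in packaging the acyclicity step: you phrase it as the invariant ``once $v$ is inserted it never leaves the cell,'' while the paper compares cardinalities to conclude that two consecutive top cells $\sigma_0, \sigma_1$ of the cycle would have to coincide --- both arguments hinge on the same observation that the source of an up-edge cannot contain $v$.
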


\begin{proof}
Set $A=A_Y^{v} $.  To show that $A$ is a matching, assume $\sigma\in Y$
belongs to two edges in $A$. If the two edges are $\sigma\to \sigma'$
and $\sigma\to \sigma''$, then $\sigma'=\sigma''=\sigma\ssm
\{v\}$. If the two edges are $\sigma'\to \sigma$ and $\sigma''\to
\sigma$, then $\sigma'=\sigma''=\sigma\cup \{v\}$. If the edges are
$\sigma \to \sigma'$ and $\sigma''\to \sigma$, then we see that $v\in
\sigma$ because of the first edge and $v\notin\sigma$ because of the
second edge, a contradiction.

To show that the matching is acyclic, we need to show that $G_Y^{A}$
is acyclic. Assume there is a cycle in $G_Y^A$. Label it as in
\cref{f:cyclefig}. With the notation as in \cref{amatching}, for all
$i$ we have $\sigma'_i\subseteq \sigma_i$, $|\sigma_i|=|\sigma_{i+1}|$, and
$|\sigma'_i|=|\sigma'_{i+1}|$, where $|\sigma_i|=|\sigma'_i|+1$.  Since
$\sigma_i\to \sigma'_{i}\in A$, we see that $v\in \sigma_i$ and $v\notin
\sigma'_i$.

The cardinality of $\sigma'_1$ is one less the cardinality of $\sigma_0$, hence $\sigma_0=\sigma'_1\cup \{v'\}$ for some $v'\in V$. Since $v\in \sigma_0$, but $v\notin \sigma'_1$, we must have $v=v'$.  However, $\sigma'_1\cup \{v\}=\sigma_1$, and hence $\sigma_1=\sigma_0$,  a contradiction.\end{proof}

The following fact, which is a reformulation of Lemma 4.2 in
\cite{JJ}, is essential to \cref{mainresult}:

\begin{lemma}[{\bf Cluster Lemma} {\cite[Lemma 4.2]{JJ}}]\label{clusterlemma} 
Let $Y$ be a family of finite subsets of a set $V$.  Assume that there
exists a partition $Y=\bigcup_{q\in Q}Y_q$ indexed by a poset $Q$ with
the following property:
$$
\text{If $\sigma\in Y_q$ and $\sigma'\in Y_{q'}$ satisfy $\sigma' \subseteq \sigma$, then $q'\le q$.}
$$
Let $A_q$ be an acyclic matching of $G_{Y_q}$ for each $q$. Then the union $A=\bigcup_{q\in Q}A_q$ is an acyclic matching of $G_Y$. 
\end{lemma}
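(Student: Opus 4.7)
The plan is to verify separately that $A$ is a matching and that $G_Y^A$ is acyclic. For the matching property, observe that each edge of $A_q$ lies in $E_{Y_q}$, so its two endpoints both belong to $Y_q$. Since $\{Y_q\}_{q\in Q}$ partitions $Y$, any cell $\sigma\in Y$ lies in a unique $Y_q$ and hence can only appear in edges coming from $A_q$. Because $A_q$ is already a matching on $G_{Y_q}$, the cell $\sigma$ appears in at most one edge of $A_q$, and therefore in at most one edge of $A=\bigcup_q A_q$.

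For acyclicity, I would argue by contradiction. Suppose $G_Y^A$ contains a cycle; writing it in the notation of \cref{f:cyclefig}, the cycle has the form
\[ \sigma_0\to \sigma'_1\to \sigma_1\to \sigma'_2\to \cdots \to \sigma_{r-1}\to \sigma'_0\to \sigma_0, \]
where the edges $\sigma_i\to\sigma'_{i+1}$ (indices modulo $r$) lie in $E_Y\ssm A$ and satisfy $\sigma'_{i+1}\subseteq \sigma_i$, while the edges $\sigma'_i\to \sigma_i$ are reversals of matching edges $\sigma_i\to \sigma'_i\in A$. Each matching edge $\sigma_i\to \sigma'_i$ lies in some $A_{q_i}$, and since $A_{q_i}\subseteq E_{Y_{q_i}}$, both $\sigma_i$ and $\sigma'_i$ belong to $Y_{q_i}$. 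Applying the partition hypothesis to the down edge $\sigma_i\to \sigma'_{i+1}$ gives $q_{i+1}\le q_i$, because $\sigma'_{i+1}\in Y_{q_{i+1}}$ is contained in $\sigma_i\in Y_{q_i}$.

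Chaining these inequalities around the cycle yields $q_0\ge q_1\ge\cdots\ge q_{r-1}\ge q_0$, so all $q_i$ coincide with a single index $q\in Q$, meaning every cell appearing in the cycle lies in $Y_q$. Each reversed edge in the cycle is then in $A_q$, and each down edge connects two cells of $Y_q$ without lying in $A$ (and hence not in $A_q$), so the whole cycle sits inside $G_{Y_q}^{A_q}$, contradicting acyclicity of $A_q$. The main subtlety is ensuring that all matching edges encountered along the cycle live in a common $A_q$ rather than in distinct $A_{q_i}$'s; the partition hypothesis forces precisely this, once one follows the alternation of up-edges and down-edges around the cycle and uses that the cluster index can only weakly decrease along a down-edge while the closure of the cycle forces the starting and ending indices to coincide.
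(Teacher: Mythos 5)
Your proof is correct and complete. Note that the paper itself does not prove this statement: it is quoted as a reformulation of Lemma~4.2 of Jonsson's book \cite{JJ} and used as a black box, so there is no in-paper argument to compare against. The argument you give is the standard one and handles the two genuine points of care correctly: (i) the matching property follows because each $A_q$ has both endpoints of its edges inside $Y_q$ and the $Y_q$ are pairwise disjoint, so distinct blocks cannot compete for a cell; and (ii) for acyclicity, the alternation of up-edges (reversed matching edges, whose endpoints automatically lie in a common block $Y_{q_i}$) with down-edges (to which the order-compatibility hypothesis applies, giving $q_{i+1}\le q_i$) produces a weakly decreasing chain of indices around the cycle, and antisymmetry of the poset order then collapses all indices to a single $q$, trapping the cycle inside $G_{Y_q}^{A_q}$ and contradicting the acyclicity of $A_q$.
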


\begin{definition}\label{d:grading}  With notation as in
    \cref{amatching}, a {\bf grading} on $Y$ is an order-preserving
  function $\gr\colon Y\to P$, where $P$ is a poset, and $Y$ is
  ordered with respect to inclusion. If $Y$ is equipped with a
  grading, we say that an acyclic matching $A$ on $G_Y$ is {\bf
    homogeneous} provided that $\gr(\sigma)=\gr(\sigma')$ for all
  $\sigma\to \sigma'\in A$.
\end{definition}

 If $I$ is a monomial ideal and $X=\taylor(I)$ is the Taylor complex
 of $I$, and $P=\LCM(I)$ is the $\lcm$-lattice of $I$, then we can define
 a grading on $X$ via the function $$\lcm \colon X \to \LCM(I)$$
 where $\lcm(\sigma)=\lcm(m_i\mid i \in \sigma)$.  Note
   that this grading is exactly the monomial labeling of the faces of
   the Taylor complex, as in \cref{e:running}.

\begin{example}\label{e:running3} 
Let $I = (xy, yz, zu)$ be the ideal in \cref{e:running} and let $X$ be the
Taylor complex of $I$. The graph $G_X$ is shown below. The only
homogeneous acyclic matching in $G_X$  under the $\lcm$
  grading is the singleton edge $\{b\}$, since $\lcm(xy, yz, zu)=
\lcm(xy, zu)$.

\begin{equation*}
\xymatrix{ & & & \{xy, yz, zu\} \ar[dlll]_a \ar[d]^b \ar[drrr]^c & &
  \\ \{xy, yz\} \ar[dr]_d \ar[drrr]^e & & & \{xy, zu\} \ar[dll]_f
  \ar[drr]^g & & & \{yz, zu\} \ar[dlll]_h \ar[dl]^l \\ & \{xy\} & &
  \{yz\} & & \{zu\}}
\end{equation*}
\end{example}

Our motivation in what follows is the next statement, which is
a special case of \cite[Proposition~1.2, Theorem~1.3]{BW}.

\begin{theorem}[{\bf Resolutions from acyclic matchings}]\label{t:BW}
  If $I$ is a monomial ideal, $X$ is the Taylor complex of $I$
  graded by the $\lcm$ function, and $A$ is a homogeneous acyclic
  matching of $G_X$, then there is a CW complex $X_A$ which supports a
  multigraded free resolution of $I$. The $i$-cells of $X_A$ are in
  one-to-one correspondence with the $A$-critical $i$-cells of $X$.
  \end{theorem}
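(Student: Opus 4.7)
The plan is to deduce this from the general framework of Batzies and Welker, after checking that the Taylor complex with its $\lcm$-grading fits into their setup. The starting point is Taylor's classical result: the simplicial chain complex of $X=\taylor(I)$, homogenized by labeling each face $\sigma$ with the monomial $\lcm(\sigma)$, is a multigraded (typically non-minimal) free resolution of $I$. So I would begin by recording that $X$ already supports a multigraded free resolution, and then pass to a smaller cell complex $X_A$ via discrete Morse theory.

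The next step is to recall Forman's theorem in the form reinterpreted by Chari: given an acyclic matching $A$ on the face poset of a regular CW complex $X$, there exists a CW complex $X_A$ homotopy equivalent to $X$ whose cells are in bijection with the $A$-critical cells, with dimensions preserved. For the Taylor complex, this produces a CW complex whose $i$-cells correspond precisely to the $A$-critical $i$-cells of $X$, which gives the stated bijection. Because the matching is on the face poset of a simplex, the gradient paths of $A$ in $G_X^A$ are well-defined and induce the attaching maps of $X_A$.

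The crucial step is transferring the multigraded labeling from $X$ to $X_A$ to get a multigraded resolution. Here the homogeneity of $A$ is exactly what is needed: if $\sigma\to\sigma'\in A$, then $\lcm(\sigma)=\lcm(\sigma')$, so along every matched pair the grading is constant, and more generally the $\lcm$ is constant along any gradient path in $G_X^A$. Consequently, assigning each critical cell $\sigma$ the monomial label $\lcm(\sigma)$ gives a well-defined multigraded cellular chain complex on $X_A$, whose boundary maps are computed (following Batzies--Welker) by summing incidence coefficients over gradient paths between critical cells of consecutive dimensions. Since homogenization of this cellular chain complex agrees, up to homotopy equivalence induced by $A$, with the Taylor resolution, it remains exact in positive degrees and presents $I$ in degree zero; thus $X_A$ supports a multigraded free resolution of $I$.

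The main obstacle I would expect is the verification that the algebraic Morse-theoretic collapse respects the multigrading at the level of chain complexes, i.e.\ that the induced boundary maps are still graded of degree zero in the multigrading by $\LCM(I)$. Once one checks that every gradient path from a critical $i$-cell $\sigma$ to a critical $(i-1)$-cell $\sigma'$ can contribute only when $\lcm(\sigma')$ divides $\lcm(\sigma)$, and that homogeneity forces $\lcm(\sigma'')=\lcm(\sigma)$ along all intermediate matched cells $\sigma''$, the compatibility with the multigrading follows, and the result is then a direct application of \cite[Proposition~1.2, Theorem~1.3]{BW}.
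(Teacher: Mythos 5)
Your proposal takes essentially the same route as the paper: the paper does not prove this statement but simply records it as a special case of \cite[Proposition~1.2, Theorem~1.3]{BW}, and your sketch is an unpacking of that citation (Taylor's resolution, Forman/Chari collapse, homogeneity to preserve the multigrading). One small imprecision worth flagging: you assert that ``the $\lcm$ is constant along any gradient path in $G_X^A$'' and that homogeneity forces $\lcm(\sigma'')=\lcm(\sigma)$ for all intermediate cells. That is not quite right --- homogeneity fixes the $\lcm$ only across matched pairs (the up-arrows), while down-arrows $\sigma\to\sigma'$ with $\sigma'\subset\sigma$ give only $\lcm(\sigma')\mid\lcm(\sigma)$, so the label is non-increasing in the divisibility order along a gradient path rather than constant. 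That weaker monotonicity is in fact all that is needed for the Morse differential to be a morphism of multigraded modules, so the conclusion you draw is still correct; just adjust the phrasing to ``non-increasing'' rather than ``constant.''
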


\begin{remark}
In~\cite{BW}, the authors define the notion of acyclic
matchings in the context of CW complexes. Since we will only apply the
results of \cite{BW} in the case of a simplex, we opted to give the
less general definitions presented here, as it is simpler to
work with them.

\end{remark}

\section{Powers of monomial ideals of projective dimension one}\label{s:ppd1}

We now turn our attention to the monomial labelings of the faces of
the Taylor complex of $I^r$ when $I$ is a square-free monomial ideal
of projective dimension~$1$ with a minimal resolution supported on a
tree $G$.  In other words, we investigate the monomials in the $\lcm$-lattice of $I^r$. The definitions and results established here 
provide the technical machinery used to define matchings and show they are homogeneous under the lcm labelings in 
subsequent sections.

In \cref{construct} we detailed the construction of the graph $G$ from
the (ordered) generators $m_1,\ldots,m_q$ of $I$.  In the last step of
\cref{construct}, note that since $F_i$ is a leaf of $\Delta_i$, there
is a free vertex $x \in F_i$ such that $x \notin F_j$ for all
$j<i$. Equivalently, $$x \notin F_i^c \mbox{ and } x\in F_j^c \mbox{
  for all } j<i. $$ Therefore for every $i \in \{1,\ldots,q\}$ there exists
a variable $x \in \{x_1, \ldots, x_n\}$ such that
\begin{align}\label{x}
  x \nmid   m_i \qand x\mid m_j \quad \mbox{ for all } \quad    j<i. 
\end{align}

We also notice that
$$\lcm(m_i,m_j)=\prod_{x_t\in F_i^c\cup F_j^c}x_t.$$ In particular, we
have $m_u\mid \lcm(m_i,m_j)$ if and only if $F_u^c\subseteq F_i^c\cup
F_j^c$, or equivalently, $F_i\cap F_j\subseteq F_u$.  In view of
\cref{construct}, since $F_i \cap F_j \subseteq
  F_{\tau(i)}$ whenever $j<i$, we have
$$\lcm(m_i,m_j, m_{\tau(i)})=\lcm(m_i,m_j)\quad\text{for all $j<i$},$$ and in particular,
\begin{align}\label{r:lcms}
  m_{\tau(i)}\mid \lcm(m_i,m_j)\quad\text{for all $j<i$}.
\end{align}

Let $\{m_1,\ldots,m_q\}$ be the square-free monomial
  generating set for $I$.  If $\ba=(a_1,\ldots,a_q)\in (\NN\cup \{0\})^q $, we set 

$$
|\ba|=a_1+\dots+a_q\quad\text{and}\quad \bma=m_1^{a_1}\cdots
m_q^{a_q}\,.
$$
For each $r\ge 0$, we further set 
$$\cN_r=\{\ba\in (\NN\cup \{0\})^q \mid |\ba| =r\}\quad\text{and}\quad \cM_r=\{\bma \mid \ba \in \cN_r\}\,.$$
Since $I$ is
generated by $m_1,\ldots,m_q$, the ideal $I^r$ is generated by
monomials in $\cM_r$. We show that
if $\pd_S(I)=1$, then $\cM_r$ is a minimal generating set for $I^r$.

\begin{proposition}[{\bf Uniqueness of generators}] \label{same-ems}
Assume $I$ is a square-free monomial ideal with $\pd_S(I)=1$ and let
$\{m_1,\ldots,m_q\}$ be the square-free monomial
  generating set for $I$. Suppose $\ba, \bb \in \cN_r$ for some $r
>0$. Then
$$\bma=\bmb \iff \ba=\bb.$$ In particular, $\cM_r$ forms a minimal
generating set for $I^r$.
\end{proposition}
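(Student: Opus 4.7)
The plan is to prove both assertions by strong induction on $r$, using~\eqref{x} as the main tool. For the base case $r=1$, the minimality of the generating set $\{m_1,\ldots,m_q\}$ of $I$ gives both the uniqueness part (distinct indices yield distinct monomials) and the minimality part (no $m_i$ divides any other $m_j$).

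For the inductive step, let $k=\max\{i: a_i\neq 0 \text{ or } b_i\neq 0\}$ and apply~\eqref{x} at $k$ to obtain a variable $\xi$ with $v_\xi(m_k)=0$ and $v_\xi(m_j)=1$ for all $j<k$. Since $a_j=b_j=0$ for $j>k$, this yields
\[
v_\xi(\bma)=r-a_k,\qquad v_\xi(\bmb)=r-b_k.
\]
For the uniqueness direction, $\bma=\bmb$ forces $a_k=b_k$; after canceling $m_k^{a_k}$ from both sides I apply the inductive hypothesis to the resulting vectors in $\cN_{r-a_k}$. For the minimality assertion, which requires showing that $\bmb\mid\bma$ with $\ba,\bb\in\cN_r$ forces $\ba=\bb$, if $\operatorname{supp}(\ba)\cap\operatorname{supp}(\bb)$ contains some $i$ I cancel one factor of $m_i$ from each side and use the inductive hypothesis in $\cN_{r-1}$. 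Otherwise the supports are disjoint, and with $k=\max(\operatorname{supp}(\ba)\cup\operatorname{supp}(\bb))$, the case $k\in\operatorname{supp}(\ba)$ gives an immediate contradiction via the computation above, since $v_\xi(\bmb)=r>r-a_k=v_\xi(\bma)$.

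The main obstacle is the remaining sub-case $k\in\operatorname{supp}(\bb)$ with $a_k=0<b_k$, where the identity at $k$ yields only the trivially-true inequality $r-b_k\le r$. To dispatch it I plan to exploit the global tree structure. For each variable $x_t$, the set $R_t=\{j:x_t\in F_j\}$ is a connected subtree of the graph $G$ from~\cref{construct}: whenever $x_t\in F_{j_1}\cap F_{j_2}$ with $j_1<j_2$, the joint property gives $x_t\in F_{\tau(j_2)}$, and iterating shows $R_t$ contains the full tree-path between any two of its vertices. Using the identity $v_{x_t}(\bm^{\mathbf{c}})=|\mathbf{c}|-\sum_{j\in R_t}c_j$, the divisibility $\bmb\mid\bma$ becomes the system of subtree inequalities $\sum_{j\in R_t}(b_j-a_j)\ge 0$ as $t$ ranges over variables, while $|\ba|=|\bb|=r$ gives $\sum_j(b_j-a_j)=0$. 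Since leaves of $\Delta$ supply singleton subtrees (so that $b_j\ge a_j$ holds directly for the corresponding indices), and since every edge of $G$ is realized by some $R_t$ (because connectedness of the quasi-tree $\mathcal F(I)^c$ forces $F_i\cap F_{\tau(i)}\neq\varnothing$ for $i\ge 2$), a traversal from the leaves of $G$ inward combined with the zero-sum condition forces $b_j=a_j$ for every $j$, completing the induction.
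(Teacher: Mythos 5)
Your proof of the biconditional is correct and rests on the same tool as the paper's, namely the free-vertex property recorded in~\eqref{x}: you apply it at the largest index $k$ with $a_k\neq 0$ or $b_k\neq 0$ and count the multiplicity of the resulting variable. The paper organizes this as a direct contradiction after first cancelling $\bm^{\ba\cap\bb}$ to get disjoint supports and invoking ``WLOG $a_k\neq 0$'' (legitimate, since $\bma=\bmb$ is symmetric), whereas you run an induction on $r$; these are interchangeable. So for the equivalence $\bma=\bmb\iff\ba=\bb$ you are essentially reproducing the paper's argument.

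Where you go beyond the paper is in recognizing that the ``in particular'' clause — that $\cM_r$ is a \emph{minimal} generating set — is not a one-line corollary of the bijection: one has to rule out a proper divisibility $\bmb\mid\bma$, and the one-variable multiplicity count genuinely fails in the sub-case where, after cancelling common factors, the maximal nonzero index lies in $\supp(\bb)$. This is a real observation; the paper's proof only treats the symmetric (equality) situation, and the asymmetric divisibility needs a separate argument. Your subtree idea is the right structural insight: the verification that each $R_t=\{j:x_t\in F_j\}$ is connected in the tree $G$ (via $F_{j_1}\cap F_{j_2}\subseteq F_{\tau(j_2)}$) is correct and useful, and the reformulation of $\bmb\mid\bma$ as the system $\sum_{j\in R_t}(b_j-a_j)\ge 0$ together with $\sum_j(b_j-a_j)=0$ is sound.

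However, the final ``traversal from the leaves inward'' step is not an argument, and the one concrete premise you offer to support it is false. \cref{t:FH} only gives that $\F(I)^c$ is a quasi-\emph{forest}, not a quasi-tree, so $F_i\cap F_{\tau(i)}$ can certainly be empty (e.g.\ $I=(xy,xz)$ has $F_1=\{z\}$, $F_2=\{y\}$ disjoint, yet the proposition still holds there), and even when it is nonempty the corresponding $R_t$ need not equal the edge $\{i,\tau(i)\}$ — it is merely a subtree containing it. Likewise ``leaves of $\Delta$'' should be ``leaves of the tree $G$'': a leaf of $G$ does give a singleton $R_t$ via the free vertex of $F_j$ in $\Delta_j$, but not every facet is a leaf of $\Delta$ itself. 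The inequalities you list do in fact force $d_j=b_j-a_j=0$, but the deduction requires more than leaves and edges; one workable route is to induct on $q$: the free vertex of $F_q$ gives $d_q\ge 0$, contracting $q$ into $p=\tau(q)$ (set $\tilde d_p=d_p+d_q$) preserves all subtree inequalities for $\langle F_1,\dots,F_{q-1}\rangle$, the inductive hypothesis then yields $\tilde d_j=0$ for $j<q$, and a vertex of $F_p\ssm F_q$ (nonempty since both are facets) supplies $d_p\ge 0$, whence $d_p=d_q=0$. As written, your proposal leaves this last step as a gap.
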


\begin{proof} 
One direction of the statement is clear. For the other direction,
suppose without loss of generality the generators
  $m_1,\ldots,m_q$ are ordered as in \cref{construct}, $$\ba = (a_1,
\ldots, a_q) \neq \bb = (b_1, \ldots, b_q) \qand \bma=\bmb.$$ If we
set $$\ba\cap\bb=(\min(a_1,b_1),\ldots, \min(a_q,b_q)),$$ then by
cancelling $\bm^{\ba\cap\bb}$ from both sides of the equation
$\bma=\bmb$, we can assume without loss of generality that for all
$1 \leq j\leq q$, if $a_j \neq 0$ then $b_j=0$ and vice versa.

Let $k=\max\{j \mid a_j\neq 0 \mbox{ or } b_j \neq 0\}$, and suppose
without loss of generality that $a_k\neq 0$.  In particular, it follows
that $$b_j=0 \mbox{ for all } j \geq k.$$  

By \eqref{x}, there exists $x\in \{x_1, \dots, x_n\}$ such that
$$x \nmid m_k \mbox{ and } x\mid m_j \mbox{ for all } j<k.$$ So $x
\mid m_j$ for every $j$ such that $b_j \neq 0$, and therefore
$x^{|\bb|} \mid \bmb$. Since $|\ba|=|\bb|=r$, this implies that
$x^{|\ba|} \mid \bma$. But since $a_k \neq 0$ 
  and each $m_i$ is square-free then $x \mid m_k$, which is impossible.
\end{proof}

The definitions below will be used in the sequel.  It may
  be helpful to the reader to consider \cref{e:running2}, which
  illustrates some of the concepts.

\begin{definition}\label{d:generators}
   Let $I$ be a square-free monomial ideal of projective
    dimension $1$ and let $\{m_1,\ldots,m_q\}$ be the square-free
    monomial generators of $I$ ordered as in
    \cref{construct}. Suppose $\ba, \bb \in \cN_r$, where
$$\ba = (a_1, \ldots, a_q) \qand \bb = (b_1, \ldots, b_q).$$

\begin{itemize}
\item The {\bf support} of $\bma$, or of $\ba$, is the
  set $$\supp(\bma)=\supp(\ba)=\{j \mid a_j \neq 0\}
  \subseteq \{1, 2, \dots, q\}.$$

   \item Define $\prec$ by $\bmb \prec \bma$ or $\bb \prec
     \ba$ if  $b_j<a_j$ for the largest $j$ such that $b_j \neq a_j$.

   \item Define $\dom (\bma,\bmb)$ or
     $\dom (\ba,\bb)$ to be the largest index where $\ba$ and $\bb$
     differ when $\ba \not= \bb$.  In other words,
 $$\dom(\bma,\bmb)=\dom(\ba,\bb)=\max\{j \mid a_j
 \neq b_j\}.$$
 If $\ba=\bb$, set $\dom(\bma,\bmb)=\dom(\ba,\bb)= -\infty$.\\
 Since $|\ba|=|\bb|$, we note that $\dom(\ba,\bb)>1$ when $\ba\not= \bb$.
  \item Recall that a function $\tau\colon\{1, 2, \dots, q\}\to\{1, 2, \dots, q\}$ satisfying
    $\tau(1)=1$ and $\tau(i)<i$ for $i>1$ is defined in
    \cref{construct}.  If $j \in \supp(\ba)$,  and
      $\mathbf{e}_1, \ldots, \mathbf{e}_q$ denote the standard basis
      vectors for $\mathbb{R}^q$, we set
$$
\pi_j(\ba)=\ba + \mathbf{e}_{\tau(j)}- \mathbf{e}_j \qand \pi_j(\bma)=\bm^{\pi_j(\ba)}=\frac{\bma \cdot
    m_{\tau(j)}}{m_{j}}$$
  and we set
  \begin{align}\label{d:pi-notation}
    \bpi(\bma) =\{\pi_j(\bma)\mid j \in \supp(\bma)\} \cup \{\bma\}.
  \end{align}
  
\end{itemize}
\end{definition}

 Note that if $j\ne j'$, then $\pi_j(\ba) \ne \pi_{j'}(\ba)$ and therefore
$\pi_j(\bma)$ and $\pi_{j'}(\bma)$ are distinct.

\begin{example}\label{e:running2} If $I=(xy,yz,zu)$ is the ideal in
\cref{e:running}, then we label our generators as $$m_1=xy \quad
m_2=yz \quad m_3=zu.$$  The generators
of $I^2$ are uniquely written as $\bma$ where $\ba \in \cN_2$. The
monomial $m = xyzu \in I^2$, for example, corresponds uniquely to the
vector $(1,0,1)$.

Following the notation in \cref{e:running1}, since the joint of $F_3$
in $\Delta$ is $F_2$, we have $\tau(3)=2$ and thus,
\begin{align*}
\bpi(xyzu)= &\bpi(\bm^{(1,0,1)})
= \{ \bm^{(1,0,1)}, \bm^{(1,1,0)} \}
= \{m_1m_3, m_1m_2 \}
= \{xyzu, xy^2z\}.
\end{align*}
\end{example}  

 We close this section with two technical results which
  are necessary for our main result later in the paper.

\begin{lemma}\label{l:bpi} If $\ba \in \cN_r$,
 and  $1 <j<k \in \supp(\ba)$, then  $$\pi_k(\ba) \prec
 \pi_{j} (\ba) \prec \ba.$$
Moreover $\dom(\pi_j(\ba), \pi_k(\ba))=k$ and
  $\dom( \ba, \pi_j(\ba))=j$. 
\end{lemma}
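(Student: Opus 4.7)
The plan is to unwind the definitions of $\pi_j$, $\pi_k$, $\dom$, and $\prec$, and verify the claims one coordinate at a time, using crucially the property $\tau(i)<i$ for $i>1$ from \cref{construct}. Since $1<j<k$, we have $\tau(j)<j$ and $\tau(k)<k$, and it is this inequality that allows us to pinpoint the largest coordinate at which the relevant vectors disagree.

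First I would handle $\pi_j(\ba)\prec\ba$ together with $\dom(\ba,\pi_j(\ba))=j$. By \cref{d:generators}, $\pi_j(\ba)=\ba+\mathbf{e}_{\tau(j)}-\mathbf{e}_j$, so $\pi_j(\ba)$ and $\ba$ agree at every coordinate except possibly $\tau(j)$ and $j$. Because $\tau(j)<j$, the largest index where they can differ is $j$; they genuinely differ there since $j\in \supp(\ba)$ forces $(\pi_j(\ba))_j=a_j-1\ne a_j$. Hence $\dom(\ba,\pi_j(\ba))=j$ and, as $(\pi_j(\ba))_j<a_j$, the relation $\pi_j(\ba)\prec \ba$ follows immediately from the definition of $\prec$.

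Next, for $\pi_k(\ba)\prec \pi_j(\ba)$ and $\dom(\pi_j(\ba),\pi_k(\ba))=k$, I would compute
\[
\pi_j(\ba)-\pi_k(\ba)=\mathbf{e}_{\tau(j)}-\mathbf{e}_j-\mathbf{e}_{\tau(k)}+\mathbf{e}_k,
\]
so any coordinate at which the two vectors differ lies in $\{\tau(j),j,\tau(k),k\}$. Using $\tau(j)<j<k$ and $\tau(k)<k$, the largest element of this set is $k$. Moreover, since $k\ne j$, $k\ne \tau(j)$ (as $\tau(j)<j<k$), and the coefficient of $\mathbf{e}_k$ in the difference is $+1$ (coming solely from the $-\mathbf{e}_k$ in $\pi_k(\ba)$), we get $(\pi_j(\ba))_k-(\pi_k(\ba))_k=1>0$. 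Therefore $\dom(\pi_j(\ba),\pi_k(\ba))=k$, and the strict inequality at this maximal differing coordinate yields $\pi_k(\ba)\prec \pi_j(\ba)$.

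The main potential obstacle is to be sure that no accidental cancellations occur among the four basis vectors $\mathbf{e}_{\tau(j)},\mathbf{e}_j,\mathbf{e}_{\tau(k)},\mathbf{e}_k$ in a way that would eliminate the contribution at index $k$; this is ruled out by the inequalities $\tau(j)<j<k$ and $\tau(k)<k$, which prevent any of the other three indices from coinciding with $k$. Once this is observed, the entire statement reduces to the bookkeeping above.
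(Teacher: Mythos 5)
Your proof is correct and follows essentially the same route as the paper: a coordinate-by-coordinate comparison that uses $\tau(j)<j<k$ and $\tau(k)<k$ to locate the largest index where the vectors differ and check the sign there. Your write-up is, if anything, slightly more explicit than the paper's about ruling out cancellations among $\mathbf{e}_{\tau(j)},\mathbf{e}_j,\mathbf{e}_{\tau(k)},\mathbf{e}_k$.
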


 \begin{proof} Suppose $\ba=(a_1,\ldots,a_q)$,
  $\pi_j(\ba)=(b_1,\ldots,b_q)$, and $\pi_k(\ba)=(c_1,\ldots,c_q)$.
   We have
  $$b_i=a_i \mbox{ if } i >j; \, b_i=a_i=c_i \mbox{ if } i>k; \mbox{ and } c_k=a_k-1 < a_k=b_k \mbox{
    and } b_j=a_j-1 < a_j$$ which settles our claim.
 \end{proof}

\begin{proposition}\label{mainprop}
Let $\bma, \bmb \in \cM_r$ such that $\bb \prec \ba$. If
$k=\dom (\bma,\bmb)$ then
$$\lcm(\bma, \bmb,\pi_k(\bma))=\lcm(\bma, \bmb).$$
\end{proposition}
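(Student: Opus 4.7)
The plan is to reduce the claim to a pointwise check on monomial exponents. Since one inclusion is automatic, it suffices to prove $\pi_k(\bma) \mid \lcm(\bma, \bmb)$, i.e., that for every variable $x$ the exponent of $x$ in $\pi_k(\bma)$ does not exceed $\max(e_a, e_b)$, where $e_a$ and $e_b$ denote the exponents of $x$ in $\bma$ and $\bmb$, respectively. Note that $k>1$ (as observed in \cref{d:generators}, since $|\ba|=|\bb|$ and $\ba\neq\bb$), so $\tau(k)<k$ is well-defined, and $a_k>b_k\ge 0$ guarantees $k\in\supp(\bma)$ so that $\pi_k(\bma)$ makes sense.

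Because $I$ is square-free, for each variable $x$ and each index $j$ there is an indicator $v_j \in \{0,1\}$ with $v_j = 1$ iff $x \mid m_j$. Then $e_a = \sum_j a_j v_j$, $e_b = \sum_j b_j v_j$, and the identity $\pi_k(\bma)=\bma\cdot m_{\tau(k)}/m_k$ yields that the exponent of $x$ in $\pi_k(\bma)$ equals $e_a + v_{\tau(k)} - v_k$. If $v_k=1$ or $v_{\tau(k)}=0$, this expression is at most $e_a$ and we are done. The only nontrivial case is $v_{\tau(k)}=1$ and $v_k=0$; there I will need to show $e_b\ge e_a+1$.

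In that case, $x\mid m_{\tau(k)}$ and $x\nmid m_k$, so the divisibility $m_{\tau(k)}\mid \lcm(m_k,m_j)$ from \eqref{r:lcms}, valid for every $j<k$, forces $x\mid m_j$, i.e., $v_j=1$, for all $j<k$. Using that $a_j=b_j$ for $j>k$ (the defining property of $k=\dom(\bma,\bmb)$) and that $v_k=0$, I compute
\[
  e_b - e_a \;=\; \sum_{j}(b_j-a_j)\,v_j \;=\; \sum_{j<k}(b_j-a_j).
\]
The constraint $|\ba|=|\bb|$ gives $\sum_j(b_j-a_j)=0$, and combined again with $a_j=b_j$ for $j>k$ this yields $\sum_{j<k}(b_j-a_j)=a_k-b_k$. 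Finally, $\bb\prec\ba$ means precisely $b_k<a_k$ at this top differing index, so $a_k-b_k\ge 1$, giving $e_b\ge e_a+1$ as required.

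The main obstacle is the single edge case $v_{\tau(k)}=1$, $v_k=0$ where $\pi_k(\bma)$ could in principle introduce a new power of $x$; everything else is direct bookkeeping with multi-exponents. The key tool that unlocks this case is the divisibility relation \eqref{r:lcms}, which leverages the quasi-tree structure underlying the order of the generators to propagate each variable of $m_{\tau(k)}$ that is missing from $m_k$ across all earlier generators $m_j$.
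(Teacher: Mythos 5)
Your proof is correct and follows essentially the same route as the paper's: reduce to $\pi_k(\bma)\mid\lcm(\bma,\bmb)$, isolate the only nontrivial case $x\mid m_{\tau(k)}$, $x\nmid m_k$, use \eqref{r:lcms} to get $x\mid m_j$ for all $j<k$, and then exploit $|\ba|=|\bb|$ together with $a_j=b_j$ for $j>k$ and $a_k>b_k$. Your indicator-variable bookkeeping even yields the exact identity $e_b-e_a=a_k-b_k$ where the paper argues by contradiction, but this is only a cosmetic difference.
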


\begin{proof}
 The inequality $\bb = (b_1, \ldots, b_q) \prec (a_1, \ldots,
a_q) = \ba$ implies that $$a_k > b_k,
\ a_{k+1}=b_{k+1},\ \ldots,\ a_q=b_q.$$

We must show $$\pi_k(\bma)\mid \lcm(\bma, \bmb).$$ In particular, we need to show
that for every variable $x$ and any integer $s$ such that $x^s\mid
\pi_k(\bma)$,  either $x^s\mid \bma$ or $x^s\mid \bmb$.

 If $x$ does not divide $\m_{\tau(k)}$ and $x^s\mid \pi_k(\bma)$ we see from
 the equality $$\pi_k(\bma)m_k=\bma m_{\tau(k)}$$ that $x^s\mid \bma$. 

Assume now $x\mid m_{\tau(k)}$ and $s$ is maximal such that $x^s\mid
\pi_k(\bma)$.  If $x\mid m_k$, then $x^{s+1} \mid \pi_k(\bma)m_k$,
hence $x^{s+1}\mid \bma m_{\tau(k)}$. Since $m_{\tau(k)}$ is
square-free, we conclude that $ x^s \mid \bma$.  Assume now that $x$
does not divide $m_k$. By \eqref{r:lcms} , $m_{\tau(k)} \mid
\lcm(m_k,m_j)$ for all $j<k$. We conclude that $x \mid m_j$ for all
$j<k$. We claim that $x^{s}\mid \bmb$ in this case. 

Let $t$ denote the largest integer such that $x^{t}\mid
m_{k+1}^{a_{k+1}}\dots m_q^{a_q}$. As noted in \cref{d:generators}, necessarily $k>1$, and hence $\tau(k)<k$,  which implies that $m_{\tau(k)}\in
\{m_1, \dots, m_{k-1}\}$. Since $x$ does not divide $m_k$ but it
divides $m_1, \dots, m_{k-1}$ and all these monomials are square-free,
we have
$$ s=t+\sum_{i=1}^{k-1}a_i+1\,.
$$
Further, since $a_i=b_i$ for all $i>k$, we see that the largest integer
$s'$ such that $x^{s'}\mid \bmb$ is
$$ s'=t+\sum_{i=1}^{k-1}b_i\,.$$

To show $x^s\mid \bmb$ we need to show $s\le s'$. Assuming $s'<s$, we
have 
$$\sum_{i=1}^{k}b_i=\sum_{i=1}^{k-1}b_i+b_k \le
\sum_{i=1}^{k-1}a_i+b_k<\sum_{i=1}^{k-1}a_i+a_k=\sum_{i=1}^{k}a_i$$
The strict inequality above contradicts the fact that $|\ba|=|\bb|=r$.
\end{proof}

\section{$I^r$ has a resolution supported on a CW complex}\label{mainsection}

This section contains our main result. Recall that our setting is a
polynomial ring $S=\kk[x_1, \ldots, x_n]$ and an ideal $I$  with $\pd_S(I) =1$ generated by square-free monomials $\{m_1,\ldots, m_q\}$ in
$S$, ordered as in \cref{construct}. Given a positive integer $r$, we
denoted the generating set of $I^r$ by $\cM_r$, and showed that each element of
$\cM_r$ can be uniquely written as $\bma=m_1^{a_1}\cdots m_q^{a_q}$
where $\ba=(a_1,\ldots,a_q) \in (\NN \cup\{0\})^q$ is such that
$|\ba|=a_1+\cdots+a_q=r$ (\cref{same-ems}), in other words $\ba \in
\cN_r$. Let $X$ be the Taylor complex of $I^r$, with the $\lcm$ as the
grading function. Recall that $G_X$ denotes the directed graph on the set of faces of $X$, with edge set $E_X$; see \cref{amatching}.

Before stating our main theorem, we extend two functions, which appeared in \cref{d:generators}, to all faces
of $X$.

 \begin{definition} 
 \label{d:pi-cell}
   For $\sigma \in X$,
     $\sigma \neq \varnothing$, let ${\max}_\prec \sigma$ denote the
   largest monomial label of the vertices of $\sigma$ with respect to
   the order defined in \cref{d:generators}. Then if
   ${\max}_\prec \sigma = \bma$, define
$$
\dom(\sigma)= \begin{cases} 
	-\infty &  \sigma \subseteq \bpi(\bma) \\
	\max\{\dom(\bma,\bmb) \mid \bmb \in \sigma \setminus \bpi(\bma)\} & \sigma \not\subseteq \bpi(\bma).
	\end{cases}
 $$
 When $\dom(\sigma)\ne -\infty$ , we set 
$$\pi(\sigma)= \pi_{\dom(\sigma)}(\bma).$$
\end{definition}

Note that $\pi(\sigma)$ may not be in $\sigma$. Also, 
if $\ba \neq \bb$ and $\bma, \bmb \in I^r$, then
\begin{itemize}
\item  $1<\dom(\bma,\bmb)\le q$;
\item  (see also \cref{l:bpi}) for every $\bma \in
   \cM_r$ and $k \in \supp(\ba)$, $$\dom(\bma,\pi_k(\bma))=k.$$
 \end{itemize}

   \begin{example}\label{e:running4} In  our running 
     example $I =(xy, yz, zu)$, label the ordered generators as
     $m_1=xy$, $m_2=yz$ and $m_3=zu$. In view of Proposition \ref{same-ems}, we use the monomial labels of the vertices of the Taylor complex to represent these vertices. 
     If
     $$\sigma=\{xyzu, x^2y^2, y^2z^2,xy^2z\}=\{
     \bm^{(1,0,1)},\bm^{(2,0,0)},\bm^{(0,2,0)},\bm^{(1,1,0)}\},$$ which is a face of the Taylor complex of $I^2$, then
     $\max_\prec \sigma=\bm^{(1,0,1)}=xyzu$, and clearly
     $$\sigma \not\subseteq \bpi(\bm^{(1,0,1)})=\{\bm^{(1,0,1)},
     \bm^{(1,1,0)}\}$$ as calculated in \cref{e:running2}. So we have
     $$\sigma \ssm \bpi(\bm^{(1,0,1)}) =\{\bm^{(2,0,0)},\bm^{(0,2,0)}
     \}$$ and
     $$\dom(\bm^{(1,0,1)}, \bm^{(2,0,0)})=\dom(\bm^{(1,0,1)},
     \bm^{(0,2,0)})=3,$$ which results in $\dom(\sigma)=3$. Finally
     $\pi(\sigma)= \pi_3(\bm^{(1,0,1)})=\bm^{(1,1,0)}$, see \cref{e:running1}.
   \end{example}

 The main theorem of this section defines a matching $A$ and its critical cells
 using the sets $\bpi(\bma)$ for $\bma\in I^r$. To easily find the 
 non-empty critical cells of this matching, for each $\bma \in I^r$, identify
 all subsets of $\bpi(\bma)$ that contain $\bma$. These will be the
 critical cells. The remaining cells are matched in pairs that have
 the same largest monomial label. These pairs are  described using the  invariants in \cref{d:pi-cell}, which we note are dependent on the tree structure in \cref{construct}.  When confusion is not
 likely, for $\sigma \in X$ we will use the vertices of $\sigma$ and
 the labels of those vertices interchangeably.

\begin{theorem}[{\bf Main Theorem}]\label{mainresult} 
Let $I= (m_1, \dots, m_q)$ be a square-free monomial ideal, with $q$
generators, of projective dimension one in $S=\kk[x_1, \dots, x_n]$.
Let $r \in \NN$ and let $X$ be the Taylor complex of $I^r$. Let $A$ be
the following subset of $E_X$ 

\begin{equation}\label{d:A}
A=\Big\{\sigma\to (\sigma\setminus \{\pi(\sigma)\}) \mid \sigma \neq
\varnothing, \dom(\sigma)\ne -\infty, \pi(\sigma) \in
\sigma \Big\}.
\end{equation}

Then:
\begin{enumerate}[\quad\rm(1)]
\item The set $A$ is a homogeneous acyclic matching of $G_X$;
\item The set of critical cells for the matching $A$ consists of
 $$
 \{\varnothing \} \cup \bigcup_{\bma\in \cM_r}\big\{\sigma \cup \{\bma\}\mid \sigma \subseteq \bpi(\bma)\big\}\,;
 $$

\item There is a CW complex $X_A$  that supports a free
  resolution of $I^r$, and whose $i$-cells are in one-to-one
  correspondence with the $A$-critical $i$-cells of $X$.
\end{enumerate}
\end{theorem}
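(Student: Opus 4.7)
The proof will address the three parts of the theorem in order, using the Cluster Lemma (\cref{clusterlemma}), the matching lemma (\cref{matching-lemma}), \cref{mainprop}, and the Batzies--Welker theorem (\cref{t:BW}).

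For part (1), the plan is to partition the faces of $X$ by the value of ${\max}_\prec \sigma$ and $\dom(\sigma)$. Setting $Y_{\bma}^k = \{\sigma \in X \mid {\max}_\prec \sigma = \bma,\ \dom(\sigma)=k\}$, I first check that each edge $\sigma \to \sigma\setminus\{\pi(\sigma)\}$ of $A$ stays inside the block $Y_{\bma}^k$ of its source: since $\pi(\sigma)=\pi_k(\bma)\prec \bma$ by \cref{l:bpi}, ${\max}_\prec$ is unchanged, and since $\pi_k(\bma)\in\bpi(\bma)$, the set $\sigma\setminus \bpi(\bma)$ is unchanged, so $\dom$ is unchanged. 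Consequently the restriction of $A$ to $Y_{\bma}^k$ coincides with the acyclic matching $A_{Y_{\bma}^k}^{\pi_k(\bma)}$ produced by \cref{matching-lemma} when $k\ne -\infty$, and is empty when $k=-\infty$. I then verify the hypothesis of the Cluster Lemma: if $\sigma'\subseteq \sigma$, then ${\max}_\prec \sigma' \preceq {\max}_\prec \sigma$, and whenever these are equal the inclusion $\sigma'\setminus\bpi(\bma)\subseteq \sigma\setminus\bpi(\bma)$ forces $\dom(\sigma')\le \dom(\sigma)$. Homogeneity of $A$ under the $\lcm$ grading follows from \cref{mainprop}: for $\sigma\to \sigma\setminus\{\pi_k(\bma)\}$ in $A$, the definition of $\dom$ supplies some $\bmb\in \sigma\setminus\bpi(\bma)$ with $\dom(\bma,\bmb)=k$ and $\bmb\prec\bma$, and \cref{mainprop} yields $\pi_k(\bma)\mid \lcm(\bma,\bmb)\mid \lcm(\sigma\setminus\{\pi_k(\bma)\})$.

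For part (2), the plan is a short case analysis identifying the cells that touch no $A$-edge. If $\dom(\sigma)=-\infty$, so $\sigma\subseteq \bpi(\bma)$ for $\bma={\max}_\prec\sigma$, then no edge of $A$ originates at $\sigma$; any putative up-edge $\sigma\cup\{v\}\to \sigma$ would require $\pi(\sigma\cup\{v\})=v$, which a case split on whether $v\prec \bma$, $v=\bma$, or $v\succ \bma$ rules out (using that $\pi_k(\cdot)\ne \cdot$ for any $k>1$ and that $\pi(\sigma\cup\{v\})$ always lies in $\bpi({\max}_\prec(\sigma\cup\{v\}))$). Conversely, if $\dom(\sigma)=k\ne -\infty$ then either $\pi_k(\bma)\in\sigma$ (giving a down-edge of $A$) or $\pi_k(\bma)\notin \sigma$, in which case $\sigma\cup\{\pi_k(\bma)\}\to \sigma$ lies in $A$ (the enlarged cell has the same $\bma$ and $k$). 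The critical cells are therefore $\{\varnothing\}$ together with the non-empty subsets of $\bpi(\bma)$ containing $\bma$, matching the claimed description. Part (3) then follows immediately from \cref{t:BW} applied to the homogeneous acyclic matching $A$.

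The main obstacle I anticipate is the bookkeeping in part (1): cleanly verifying that $A$-edges preserve both ${\max}_\prec$ and $\dom$ so that the Cluster Lemma partition interacts properly with the matching and \cref{matching-lemma} applies block by block. The case analysis of potential up-edges in part (2) is the other delicate spot, particularly the subcase where the new vertex $v$ added to $\sigma$ exceeds ${\max}_\prec \sigma$, so that the whole invariant data $(\bma,k,\bpi(\bma))$ of the enlarged cell must be recomputed.
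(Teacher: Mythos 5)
Your proposal is correct and follows essentially the same line as the paper: partition the nonempty faces by the pair $(\max_\prec\sigma, \dom\sigma)$, observe that $A$ restricted to each block is the elementary matching of \cref{matching-lemma} on the vertex $\pi_k(\bma)$, glue via the Cluster Lemma, use \cref{mainprop} for homogeneity, and invoke \cref{t:BW}. The only cosmetic differences are that you apply the Cluster Lemma once with a lexicographic poset on $(\bma,k)$ where the paper applies it twice (first over $k$ within each $X_{\bma}$, then over $\bma$), and for part (2) you do a direct case analysis of potential up- and down-edges at a cell, whereas the paper gets the critical cells for free from the observation that each block matching $A_{\bma,k}$ has no critical cells at all.
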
 

The proof will be given in a series of steps.  For an illustration of the notation and concepts used in the proof, see
\cref{shadedtriangle}, at the end of the section, which details many of
the constructions herein.

\begin{proof}  
The proof consists of creating and refining a partition on the Taylor complex $X$ associated to $I^r$, and then describing carefully crafted matchings on each of the sets in the refined partition, which are then combined to produce the desired matching.

\noindent{\bf Step 1.} We construct a partition on $X$.

\noindent For each $\bma\in \cM_r$, set 
 $$X_{\bma} =\{\sigma \in X\mid {\max}_\prec \sigma=\bma\}.$$
 By \cref{l:bpi}, we have $\bpi(\bma) \in X_{\bma}$.
The sets $X_{\bma}$, together with $\{\varnothing\}$, form a partition of $X$. 
Notice that for $\bma, \bmb\in \cM_r$, $\sigma \in X_{\bma}$, and $\sigma'\in X_{\bmb}$ we have: 
\begin{equation}
\label{X-partition}
\sigma' \subseteq \sigma\Rightarrow \bmb\preceq \bma\,.
\end{equation}
To see this, suppose $\sigma'\subseteq \sigma$ for $\sigma \in X_{\bma}$ and $\sigma'\in X_{\bmb}$.  We have $\bmb \in \sigma' \subseteq \sigma$, hence  $\bmb \in \sigma$. Since $\sigma \in X_{\bma}$ and $\bmb \in \sigma$, we then have $\bmb\preceq \bma$. 

\medskip

\noindent{\bf Step 2.} Let $\bma\in \cM_r$. We construct a
partition of $X_{\bma}$. For $k\in \{2, \dots, q\}$ set  
\begin{align*}
Y_{\bma,k} = &\{\sigma \in X_{\bma}\mid \dom(\sigma)=k\}, \quad {\text{ and set }}\\
Y_{\bma,-\infty} = &\{\sigma \in X_{\bma}\mid \dom(\sigma)=-\infty\}
= \{\sigma\in X_{\bma}\mid \sigma\subseteq \bpi(\bma)\}\,.
\end{align*}
Note that $Y_{\bma,k}=\varnothing$ is possible. Also if $k\ne k'$,
then $Y_{\bma,k}$ and $Y_{\bma,k'}$ are disjoint. 
We have thus a partition of $X_{\bma}$ that consists of the
sets $Y_{\bma,k}$ that are not empty.  In
particular, we have
\begin{equation}
\label{Y-remain}
\bigcup_{k=2}^q Y_{\bma,k}=X_{\bma} \ssm Y_{\bma, -\infty}.
\end{equation}
Assume $\sigma \in Y_{\bma,k}$ and
$\sigma'\in Y_{\bma,k'}$ such that $\sigma'\subseteq \sigma$. By
definition, $\dom(\sigma)=k$ and $\dom(\sigma')=k'$. In particular, for
some $\bmb\in \sigma' \ssm \bpi(\bma)$, $\dom(\bma,\bmb)=k'$. Since
$\sigma' \subseteq \sigma$, we also have $\bmb \in \sigma \ssm
\bpi(\bma)$, and hence $k'\le \dom(\sigma)=k$. Thus for $k,k'\in \{2, \dots, q\}$, $\sigma \in Y_{\bma,k}$, and $\sigma'\in Y_{\bma,k'}$ we have 
\begin{equation}\label{Y-partition}
\sigma' \subseteq \sigma \Rightarrow  k'\le k.
\end{equation}

\medskip

\noindent{\bf Step 3.} Let $k\in \{2, \dots, n\}$ and $\bma\in
\cM_r$. We construct a homogeneous acyclic matching 
  (\cref{d:grading}) on $G_{Y_{\bma,k}}$ when $Y_{\bma,k}\neq
\varnothing$.

 If $Y_{\bma,k}\neq \varnothing$, we define a subset of $ E_{Y_{\bma,
     k}}$ as follows:
\begin{equation} \label{A}
\begin{aligned}
A_{\bma,k}=\Big\{ & \sigma\to \sigma'  \mid \sigma \in Y_{\bma,k}, \pi_k(\bma) \in \sigma, \sigma'=
\sigma \ssm \{\pi_k(\bma)\} \Big\}\,.
\end{aligned}
\end{equation}
\begin{claim} $A_{\bma,k} $ is a  homogeneous acyclic matching on $G_{Y_{\bma,k}}$ that
has no critical cells.
\end{claim}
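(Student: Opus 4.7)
The plan is to verify four properties of $A_{\bma,k}$: (a) it is a well-defined subset of $E_{Y_{\bma,k}}$, i.e., both endpoints of each edge lie in $Y_{\bma,k}$; (b) every cell of $Y_{\bma,k}$ appears in exactly one edge of $A_{\bma,k}$ (so there are no critical cells); (c) the matching is homogeneous with respect to the $\lcm$ grading; (d) the matching is acyclic.

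The crux of both (a) and (b) is to show that for any $\sigma\in Y_{\bma,k}$, toggling the membership of $\pi_k(\bma)$ in $\sigma$ produces another element of $Y_{\bma,k}$. By \cref{l:bpi} we have $\pi_k(\bma)\prec \bma$, so $\bma$ remains the $\prec$-maximum whether or not $\pi_k(\bma)$ is included; in particular, $\max_\prec$ is preserved. For the value of $\dom$, the hypothesis $\dom(\sigma)=k$ supplies a witness $\bmb\in \sigma\ssm \bpi(\bma)$ with $\dom(\bma,\bmb)=k$. Since $\pi_k(\bma)\in \bpi(\bma)$, this witness is distinct from $\pi_k(\bma)$ and therefore remains in both $\sigma\cup \{\pi_k(\bma)\}$ and $\sigma\ssm \{\pi_k(\bma)\}$. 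Moreover, adding or removing $\pi_k(\bma)$ cannot introduce new elements in the relevant difference with $\bpi(\bma)$, so no index larger than $k$ can appear; thus $\dom$ equals $k$ after either toggle. This establishes (a), and each cell is then paired with its unique $\pi_k(\bma)$-toggle in $Y_{\bma,k}$, yielding (b).

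Given (a), the set $A_{\bma,k}$ coincides with $A_{Y_{\bma,k}}^{\pi_k(\bma)}$ in the notation of \cref{matching-lemma}, which delivers (d) immediately. For (c), I must verify $\lcm(\sigma)=\lcm(\sigma\ssm\{\pi_k(\bma)\})$, equivalently $\pi_k(\bma)\mid \lcm(\sigma\ssm\{\pi_k(\bma)\})$. Choosing the witness $\bmb\in \sigma\ssm \bpi(\bma)$ with $\dom(\bma,\bmb)=k$ and observing that $\bmb\prec \bma$ (because $\bma=\max_\prec\sigma$), \cref{mainprop} gives $\lcm(\bma,\bmb,\pi_k(\bma))=\lcm(\bma,\bmb)$; since both $\bma$ and $\bmb$ lie in $\sigma\ssm \{\pi_k(\bma)\}$, the required divisibility follows.

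The main obstacle will be the bookkeeping in (a)/(b): carefully checking that $\dom$ is preserved under toggling $\pi_k(\bma)$, and in particular that no index exceeding $k$ can suddenly become a witness. Everything else reduces cleanly to \cref{l:bpi}, \cref{mainprop}, and \cref{matching-lemma}.
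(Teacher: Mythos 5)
Your proposal is correct and follows essentially the same route as the paper: verify that toggling $\pi_k(\bma)$ preserves membership in $Y_{\bma,k}$, apply \cref{matching-lemma} for acyclicity, apply \cref{mainprop} (via a witness $\bmb\in\sigma\ssm\bpi(\bma)$ with $\dom(\bma,\bmb)=k$) for homogeneity, and observe that the toggle pairs every cell to get no critical cells. You fill in more detail on the toggle-preservation step than the paper does (the paper simply asserts it as a ``Notice that''), and your observation that $\sigma\ssm\bpi(\bma)$ is literally unchanged under the toggle---because $\pi_k(\bma)\in\bpi(\bma)$---cleanly dispatches the ``obstacle'' you flag at the end.
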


\noindent{\it Proof of Claim.}  
Notice that if $\sigma \in Y_{\bma,k}$, then  $\sigma'=
\sigma \ssm \{\pi_k(\bma)\} \in Y_{\bma, k}$ as well. It follows directly from \cref{matching-lemma} that $A_{\bma,k}$ is an
acyclic matching.  To see that it is homogeneous, we need to see that
$$
\lcm(\sigma)=\lcm(\sigma') \quad\text{for all $\sigma\to \sigma'\in A_{\bma,k}$.}
$$ Since $\sigma \in Y_{\bma,k}$, we know by definition there is $\bmb\in
\sigma\ssm \bpi(\bma)$ such that $\dom(\bma,\bmb)=k$. By
\cref{mainprop}
$$\lcm(\bmb, \bma, \pi_k(\bma))=\lcm(\bmb,\bma),$$ hence, the matching is homogeneous.

Finally, let $\widetilde \sigma\in Y_{\bma,k}$. If $\pi_k(\bma)\in
\widetilde \sigma$, then we can take $\sigma=\widetilde\sigma$ in
\eqref{A}. Otherwise, we take $\sigma= \widetilde\sigma \cup
\{\pi_k(\bma)\}\in Y_{\bma, k}$.  In either case, $\widetilde\sigma$
is a vertex of an edge $\sigma\to \sigma'$ in $A_{\bma,k}$, hence the
matching $A_{\bma,k}$ has no critical cells.
\medskip

\noindent{\bf Step 4.} Let $\bma\in \cM_r$. We construct an acyclic matching on $G_{X_{\bma}}$ with set of critical cells equal to $Y_{\bma,-\infty}$.
 
In view of \eqref{Y-partition}, \cref{clusterlemma} gives that  the set 
$$A_{\bma}=\bigcup_{k=2}^q A_{\bma,k}$$
is a homogeneous acyclic matching on $G_{\bigcup_{k=2}^q
  Y_{\bma,k}}$.  Since each matching $A_{\bma,k}$ on $G_{Y_{\bma,k}}$ has no
critical cells, this matching has no critical cells either. 

Since
$\bigcup_{k=2}^q Y_{\bma,k}\subseteq X_{\bma}$, \cref{inclusions}(1) gives
that $A_{\bma}$ is also a homogeneous acyclic matching
on $G_{X_{\bma}}$. The set of critical cells is $X_{\bma}\ssm
\bigcup_{k=2}^q Y_{\bma,k}$, which is equal to $Y_{\bma, -\infty}$, by
\eqref{Y-remain}.

\medskip

\noindent{\bf Conclusion.} In view of
\eqref{X-partition}, \cref{clusterlemma} gives that
$\bigcup_{\bma\in \cM_r} A_{\bma}$ is a homogeneous acyclic matching on
$G_{\bigcup_{\bma\in \cM_r} X_{\bma}}$. Note that $\bigcup_{\bma\in \cM_r} A_{\bma}$
is precisely the set $A$ described in part (1) of the statement of the
theorem.

The set of critical cells is 
$$Y_{-\infty}=\bigcup_{\bma\in \cM_r} Y_{\bma,-\infty}=\{\sigma\in X\mid
\sigma\ne\varnothing, \, \sigma\subseteq \bpi({\max}_\prec\sigma)\}\,.$$
This is precisely the set whose cells are described in part (2) of the statement of the theorem. Since $$\bigcup_{\bma\in \cM_r} X_{\bma}=X\ssm \{\varnothing\}\subseteq
X\,,$$ \cref{inclusions}(1) implies that $\bigcup_{\bma\in \cM_r} A_{\bma}$ is an
acyclic matching on $G_X$; the set of critical cells is
$Y_{-\infty}\cup\{\varnothing\}$.

By \cite[Theorem~1.3]{BW} there is a CW complex $X_A$
  (the Morse complex of $X$) whose $i$-cells are in one-to-one
  correspondence with the $A$-critical $i$-cells of $X$, and $X_A$
  supports a free resolution of $I^r$.
\end{proof}

The following example illustrates many of the notations and concepts
used above. 

\begin{example}  \label{shadedtriangle}  In the running example $I =(xy, yz, zu)$, label the generators as before, namely 
 $$m_1=xy \quad m_2=yz \quad m_3=zu.$$ 
By \cref{same-ems}, the generators of $I^2$ are uniquely written as
$\bma \mbox{ where } \ba \in \cN_2$.

The Taylor complex $X$ for $I^2$ is a $5$-dimensional simplex whose
vertices are labeled with the $6$ generators of $I^2$. The monomial $m
= xyzu \in I^2$ corresponds to the vertex $(1,0,1)$.  The set $X_m$
from Step~1 of \cref{mainresult} consists of all faces of
  the Taylor complex containing $(1,0,1)$ and  any subset of vertices satisfying the constraints $$(a,b,c)\prec
  (1,0,1) \mbox{ and } a+b+c=2,$$ which are precisely the vertices
  $(2,0,0),(0,2,0),(1,1,0)$. Thus using the same monomial labeling convention as in Example \ref{e:running4}, we get
\begin{align*}
X_{xyzu} = &X_{\bm^{(1,0,1)}}\\
= &  \big\{ \{\bm^{(1,0,1)}\} \cup U \mid 
U \subseteq \{\bm^{(2,0,0)},\bm^{(0,2,0)},\bm^{(1,1,0)}\} \big\}\\
= &  \big\{ 
\{xyzu, x^2y^2, xy^2z, y^2z^2\}, 
\{xyzu, x^2y^2, xy^2z\},  
\{xyzu, x^2y^2, y^2z^2 \}, \\
&\{xyzu, xy^2z, y^2z^2\}, 
\{xyzu, x^2y^2\}, 
\{xyzu, xy^2z\}, 
\{xyzu, y^2z^2\}, 
\{xyzu\} \big\}.
\end{align*}

Next, we calculate the sets $Y_{m,k}$ for various $k$, as per Step~2
of \cref{mainresult}.  From \cref{e:running2} we know
$$\bpi(xyzu)= \{ \bm^{(1,0,1)}, \bm^{(1,1,0)} \}=\{xyzu, xy^2z\}.$$

For each $\sigma \in X_{xyzu}$, ${\max}_\prec \sigma=\bm^{(1,0,1)}$.
If $$\sigma=\{
\bm^{(1,0,1)},\bm^{(2,0,0)},\bm^{(0,2,0)},\bm^{(1,1,0)}\},$$
then clearly $\sigma \not\subseteq \bpi({\max}_\prec \sigma)$. For this $\sigma$,
$\dom(\sigma)=3$. 
In fact, $\dom(\sigma)=3$ for
any $\sigma\in X_{xyzu}$ as long as 
$$\sigma \not \subseteq \bpi({\max}_\prec \sigma) = \{ \bm^{(1,0,1)},
\bm^{(1,1,0)} \}.$$ Therefore,
\begin{align*}
Y_{xyzu, 3}= &  Y_{\bm^{(1,0,1)}, 3}\\
 = &  \big\{ 
    \{\bm^{(1,0,1)}, \bm^{(2,0,0)},\bm^{(0,2,0)},\bm^{(1,1,0)}\},\\
 &  \{\bm^{(1,0,1)}, \bm^{(2,0,0)},\bm^{(1,1,0)}\},
   \{\bm^{(1,0,1)}, \bm^{(2,0,0)},\bm^{(0,2,0)}\},\\
 & \{\bm^{(1,0,1)}, \bm^{(0,2,0)},\bm^{(1,1,0)}\},
   \{\bm^{(1,0,1)}, \bm^{(2,0,0)}\},
   \{\bm^{(1,0,1)}, \bm^{(0,2,0)}\}
\big\}.\\
\end{align*}
In other words, 
\begin{align*}
Y_{xyzu, 3}= &\big\{ \{xyzu, x^2y^2, xy^2z, y^2z^2\}, 
\{xyzu, x^2y^2, xy^2z\},\{xyzu, x^2y^2, y^2z^2\},
\\
& \{xyzu, xy^2z, y^2z^2\}, 
\{xyzu, x^2y^2\}, 
\{xyzu, y^2z^2\} 
\big\}.
\end{align*}

Viewed another way, $Y_{xyzu, 3} = X_{xyzu} - \left\{ \{xyzu, xy^2z\},
\{xyzu\} \right\}$.  Note that 
$$Y_{xyzu, 2}= \varnothing$$
and
$$Y_{xyzu, -\infty} = \left \{ \{\bm^{(1,0,1)}, \bm^{(1,1,0)}\}, \{
\bm^{(1,0,1)}\} \right\}=\left\{\{xyzu, xy^2z\}, \{xyzu\} \right\}. $$

Hence, recalling that $ \pi_3(\bm^{(1,0,1)})=\bm^{(1,1,0)}$ (see \cref{e:running1}),  the elements in $A_{m,3}$ consist of the following directed edges:

$$\begin{array}{ccc}
\{\bm^{(1,0,1)}, \bm^{(2,0,0)},\bm^{(0,2,0)},\bm^{(1,1,0)}\} &
   \longrightarrow &   \{\bm^{(1,0,1)}, \bm^{(2,0,0)},\bm^{(0,2,0)}\}\\
\{\bm^{(1,0,1)}, \bm^{(2,0,0)},\bm^{(1,1,0)}\} &
  \longrightarrow &  \{\bm^{(1,0,1)}, \bm^{(2,0,0)}\}\\
\{\bm^{(1,0,1)}, \bm^{(0,2,0)},\bm^{(1,1,0)}\} &
  \longrightarrow & \{\bm^{(1,0,1)}, \bm^{(0,2,0)}\}.
\end{array}
$$

There are $6$ critical cells in $ \cM_2$ which correspond to 
 \cref{f:critical} listed in \cref{e:list}. Note that $\{\bm^{(1,0,1)},
\bm^{(0,2,0)} \}$ is not a critical cell; i.e., it is not an edge in the diagram. However, $\{\bm^{(1,0,1)}, \bm^{(2,0,0)},\bm^{(0,2,0)}\}$ is a critical cell. Since the complex that supports the resolution does not contain  $\{\bm^{(1,0,1)},
\bm^{(0,2,0)} \}$ then it is not a simplicial complex, and hence the resolution is not simplicial.

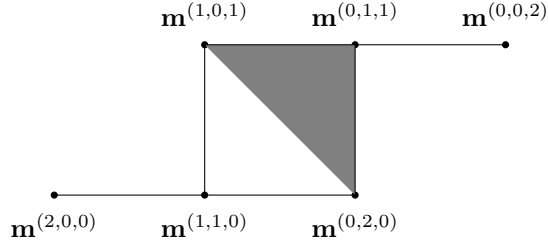
\begin{figure}
\begin{tikzpicture}
\coordinate (A) at (0, 0);
\coordinate (B) at (2, 0);
\coordinate (C) at (4, 0);
\coordinate (D) at (4,2 );
\coordinate (E) at (2, 2);
\coordinate (F) at (6, 2);
 \draw[black, fill=black] (A) circle(0.04);
 \draw[black, fill=black] (B) circle(0.04);
 \draw[black, fill=black] (C) circle(0.04);
 \draw[black, fill=black] (D) circle(0.04);
 \draw[black, fill=black] (E) circle(0.04);
 \draw[black, fill=black] (F) circle(0.04);
 \draw[draw = none, fill=gray] (E) -- (C) -- (D)  -- cycle;
\draw[-] (A) -- (B);
\draw[-] (D) -- (F);
\draw[-] (B) -- (C);
\draw[-] (B) -- (E);
\draw[-] (C) -- (D);
\draw[-] (E) -- (D);
\node[label = below :$\bm^{(2,0,0)}$] at (A) {};
\node[label = below :$\bm^{(1,1,0)}$] at (B) {};
\node[label = below :$\bm^{(0,2,0)}$] at (C) {};
\node[label = above :$\bm^{(0,1,1)}$] at (D) {};
\node[label = above :$\bm^{(1,0,1)}$] at (E) {};
\node[label = above :$\bm^{(0,0,2)}$] at (F) {};

\end{tikzpicture}
\caption{The critical cells for $I^2$ where $I=(xy,yz,zu)$}\label{f:critical}
\end{figure}

\end{example}

\section{The Morse complex $X_A$}\label{s:morse}

\cref{mainresult} states that there is a CW complex $X_A$ supporting a
free resolution of $I^r$, where $X$ is the Taylor complex of $I$ and
$A$ is an acyclic matching on the poset graph $G_X$ of $X$. Moreover,
the $i$-cells of $X_A$ are in one-to-one correspondence with the
$A$-critical $i$-cells of $X$.  Following the notation in~\cite{BW},
if $\sigma$ is an $A$-critical cell of $X$, we denote by $\sigma_A$
the unique corresponding cell of $X_A$. We use the notation
$\sigma'_A\le \sigma_A$ to say that the cell $\sigma'_A$ is contained
in the closure of the cell $\sigma_A$.

 Our goal in this section is to determine, given $A$-critical cells
 $\sigma$ and $\sigma'$ of $X$, under what conditions on $\sigma$ and
 $\sigma'$ do we get $$\sigma'_A\le \sigma_A?$$ It is not always
 immediately clear from the definition of a closure which cells are
 contained in the closure of other cells. Batzies and
 Welker~\cite{BW} characterized the cell ordering $\sigma'_A\le
 \sigma_A$ in the Morse complex $X_A$ in terms of certain paths in the
 directed graph $G^A_X$, called ``gradient paths'' (see
 \eqref{e:cells}).

 In this section, we will focus on the structure of the gradient paths
 in our setting, and show, in \cref{t:critical-cells}, exactly what
 the cell order $\sigma'_A\le \sigma_A$ in the Morse complex $X_A$
 means in terms of the $A$-critical cells $\sigma$ and $\sigma'$ of
 $X$. Given the technical nature of the discussions and the fact that
 they apply only to this particular proof, we have chosen to state
 \cref{t:critical-cells} early on. What follows after are all the
 components that go into its proof.

In the statement of \cref{t:critical-cells}, we denote the
$A$-critical $i$-cells of $X$ (as described in \cref{mainresult}(2))
by $$\sigma(\bma,D)= \{\bma\} \cup \{ \pi_j(\bma) \mid j \in D \}
\subseteq \bpi(\bma)$$ where $\ba\in \cN_r$, $D\subseteq
\supp(\ba)\ssm\{1\}$ and $|D|=i$.  

  We are now ready to state the main result of this section.
Note that
    part~(1) of \cref{t:critical-cells} follows immediately
    from \cref{mainresult}, but we have included it in order to have a
    complete statement for the characterization of the cells of
    $X_A$.

\begin{theorem}[{\bf The cells of $X_A$}]\label{t:critical-cells}
  Let $X_A$ be the Morse complex of the matching $A$ on the Taylor
  complex $X$ as described in \eqref{d:A}, and let $i>0$. Then:

 \begin{enumerate} 
\item (\cref{mainresult}) For every $i$-cell $c$ of $X_A$ there is a 
unique $A$-critical cell $\sigma=\sigma(\bma,D)$ of $X$ where $\bma
\in \cM_r$, $D \subseteq \supp(\bma) \ssm \{1\}$ and $|D|=i$, such
that $$c=\sigma_A.$$

 \item  If $c'$ is an $(i-1)$-cell of $X_A$, then
  $c' \leq c$ if and only if $c'=\sigma'_A$ where $$
    \sigma'=\sigma(\bma, D\ssm \{k\}) \qor \sigma'=\sigma(\pi_k(\bma),
    D\ssm \{k\})$$ for some $k\in D$. 
    \end{enumerate}
\end{theorem}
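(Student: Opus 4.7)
Part (1) is immediate from \cref{mainresult}(2). To prove part (2), I would invoke the Batzies--Welker characterization (\cite{BW}): for critical cells $\sigma$ (dim $i$) and $\sigma'$ (dim $i-1$) in $X$, one has $\sigma'_A\le \sigma_A$ in $X_A$ if and only if some \emph{gradient path} in $G_X^A$ connects a codimension-$1$ face of $\sigma$ to $\sigma'$. Here a gradient path is a directed walk in $G_X^A$ alternating between up-edges (reversed $A$-edges) and down-edges (non-$A$-edges).

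The codimension-$1$ faces of $\sigma=\sigma(\bma,D)$ split into two kinds: the faces $\sigma\setminus\{\pi_k(\bma)\}=\sigma(\bma,D\setminus\{k\})$ for each $k\in D$, which are themselves $A$-critical by \cref{mainresult}(2); and the single face $\tau_0:=\sigma\setminus\{\bma\}=\{\pi_i(\bma):i\in D\}$, which is typically not critical. The first kind immediately contributes $\sigma(\bma,D\setminus\{k\})_A\le \sigma_A$ via length-$0$ gradient paths, accounting for half of the statement.

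For $\tau_0$, I would first compute its position in the matching. By \cref{l:bpi}, ${\max}_\prec\tau_0=\bmb:=\pi_{j_0}(\bma)$ with $j_0=\min D$; a direct vector computation gives $\dom(\bmb,\pi_k(\bma))=k$ for every $k\in D\setminus\{j_0\}$, so $\dom(\tau_0)=\max D$ and $\pi(\tau_0)=\pi_{\max D}(\bmb)\notin\tau_0$. Hence $A$ pairs $\tau_0$ with the larger cell $\sigma':=\tau_0\cup\{\pi_{\max D}(\bmb)\}$, producing the up-edge $\tau_0\to\sigma'$; from $\sigma'$ the path may continue along any non-$A$ down-edge. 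I would iterate this up-down alternation and prove, for each $k\in D$, that there is an explicit gradient path from $\tau_0$ to the critical cell $\sigma(\pi_k(\bma),D\setminus\{k\})$ whose successive up-down steps exchange each vertex $\pi_i(\bma)$ (for $i\in D\setminus\{k\}$) for $\pi_i(\pi_k(\bma))=\pi_i(\bma)\cdot m_{\tau(k)}/m_k$.

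The main obstacle is this gradient-path analysis. To finish, one must verify that each constructed path is a legitimate sequence of edges in $G_X^A$---in particular, that each up-edge is an $A$-edge, which requires tracking ${\max}_\prec$, $\dom$, and $\pi$ on every intermediate cell---and conversely that no gradient path from $\tau_0$ can terminate at a critical cell outside the family $\{\sigma(\pi_k(\bma),D\setminus\{k\}):k\in D\}$. Acyclicity of $A$ (\cref{mainresult}(1)) ensures that every path terminates at a critical cell; the careful bookkeeping of these invariants along each path, together with the commutation identity $\pi_i(\pi_k(\bma))=\pi_k(\pi_i(\bma))$, constitutes the central technical content of the argument.
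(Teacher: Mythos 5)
Your overall architecture matches the paper's: part (1) is immediate from Theorem~\ref{mainresult}, and part (2) is reduced, via the Batzies--Welker characterization~\eqref{e:cells}, to a case split between the codimension-$1$ face $\sigma(\bma,D\ssm\{k\})\subseteq\sigma$ (inclusion, trivial) and gradient paths emanating from $\tau_0 = \sigma\ssm\{\bma\}$. Your computation of the first forced up-edge from $\tau_0$---that $\max_\prec\tau_0=\pi_{\min D}(\bma)$, $\dom(\tau_0)=\max D$, and the matched partner is $\tau_0\cup\{\pi_{\max D}(\pi_{\min D}(\bma))\}$---is correct and is exactly what the paper's Lemma~\ref{l:destination} uses as a starting point.

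However, your sketch of the path itself is only correct for the extremal cases $k=\min D$ and $k=\max D$. For $\min D<k<\max D$, the matching forces the first up-edge to add $\pi_{\max D}(\pi_{\min D}(\bma))$, which is \emph{not} one of the vertices $\pi_i(\pi_k(\bma))$ in the target cell; the path cannot simply ``exchange each $\pi_i(\bma)$ for $\pi_i(\pi_k(\bma))$.'' In the paper's construction (the $e$-row table in the proof of Lemma~\ref{l:destination}) the path for intermediate $k=d_e$ passes through an entire family of throwaway vertices $\pi_{\{d_j,d_\ell\}}(\bma)$ with $j<e<\ell$ that are later deleted; the net effect is your claimed ``exchange,'' but the alternating up-down structure forced by the matching is considerably more intricate. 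Your proposal would need to verify, step by step, that $\max_\prec$ and $\dom$ of each intermediate cell in that iterated path produce exactly the required up-edge, which is nontrivial bookkeeping that the proposal does not attempt.

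The more substantial gap is in the converse claim: that no gradient path from $\tau_0$ can reach a critical cell outside the family $\{\sigma(\pi_k(\bma),D\ssm\{k\}):k\in D\}$. You flag this as an obstacle but give no strategy. The paper handles it by introducing the auxiliary set $\osigma(\bma,D)=\{\bm^{\pi_L(\ba)}\mid L\subseteq D\}$ (Notation~\ref{n:critical-cells}) and proving two things: every gradient path starting inside $\osigma(\bma,D)$ stays inside it (Lemma~\ref{l:grad}, which relies on Lemma~\ref{l:still} to show $\pi_k(\bmb)\in\osigma(\bma,D)$ whenever $\bmb,\bmc\in\osigma(\bma,D)$ and $k=\dom(\bmb,\bmc)$), and the only $A$-critical $(|D|-1)$-cells contained in $\osigma(\bma,D)$ not containing $\bma$ are $\sigma(\pi_k(\bma),D\ssm\{k\})$ (Lemma~\ref{l:critical-k}). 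Without this containment argument, or something equivalent, the ``only if'' half of part (2) is unproved.

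So: correct high-level approach, same route as the paper, but the proposal misstates the gradient-path structure for intermediate $k$ and has a genuine gap on the converse direction where the paper's $\osigma$-based argument is needed.
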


 \cref{t:critical-cells} gives a concrete description of
  the ordering of cells in the Morse complex (see~\eqref{e:cells}) in
  terms of the $A$-critical cells of the Taylor complex.  To start our
  way towards the proof, we further develop the notation used for
  critical cells.
\begin{notation}\label{n:critical-cells} 
 Let $\ba \in \cN_r$ and let $D=\{i_1, \dots, i_s\}$ be a subset of $\supp(\ba)$. Set 
$$
\pi_D(\ba)=\ba + \sum_{i\in D} \mathbf{e}_{\tau(i)}-\sum_{i\in D} \mathbf{e}_{i} \quad\text{and}$$
$$\pi_D(\bma)=\bm^{\pi_D(\ba)}=\frac{\bma m_{\tau(i_1)}m_{\tau(i_2)}\cdots
   m_{\tau(i_s)}}{m_{i_1}m_{i_2}\dots m_{i_s}}.$$
 When $D=\varnothing$ we set $\pi_{\varnothing}(\ba)=\ba$ and $\pi_{\varnothing}(\bma)=\bma$. Further, we set
    $$\sigma(\bma,D)= \{\bma\} \cup \{ \pi_i(\bma) \mid i \in D \}
 \subseteq \bpi(\bma) \qand \,$$
 $$\osigma(\bma, D)= \{\bm^{\pi_L(\ba)} \mid L\subseteq D\}.$$
  \end{notation}

Since $\pi_{\varnothing}(\bma)=\bma$, we have $\bma\in \osigma(\bma, D)$ for all $D$. Also, note that $\sigma(\bma, \varnothing)=\osigma(\bma, \varnothing)=\{\bma\}$. 

Observe that $\sigma(\bma,D) \subseteq \osigma(\bma,D)$, and if
$D=D'\cup D''$ with $ D'\cap D''=\varnothing$, then 
 \begin{equation}\label{e:D'}
   \pi_D(\bma)=\pi_{D'}(\pi_{D''}(\bma))=\pi_{D''}(\pi_{D'}(\bma)).
 \end{equation}

 Note that the $A$-critical $i$-cells of $X$ appearing in
 \cref{mainresult}(2) are precisely the cells $\sigma(\bma, D)$ with
 $\ba\in \cN_r$, $D\subseteq \supp(\ba)\ssm\{1\}$  and $|D|=i$.

 We now describe the cell ordering in the Morse complex
 in terms of gradient paths, following the authors in~\cite{BW}.  A
 {\bf gradient path} in the graph $G^A_X$ (defined in
 \cref{amatching}) is a directed path
$$\mathcal P \colon \sigma_0\to \dots\to \sigma_n$$ where $\sigma_0$ is the {\bf initial
  point} and $\sigma_n$ is the {\bf end point}; see, e.g.,
\cite[p.~165]{BW}.  For cells $\sigma, \sigma'$ of $X$, the set of all
gradient paths in $G^A_X$ with initial point $\sigma$ and end point
$\sigma'$ is denoted by $\text{GradPath}_A(\sigma,\sigma')$.

By~\cite[Proposition~7.3]{BW} if $\sigma'', \sigma$ are $A$-critical
cells of $X$ of dimensions $i-1$ and $i$, respectively, with 
$\sigma''_A$ and $\sigma_A$ the corresponding cells in $X_A$ of
dimensions $i-1$ and $i$, respectively, then

\begin{equation}\label{e:cells}
\sigma''_A \leq \sigma_A \iff \begin{cases}
  \sigma'' \subseteq \sigma \mbox{ or }\\
  \text{GradPath}_A(\sigma',\sigma'') \neq \varnothing \mbox{ for some } 
  \ \sigma' \subseteq \sigma \mbox{ with }  \dim(\sigma')=i-1.
\end{cases}
\end{equation}

\begin{discussion}[{\bf Gradient paths}]\label{d:grad} A gradient path
  between two $(i-1)$-cells  $\sigma'$ and $\sigma''$, where $\sigma''$ is $A$-critical, can be visualized in
  terms of edges pointing up or pointing down:

      $$\begin{array}{ccccccccccc}
    && \sigma_1 &&&&&& \sigma_{u-1} &&  \\
    &\nearrow &&\searrow &&&&\nearrow &&\searrow & \\ 
    \sigma'=\sigma_0&&&& \sigma_2 &\ldots&\sigma_{u-2}&&&& \sigma_{u}=\sigma'' \\ 
    \end{array}.$$

\bigskip
    
To see this, recall (\cref{amatching}) that an edge 
    points down if it corresponds to an inclusion between 
     cells  and an edge points up if it is 
      the reverse of an edge in $A$. Therefore there
    are no edges that point up and end with $\sigma''$ since $\sigma''$ is an $A$-critical cell, so $\sigma''$
    can only be reached via a down arrow.  Also, recall that one
    cannot have two consecutive edges pointing up, because every cell  appears only once in the matching A. So every up
      arrow must be followed by a down arrow and once a (down) arrow
    hits a critical cell, then the gradient path must stop, since
    there is no up arrow from a critical cell.

Our discussion applied to the  gradient path drawn above
  forces the following: \begin{itemize}
  \item $\sigma'=\sigma_0, \ldots, \sigma_{u-1}$ are not  critical
    cells since they are  included in edges pointing up.

  \item $\sigma_u=\sigma''$ is the first critical cell along this path,
    and the path stops here.

  \item The dimensions of the  cells along the path are $i-1,
    i,i-1,i,\ldots,i-1$, since the path is a series of up (dimension
    goes up by one) and down (dimension goes down by one) arrows.

    \item   An upward arrow  {\tiny
      $\begin{array}{ccc}
         &&\sigma_{j+1}\\
         &\nearrow &\\
         \sigma_{j}&&\\
       \end{array}$}
     indicates 
      \begin{equation}\label{e:upward}
        \sigma_{j+1}= \sigma_j \cup
      \{\pi_k(\bmb)\},
      \end{equation}
      where $\bmb=\max(\sigma_j)$ and
      $k=\dom(\sigma_{j})$ by Theorem~\ref{mainresult}. In particular
      $\max(\sigma_j)=\max(\sigma_{j+1})$ in this case.

  \item A downward arrow   {\tiny 
      $\begin{array}{ccc}
         \sigma_{j}&&\\
         &\searrow&\\
         & &\sigma_{j+1}\\
       \end{array}$} 
    indicates $\sigma_{j+1} \subseteq
    \sigma_j$ and $\max(\sigma_{j+1}) \preceq
    \max(\sigma_j)$.
    
    \item The previous two items show that $\max(\sigma_j)$ does not
      increase as one proceeds through the gradient path.
    \end{itemize}

  \end{discussion}

 We now start proving part (2) of
  \cref{t:critical-cells}.  We first describe the cells of $X_A$ and
the order relation on the cells, in terms of the faces of $X$ and possible gradient paths between
them. \cref{l:destination}
establishes that there is always a gradient path of the type needed
for \cref{t:critical-cells}, and \cref{l:grad} says that those are the
only gradient paths.

 \begin{lemma}\label{l:destination} 
   Let $\bma \in \cM_r$ and $\sigma=\sigma(\bma,D)$ for some
   $D \subseteq \supp(\bma) \ssm \{1\}$ with $|D| \geq 2$, and let
   $k \in D$.  Then there is a gradient path in $G^A_X$ from
   $\sigma \ssm \{\bma\}$ to $\sigma(\pi_k(\bma), D \ssm \{k\})$.
 \end{lemma}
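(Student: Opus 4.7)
The plan is to prove existence of the gradient path by induction on $s = |D|$, exhibiting the path explicitly as a sequence of up-down pairs. For the base case $s = 2$, writing $D = \{d_1, d_2\}$ with $d_1 < d_2$, direct computations using \cref{l:bpi} give $\max_\prec(\sigma \ssm \{\bma\}) = \pi_{d_1}(\bma)$, $\dom(\sigma \ssm \{\bma\}) = d_2$, and $\pi(\sigma \ssm \{\bma\}) = \pi_{\{d_1, d_2\}}(\bma)$. The edge $\sigma \ssm \{\bma\} \to (\sigma \ssm \{\bma\}) \cup \{\pi_{\{d_1, d_2\}}(\bma)\}$ is then a reversed $A$-edge in $G_X^A$; following it with the down edge that removes the $\pi_{d_i}(\bma)$ with $d_i \neq k$ reaches the target $\sigma(\pi_k(\bma), D \ssm \{k\})$.

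For the inductive step $s \geq 3$, write $D = \{d_1 < \cdots < d_s\}$ and treat three cases. When $k = d_1$, the first up-down pair adds $\pi_{\{d_1, d_s\}}(\bma)$ and removes $\pi_{d_s}(\bma)$, reducing to the smaller instance on $D' = D \ssm \{d_s\}$ with the same $k$. Applying the inductive hypothesis and adjoining the extra vertex $\pi_{\{d_1, d_s\}}(\bma)$ to every cell of the resulting inductive path completes our path. The adjunction is valid because $\pi_{\{d_1, d_s\}}(\bma) \in \bpi(\pi_{d_1}(\bma))$ and $\max_\prec$ remains $\pi_{d_1}(\bma)$ throughout, so neither $\dom$ nor $\pi$ changes. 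The case $k = d_s$ is treated symmetrically, with the first down move removing $\pi_{d_1}(\bma)$ and the recursion on $D' = D \ssm \{d_1\}$; the adjunction check here requires that $\dom(\max_\prec, \pi_{\{d_1, d_s\}}(\bma)) = d_s$ whenever $\max_\prec \neq \pi_{d_s}(\bma)$, and that $\pi_{\{d_1, d_s\}}(\bma) \in \bpi(\pi_{d_s}(\bma))$ otherwise.

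When $d_1 < k < d_s$, combine the two strategies: first peel off all $d_i > k$ in descending order using the $k = d_1$ procedure (leaving behind ``wrong-pair'' vertices $\pi_{\{d_1, d_i\}}(\bma)$ in place of the needed $\pi_{\{k, d_i\}}(\bma)$), then peel off all $d_i < k$ in ascending order using the $k = d_s$ procedure, and finally run a cleanup phase that, for each $d_i > k$ in descending order, executes an up-down pair adding $\pi_{\{k, d_i\}}(\bma)$ and removing $\pi_{\{d_1, d_i\}}(\bma)$. The principal technical obstacle throughout is verifying at every intermediate cell that the chosen up-move is a reversed $A$-edge and the chosen down-move lies in $E_X \ssm A$; this reduces to concrete computations of $\max_\prec$, $\dom$, and $\pi$ using \cref{l:bpi}, \cref{mainprop}, and the definitions in \cref{d:generators,d:pi-cell}, and is most delicate in the middle case and its cleanup phase, where multiple extra vertices coexist and contribute competing $\dom$-values.
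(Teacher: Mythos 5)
Your overall strategy (explicit construction of the gradient path, handling the extremal cases $k=\min D$ and $k=\max D$ first and then the middle case) tracks the paper's approach fairly closely in spirit, but the two extremal cases you treat inductively rather than directly, and your middle-case construction differs in a way that breaks.

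The crucial constraint your proposal under-weights is that an up-step $\sigma''\to\sigma$ in $G_X^A$ is not freely chosen: it must be the reversal of an $A$-edge, and $A$-edges have the form $\sigma\to\sigma\ssm\{\pi(\sigma)\}$ with $\pi(\sigma)$ determined entirely by $\max_\prec\sigma$ and $\dom(\sigma)$. In your middle case $d_1<k=d_e<d_s$, after Phase~1 and the first up-down pair of Phase~2, the cell is
$$
\{\pi_{d_2}(\bma),\dots,\pi_{d_e}(\bma),\ \pi_{\{d_1,d_e\}}(\bma),\ \pi_{\{d_1,d_{e+1}\}}(\bma),\dots,\pi_{\{d_1,d_s\}}(\bma)\}.
$$
Its maximum is $\pi_{d_2}(\bma)$, and the leftover ``wrong-pair'' vertex $\pi_{\{d_1,d_s\}}(\bma)$ lies outside $\bpi(\pi_{d_2}(\bma))$ and contributes $\dom(\pi_{d_2}(\bma),\pi_{\{d_1,d_s\}}(\bma))=d_s$ (\cref{l:still}). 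Hence $\dom=d_s$ and the only admissible up-step adds $\pi_{\{d_2,d_s\}}(\bma)$, not the $\pi_{\{d_2,d_e\}}(\bma)$ your Phase~2 calls for. The same obstruction recurs at every subsequent step of Phases~2 and~3: each remaining $\pi_{\{d_1,d_j\}}(\bma)$ with $j>e$ forces $\dom\ge d_j$, so the matching $A$ never lets you add the smaller-indexed partners $\pi_{\{d_i,d_e\}}(\bma)$ while those vertices are still present. The paper's construction avoids this precisely by interleaving: in its Row $i$ it adds $\pi_{\{d_i,d_j\}}(\bma)$ while simultaneously deleting $\pi_{\{d_{i-1},d_j\}}(\bma)$ for $j=s,s-1,\dots,e+1$, so that at each intermediate cell the unique high-$\dom$ vertex is exactly the one the $A$-edge wants to handle next. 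You cannot first exhaust the large indices and then clean up; the cleanup moves are not edges of $G_X^A$.

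Two smaller remarks. First, in the $k=d_s$ case you write ``the first down move,'' but every gradient path between $(i-1)$-cells must begin with an up-arrow (\cref{d:grad}); presumably you mean the first up-down pair adds $\pi_{\{d_1,d_s\}}(\bma)$ and then removes $\pi_{d_1}(\bma)$, which is fine. Second, for the inductive adjunction arguments in the $k=d_1$ and $k=d_s$ cases to be airtight, you need not just the bare existence statement from the inductive hypothesis but also the running invariants you invoke (that $\max_\prec$ takes only the expected values along the inductive path, and that $\dom$ is not disturbed by the adjoined vertex); the first follows from the general monotonicity of $\max_\prec$ along gradient paths noted in \cref{d:grad} together with the known endpoints, but you should state this. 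These two cases do go through. The middle case, however, needs the paper's interleaved construction (or something equivalent) rather than a peel-then-cleanup schedule, and as written that part of your argument has a genuine gap.
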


 \begin{proof}
 Let $\sigma'=\sigma \ssm \{\bma\}$, $\sigma''=\sigma(\pi_k(\bma), D
 \ssm \{k\})$ and $D=\{d_1,\ldots,d_s\}$ where $$d_1 < \cdots <d_s.$$
 Then by \cref{l:bpi} we have  $$\pi_{d_s} (\bma) \prec
   \pi_{d_{s-1}}(\bma) \prec \cdots \prec \pi_{d_2}(\bma) \prec
   \pi_{d_1}(\bma).$$

   We start by constructing a gradient path in the most basic case
   $k=d_1=\min(D)$ carefully.  Following the rules in \cref{d:grad} we
   build the following gradient path in $G^A_X$, where the arrow from
   the noncritical cell  $\sigma'$ must be an up arrow, and the arrow
   to the critical cell  $\sigma''$ must be a down arrow. We use $u$ to denote the number of steps in the gradient path. 
     
  $$\begin{array}{ccccccccccc}
    && \sigma_1 &&&&&& \sigma_{u-1} &&  \\
    &\nearrow &&\searrow &&&&\nearrow &&\searrow & \\ 
    \sigma'=\sigma_0&&&& \sigma_2 &\ldots&\sigma_{u-2}&&&& \sigma_{u}=\sigma'' \\ 
     \end{array}.$$
   \medskip
   
Using \cref{l:bpi} we observe that $\max(\sigma_0)=\pi_{d_1}(\bma)$ and
   $\dom(\sigma_0)=d_s$, so by \eqref{e:upward}, we have only one choice for the upward arrows. Thus 

      \medskip

   \noindent {\bf $\sigma_1$: }  $\sigma_{1}= \sigma_0 \cup
   \{\pi_{d_s}(\pi_{d_1}(\bma))\}=\{\pi_{d_1} (\bma), \cdots,
   \pi_{d_s}(\bma), \pi_{\{d_s,d_1\}}(\bma)\}.$

   \medskip
   
   \noindent {\bf $\sigma_2$: } The next downward arrow is a simple
   elimination, and here we choose the option $$\sigma_2=\sigma_1 \ssm
   \{\pi_{d_s} (\bma)\}=\{\pi_{d_1}(\bma), \cdots,
   \pi_{d_{s-1}}(\bma), \pi_{\{d_s,d_1\}}(\bma)\}.$$

        \noindent {\bf $\sigma_3$: } Once again, since
     $\max(\sigma_2)=\pi_{d_1}(\bma)$ and $\dom(\sigma_0)=d_{s-1}$, 
     \begin{align*}
       \sigma_{3}=& \sigma_2\cup\{\pi_{d_{s-1}}(\pi_{d_1}(\bma))\}\\
       =& \{\pi_{d_1} (\bma), \cdots, \pi_{d_{s-1}}(\bma),
            \pi_{\{d_s,d_1\}}(\bma),
            \pi_{\{d_{s-1},d_1\}}(\bma)\}.
     \end{align*}
  
   We continue in this manner, with
   every downward arrow eliminating the largest remaining $d_i$, and
   finally we arrive at

   \medskip
   
   \noindent {\bf $\sigma_{u-1}$: } $\sigma_{u-1}= \{\pi_{d_1} (\bma),
     \pi_{d_2}(\bma),
      \pi_{\{d_s,d_1\}}(\bma), \cdots, \pi_{\{d_2,d_1\}}(\bma)\}$

         \medskip

     \noindent {\bf $\sigma_u$: } $\sigma_{u}=\sigma_{u-1}\ssm
       \pi_{d_2}(\bma) = \sigma(\pi_{d_1} (\bma), D \ssm \{d_1\})
       = \sigma''.$

         \medskip

   So we have shown the existence of the
   gradient path when $k=d_1$.
   
\smallskip

If $k=d_e \in D$ and $1<e<s$, we construct a gradient path from
   $\sigma'$ to $\sigma''$ below. For a cleaner picture we use the
   product of indices in $L$ to denote the monomial $\pi_L(\bma)$ for
   $L \subseteq D$, and we keep track of all these indices in the table below.
   We start with $\sigma_0=\{\pi_{d_1}
     (\bma), \cdots, \pi_{d_s}(\bma)\}.$

   \medskip
   
   {\tiny
   $$\begin{array}{c|c|c|c|c|c|c|c|c|c|c}

       \mbox{\bf Row } & \mbox{\bf Add} & & \mbox{\bf Add} &&&
       \mbox{\bf Add} && \mbox{\bf Add} & & \mbox{\bf Remaining }
       \pi_L(\bma)\\
      & & \mbox{\bf Delete} & & \mbox{\bf Delete} & & & \mbox{\bf
         Delete} & & \mbox{\bf Delete} &\\
       \hline
       &&&&&&&&&& \\
       1 &d_1d_s& &d_1d_{s-1}&&\cdots
       &d_1d_{e+1}&&d_1d_e&& d_2,\ldots,d_e,\\
         &&d_s&&d_{s-1}&&&d_{e+1}&&d_1& d_1d_e,\\
         &&&&&&&&&& d_1d_{e+1},\ldots, d_1d_s\\
       &&&&&&&&&& \\
       \hline
       &&&&&&&&&& \\
       2 & d_2d_s&&d_2d_{s-1}&&\cdots&d_2d_{e+1}&&d_2d_e&&  d_3,\ldots,d_e,\\
       &&d_1d_s&&d_1d_{s-1}&&&d_1d_{e+1}&&d_2&d_1d_e, d_2d_e,\\
       &&&&&&&&&&  d_2d_{e+1},\ldots,d_2d_s\\
       &&&&&&&&&& \\
       \vdots &&&&&\vdots&&&&& \vdots \\
       &&&&&&&&&& \\
       e-1 & d_{e-1}d_s&&d_{e-1}d_{s-1}&&\cdots&d_{e-1}d_{e+1}&&d_{e-1}d_e&&  d_e\\
       &&d_{e-2}d_s&&d_{e-2}d_{s-1}&&&d_{e-2}d_{e+1}&&d_{e-1}&d_1d_e,\ldots,d_{e-1}d_e, \\
       &&&&&&&&&&  d_{e-1}d_{e+1},\ldots,d_{e-1}{d_s}\\
       &&&&&&&&&& \\
       \hline
       &&&&&&&&&& \\
       e &d_ed_s&&d_ed_{s-1}&&\cdots&d_ed_{e+1}&&&& d_e\\
       &&d_{e-1}d_s&&d_{e-1}d_{s-1}&&&d_{e-1}d_{e+1}&&&d_1d_e,\ldots,d_{e-1}d_e, \\
       &&&&&&&&&&d_{e}d_{e+1},\ldots,d_{e}d_s \\
   \end{array}$$
}

     \medskip

     \noindent {\bf Row 1:} Recall that each up arrow adds a monomial
     and each down arrow deletes one, so the first series of arrows in
     the gradient path will start from $\sigma_0$ and do the following
     sequence of additions and deletions:
     \begin{align*}
       \mbox{ add } \pi_{\{d_1,d_s\}}(\bma),
       \mbox{ delete } \pi_{d_s}(\bma),
       &\mbox{ add } \pi_{\{d_1,d_{s-1}\}}(\bma),
       \mbox{ delete }\pi_{d_{s-1}}(\bma),\\
       \ldots,
       \mbox{ add } \pi_{\{d_1,d_{e+1}\}}(\bma),
       \mbox{ delete }\pi_{d_{e+1}}(\bma),
     &\mbox{ add } \pi_{\{d_1,d_e\}}(\bma),\mbox{\bf delete }\pi_{d_1}(\bma).
     \end{align*}
    The final down arrow in this row eliminates $\pi_{d_1}(\bma)$,
    making the monomial $\pi_{d_2}(\bma)$ the largest one in the
    remaining cell , whose elements are the monomials $\pi_L(\bma)$
    where $L$ ranges over index sets listed at the end of  Row~1.
     \medskip
     
     \noindent {\bf Middle rows:} In Row~$i$, $1<i<e$, by \cref{l:bpi}
     $\pi_{d_i}(\bma)$ is the largest remaining monomial at that spot
     in the path, and we go through the same moves:
     \begin{align*}
       \mbox{ add }\pi_{\{d_i,d_s\}}(\bma),
       \mbox{ delete }\pi_{\{d_{i-1},d_s\}}(\bma),
       &\mbox{ add }\pi_{\{d_i,d_{s-1}\}}(\bma),
       \mbox{ delete }\pi_{\{d_{i-1},d_{s-1}\}}(\bma), \\
       \ldots, 
       \mbox{ add }\pi_{\{d_i,d_{e+1}\}}(\bma),
       \mbox{ delete }\pi_{\{d_{i-1},d_{e+1}\}}(\bma), 
       & \mbox{ add }\pi_{\{d_i,d_e\}}(\bma),
      \mbox{\bf delete }\pi_{d_i}(\bma),
     \end{align*}
     eliminating $\pi_{d_i}(\bma)$ at the very end to make $\pi_{d_{i+1}}(\bma)$ the largest monomial.

  \medskip
     
     \noindent {\bf Row e:} The final row of the table starts from a
     point in the gradient path where $\pi_{d_e}(\bma)$ is the
     maximal label, and we go through the same moves as Row~$i$, with $i=e$, but
     we skip the last two steps: we do not add
     $\pi_{\{d_e,d_e\}}(\bma)$ (since we already have it) and we do
     not delete $\pi_{d_e}(\bma)$.  Now the very last set of indices
     in the table are those of $\pi_L(\bma)$ appearing in $\sigma''$,
     and hence we have built a gradient path between $\sigma'$ and
     $\sigma''$.
     
     \smallskip
     
  Finally, if $k=d_s$ or, in other words, $e=s$ we build a more
     compact version of the path above. The path below with initial
     point  $\sigma'$ and  end point  $\sigma''$ shows only the
     indices $L$ appearing in each $\sigma_i$ along the way.

{\tiny
 $$\begin{array}{ccccccccccc}
    && {\left \{ \begin{array}{c}
     d_1, d_1d_s \\ \vdots \\d_s 
         \end{array} \right \}}
    &&&&&&
    {\left \{ \begin{array}{c}
     d_1d_s\\\vdots\\ d_{s-1} ,d_{s-1}d_s \\ d_s 
      \end{array} \right \}}
    &&  \\
    &\nearrow &&\searrow &&&&\nearrow &&\searrow & \\ 
    {\left \{ \begin{array}{c}
     d_1\\\vdots\\d_s 
      \end{array} \right \}}
    &&&&
    {\left \{ \begin{array}{c}
     d_1d_s\\ d_2\\\vdots\\d_s 
      \end{array} \right \}}
    &\ldots&
    {\left \{ \begin{array}{c}
     d_1d_s\\\vdots\\d_{s-2}d_s\\d_{s-1}\\d_s 
      \end{array} \right \}}
    &&&&
    {\left \{ \begin{array}{c}
     d_1d_s\\\vdots\\d_{s-1}d_s\\d_s 
      \end{array} \right \}}   
     \end{array}.$$
     
}
 \end{proof}

 We next show that
  \cref{l:destination} is reversible, that is, we prove that all
  gradient paths connect cells of the given form. In order to do so,
  we first show in \cref{l:critical-k} that all critical cells
  contained in $\osigma(\bma, D)$ of dimension one less either contain
  $\bma$ or have the form given in~\cref{l:destination}, and then we
  establish in \cref{l:still} a few basic facts about the cells and
  their orders.

\begin{lemma}\label{l:critical-k} Let $\bma \in \cM_r$ and
  $\varnothing \neq D \subseteq \supp(\bma) \ssm \{1\}$.  If $\sigma'$
  is a $(|D|-1)$-dimensional $A$-critical cell contained in
  $\osigma(\bma,D)$ and $\bma\notin \sigma'$, then $\sigma'
  =\sigma(\pi_k(\bma), \, D\ssm \{k\})$ for some $k\in D$.
\end{lemma}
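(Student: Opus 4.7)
The plan is to combine the explicit description of $A$-critical cells from \cref{mainresult}(2), the unique monomial representation in \cref{same-ems}, and a short linear-algebraic observation about the tree $T$ constructed in \cref{construct}. First, I would invoke \cref{mainresult}(2) to write $\sigma' = \sigma(\bmb,E) = \{\bmb\}\cup\{\pi_i(\bmb)\mid i\in E\}$, where $\bmb = {\max}_\prec \sigma'$ and $E\subseteq \supp(\bmb)\ssm\{1\}$ has $|E|=|D|-1$. Because $\bmb\in\sigma'\subseteq \overline{\sigma}(\bma,D)$ and $\bma\notin\sigma'$, the element $\bmb$ must equal $\pi_K(\bma)$ for some nonempty $K\subseteq D$; the whole lemma then reduces to proving $|K|=1$.

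For each $i\in E$, the condition $\pi_i(\bmb)\in \overline{\sigma}(\bma,D)$ says $\pi_i(\pi_K(\bma))=\pi_L(\bma)$ for some $L\subseteq D$. By \cref{same-ems} this is equivalent to the vector identity
\[
\sum_{s\in L}(\mathbf{e}_{\tau(s)}-\mathbf{e}_s) \;=\; \sum_{s\in K}(\mathbf{e}_{\tau(s)}-\mathbf{e}_s) + (\mathbf{e}_{\tau(i)}-\mathbf{e}_i)
\]
in $\mathbb{R}^q$. The key observation, and the most interesting step in the plan, is that the $q-1$ vectors $\mathbf{e}_{\tau(s)}-\mathbf{e}_s$ for $s=2,\dots,q$ are linearly independent: they are the oriented edge-boundaries of the tree $T$, and one proves the claim by the standard leaf-pruning argument (any leaf $v_j\ne v_1$ of the current subtree isolates its coefficient at position $j$, forcing it to vanish, and then one iterates). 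Matching coefficients across the identity then gives
\[
\mathbf{1}[s\in L] \;=\; \mathbf{1}[s\in K] + \mathbf{1}[s=i] \qquad \text{for every } s\ge 2.
\]
Evaluating at $s=i$ forces $i\notin K$ (otherwise the right-hand side equals $2$), and for $s\ne i$ we obtain $L=K\cup\{i\}$; the containment $L\subseteq D$ further forces $i\in D$.

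To finish, the inclusion $E\subseteq D\ssm K$ yields $|D|-1=|E|\le|D|-|K|$, so $|K|\le 1$, and combined with $K\ne\varnothing$ this gives $K=\{k\}$ for some $k\in D$. Then $|E|=|D|-1=|D\ssm\{k\}|$ forces $E=D\ssm\{k\}$, so $\sigma' = \sigma(\pi_k(\bma), D\ssm\{k\})$, as claimed. The main obstacle in this plan is recognizing that the conclusion hinges entirely on a single linear-algebraic fact about the tree $T$; once that fact is in hand, everything else is bookkeeping.
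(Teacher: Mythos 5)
Your argument is correct, and it streamlines the paper's proof. The paper also reduces to a comparison of exponent vectors, but it argues index-by-index: starting with $\bmb = \pi_L(\bma)$ and $\sigma'=\sigma(\bmb,D')$, it asserts $d\notin L$ for $d\in D'$, cancels $L\cap L'$, and then examines the coefficient of $\mathbf{e}_j$ at the maximal index $j=\max(L\cup L'\cup\{d\})$ to deduce $j=d\in L'\subseteq D$; the counting $|D'|=|D|-1 \le |D|-|L|$ then forces $|L|=1$. Your version replaces the ``look at the maximum'' step by the cleaner observation that the vectors $\mathbf{e}_{\tau(s)}-\mathbf{e}_s$, $s=2,\dots,q$, are the oriented edge vectors of the tree $T$ and hence linearly independent, so the coefficient identity $\mathbf{1}[s\in L]=\mathbf{1}[s\in K]+\mathbf{1}[s=i]$ falls out directly. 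This buys you two small things: it automatically proves $i\notin K$ (the analogue of the paper's assertion $d\notin L$, which the paper states without an explicit justification and which is needed to write the left-hand side as a sum over $L\cup\{d\}$ as a set), and it handles the $|D|=1$ case uniformly rather than as a separate base case. The counting step at the end is the same in both. One tiny notational point: you should note that $L$ may depend on the chosen $i\in E$, though this does not affect the conclusion $E\subseteq D\ssm K$.
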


\begin{proof}
Suppose that $|D| = 1$.  Then $\osigma(\bma, D) = \sigma(\bma, D) = \{\bma,
\pi_k(\bma)\}$ where  $D=\{k\}$.  If $\sigma'$ is a 0-critical cell in
this set and $\bma  \notin \sigma'$, then  $\sigma' = \{\pi_k(\bma)\}$,
hence $\sigma' =\sigma(\pi_k(\bma), \varnothing)$.

Now assume that $|D| \geq 2$, and let $\sigma'
  =\sigma(\bmb, D')$ for some $\bmb \in \cM_r$ and $D'\subseteq
  \supp(\bmb)\ssm \{1\}$ with $|D'|=|D|-1$ and with $\bma \not\in
  \sigma'$. Then $$\bmb \in \sigma(\bmb, D') = \sigma' \subseteq
  \osigma(\bma, D).$$ Since $\bmb\in \osigma(\bma,D)$ and $\bma \ne
  \bmb$, for some nonempty $L\subseteq D$ we
  have $$\bmb=\pi_L(\bma).$$

\begin{claim} $D'\subseteq D\ssm L$.
\end{claim}

\noindent{\it Proof of Claim.}  Let $d\in D'$.
Then $$\pi_d(\pi_L(\bma)) = \pi_d(\bmb) \in \sigma(\bmb, D') \subseteq
\osigma(\bma,D).$$ Consequently, for some $L'\subseteq D$
\begin{equation} \label{e:pre-m}
\pi_d(\pi_L(\bma))=\pi_{L'}(\bma).
\end{equation}

By \cref{n:critical-cells,d:generators}, this implies that 
$$\sum_{i\in L} \mathbf{e}_{i} 
-\sum_{i\in L} \mathbf{e}_{\tau(i)} + \mathbf{e}_d -\mathbf{e}_{\tau(d)} =
\sum_{i\in L'} \mathbf{e}_{i}
- \sum_{i\in L'} \mathbf{e}_{\tau(i)}.$$

All indices in $L \cap L'$ appear on both sides equally, so we can
cancel those out and assume that $L \cap L'=\varnothing$. Now let $j =
\max\{L\cup L' \cup \{d\}\}$. If $j\neq d$, then considering the fact
that $\tau(i)<i$ for each $i\neq 1$, $j \in L \cap L'$ (since
$\mathbf{e}_j$ must appear on both sides of the equation). This is
impossible as we assumed $L\cap L' =\varnothing$. Therefore $j=d$. It
follows that $d \in L' \subseteq D$, and since
$L\cap L' = \varnothing$, it also follows that $d\notin L$. 
Therefore $D' \subseteq D \ssm L$, which proves our claim.

Since $\sigma'$ is a $(|D|-1)$-cell, $\sigma'$ has exactly $|D|$
elements; on the other hand, $\sigma' =\sigma(\bmb, D')$ has $|D'|+1$
elements.  Therefore, $|D'|=|D|-1$.  Under the assumption that
$D'\subseteq D\ssm L$, it follows that $L$ must have just
one element, which gives $$\bmb=\pi_k(\bma) \qand
D'=D\ssm \{k\}$$ for some $k\in D$, which ends our argument.
\end{proof}

\begin{lemma} 
\label{l:still}
Let $\ba, \bb,\bc \in \cN_r$, such that $\bmb,\bmc \in \osigma(\bma,D)$,
where $D \subseteq \supp(\ba) \ssm \{1\}$.
Then there exist $L, L' \subset D$ such that
  $$\bmb=\pi_{L'}(\bma), \quad \bmc=\pi_{L}(\bma).$$ Set $ k=\max \left(
(L\cup L')\ssm(L\cap L') \right).$ Then
$$\bmc \prec \bmb \iff  k \in L$$ 
and in this case  $\dom(\bmb,\bmc)=k.$
In particular, $\pi_k(\bmb)\in \osigma(\bma,D)$.
\end{lemma}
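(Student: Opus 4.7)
The plan is to unpack $\bmb = \pi_{L'}(\bma)$ and $\bmc = \pi_L(\bma)$ coordinate-by-coordinate and let the condition $\tau(i) < i$ force $k$ to be the dominant index. First I would use \cref{n:critical-cells} to write, for every $j$,
$$b_j - c_j = \bigl(\#\{i \in L' \colon \tau(i) = j\} - [j \in L']\bigr) - \bigl(\#\{i \in L \colon \tau(i) = j\} - [j \in L]\bigr),$$
where $[\ \cdot\ ]$ denotes the Iverson bracket. The contributions from $L\cap L'$ cancel on both sides, so only the symmetric difference $(L\cup L')\ssm (L\cap L')$ matters, and its maximum is $k$.

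Next I would show $b_j = c_j$ for all $j>k$. Any $j>k$ that lies in $L\cup L'$ must lie in $L\cap L'$, so $[j\in L]=[j\in L']$; and any $i$ with $\tau(i)=j$ has $i>j>k$, hence again $i\in L\cap L'$, so the two counts $\#\{i\in L\colon \tau(i)=j\}$ and $\#\{i\in L'\colon \tau(i)=j\}$ are equal. This already gives $\dom(\bmb,\bmc)\le k$. At $j=k$ the same $\tau(i)<i$ argument forces the two counts $\#\{i\in L\colon \tau(i)=k\}$ and $\#\{i\in L'\colon \tau(i)=k\}$ to coincide, so they cancel and leave $b_k-c_k = [k\in L] - [k\in L']$. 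Since $k$ lies in exactly one of $L\ssm L'$ or $L'\ssm L$, this equals $+1$ precisely when $k\in L$ and $-1$ otherwise, yielding the equivalence $\bmc\prec\bmb \iff k\in L$ and simultaneously $\dom(\bmb,\bmc)=k$.

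For the final assertion, assuming $k\in L$ (so $k\notin L'$), I would first verify $\pi_k(\bmb)$ makes sense: $b_k = a_k + \#\{i\in L'\colon \tau(i)=k\}\ge a_k >0$ since $k\in D\subseteq \supp(\ba)$, so $k\in \supp(\bb)$. Then \eqref{e:D'} applied to the disjoint decomposition $\{k\}\cup L'$ gives
$$\pi_k(\bmb) = \pi_k(\pi_{L'}(\bma)) = \pi_{L'\cup\{k\}}(\bma),$$
and $L'\cup\{k\}\subseteq D$ places this element in $\osigma(\bma,D)$.

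The main technical nuisance I expect is the bookkeeping between $L$, $L'$, and $L\cap L'$ in the coordinate computation; once it is carried out cleanly, the inequality $\tau(i)<i$ collapses everything at indices $j\ge k$ onto the two indicator terms at the single coordinate $k$, so the algebra itself is light.
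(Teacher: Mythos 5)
Your proof is correct and follows essentially the same route as the paper: the paper writes $\bb-\bc = \pi_{L'}(\ba)-\pi_L(\ba)$ as a single vector identity in terms of the symmetric difference $(L\cup L')\ssm(L\cap L')$ and then observes that $\tau(i)<i$ forces the dominant coordinate to be $k$ with contribution $\pm 1$; you carry out the identical reasoning coordinate-by-coordinate. Your explicit verification of the final clause — checking $k\in\supp(\bb)$ so that $\pi_k(\bmb)$ is defined, and then applying \eqref{e:D'} to the disjoint decomposition $\{k\}\sqcup L'$ to land in $\osigma(\bma,D)$ — is a detail the paper's proof leaves implicit, and it is handled correctly.
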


\begin{proof}  The existence of $L, L'$ follows directly from the definition of $\osigma(\bma, D)$ in \cref{n:critical-cells}.
By assumption we know $\bb=\pi_{L'}(\ba)$
    and $\bc=\pi_{L}(\ba)$.  Let $\bb=(b_1,\ldots,b_q)$ and
    $\bc=(c_1,\ldots,c_q)$.  By \cref{d:generators}, $\bc \prec \bb$ if
    and only if $b_j - c_j> 0$ where $j$ is the largest index with
    $b_j \neq c_j$. Moreover
    
  \begin{align*}
  \bb- \bc & = \pi_{L'}(\ba) - \pi_{L}(\ba)\\
  &= \left (\ba + \sum_{i\in L'} \mathbf{e}_{\tau(i)} - \sum_{i\in L'} \mathbf{e}_{i} \right ) -
  \left ( \ba + \sum_{i\in L} \mathbf{e}_{\tau(i)} - \sum_{i\in L} \mathbf{e}_{i} \right) \\
  &=  \sum_{i\in L' \ssm L\cap L'} \mathbf{e}_{\tau(i)} + \sum_{i\in L \ssm L\cap L'} \mathbf{e}_{i} - \left ( \sum_{i\in L' \ssm L\cap L'} \mathbf{e}_{i} +\sum_{i\in L \ssm L\cap L'} \mathbf{e}_{\tau(i)} \right ). 
\end{align*}

Since $\tau(i) < i$ when $i\neq 1$, if  $k \in L$, then it follows immediately
that $\bc \prec \bb$ and $\dom(\bb,\bc)=k$. If $k\in L'$ a similar equation for
$\bc- \bb$ shows that $\bb \prec \bc$ and $\dom(\bc,\bb)=k$.
  
 \end{proof}

 We are now ready to prove a converse statement of \cref{l:destination}, which
 is the final ingredient needed to prove \cref{t:critical-cells}.

\begin{lemma}\label{l:grad} Let $\bma \in \cM_r$,
  $D \subseteq \supp(\bma) \ssm \{1\}$ and $\sigma=\sigma(\bma,D)$ an
  $A$-critical cell of $X$ of dimension $|D|$, and $\sigma'= \sigma
  \ssm \{\bma\}$ a noncritical cell of $\sigma$.  Suppose $\sigma''$
  is an $A$-critical cell of $X$ of dimension $|D|-1$ which is
  connected to $\sigma'$ via the gradient path $$\mathcal P 
  \colon \sigma'=\sigma_0\to \sigma_1 \to \cdots \to \sigma_{h-1} \to
  \sigma_h=\sigma''$$ in $G^A_X$.  Then for some $k\in
  D$ $$  \sigma''=\sigma(\pi_k(\bma), D\ssm \{k\}).$$
\end{lemma}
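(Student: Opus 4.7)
The plan is to track the gradient path vertex-by-vertex and show that the containment $\sigma_j\subseteq \osigma(\bma,D)$ and the non-containment $\bma\notin \sigma_j$ are both preserved at every step. Once we reach $\sigma_h=\sigma''$, the conclusion is immediate from \cref{l:critical-k}, which classifies critical $(|D|-1)$-cells inside $\osigma(\bma,D)$ that avoid $\bma$.

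The base case is clear: $\sigma_0=\sigma\ssm\{\bma\}=\{\pi_i(\bma)\mid i\in D\}\subseteq\bpi(\bma)\subseteq \osigma(\bma,D)$, and $\bma\notin \sigma_0$ by construction. For the inductive step, I would separate the two kinds of arrows using \cref{d:grad}. A downward arrow $\sigma_j\to\sigma_{j+1}$ gives $\sigma_{j+1}\subseteq\sigma_j$, so both invariants pass automatically. An upward arrow gives $\sigma_{j+1}=\sigma_j\cup\{\pi_k(\bmb)\}$, where $\bmb={\max}_\prec\sigma_j$ and $k=\dom(\sigma_j)$. By the inductive hypothesis, $\bmb\in\osigma(\bma,D)$, so $\bmb=\pi_{L'}(\bma)$ for some $L'\subseteq D$; by definition of $\dom(\sigma_j)$, there exists $\bmc\in\sigma_j\ssm\bpi(\bmb)$ with $\bmc=\pi_L(\bma)$ (also in $\osigma(\bma,D)$) and $\dom(\bmb,\bmc)=k$. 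This is exactly the setup of \cref{l:still}, whose final clause yields $\pi_k(\bmb)\in\osigma(\bma,D)$, and hence $\sigma_{j+1}\subseteq\osigma(\bma,D)$.

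For the non-containment of $\bma$, note that since $\bma\notin\sigma_j$ but $\bmb\in\sigma_j\subseteq\osigma(\bma,D)$, we have $L'\ne\varnothing$, so applying \cref{l:still} with the roles $L:=L'$ and $L':=\varnothing$ (taking $\bmc:=\bmb$ and $\bmb:=\bma$) gives $\bmb\prec\bma$. Combining with $\pi_k(\bmb)\prec\bmb$ (from \cref{l:bpi}, once we note $k\in\supp(\bmb)$, which is guaranteed by \cref{l:still}), we obtain $\pi_k(\bmb)\ne\bma$, so $\bma\notin\sigma_{j+1}$.

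Having established both invariants along the whole path, $\sigma''=\sigma_h$ is an $A$-critical $(|D|-1)$-cell contained in $\osigma(\bma,D)$ with $\bma\notin\sigma''$, and \cref{l:critical-k} forces $\sigma''=\sigma(\pi_k(\bma),D\ssm\{k\})$ for some $k\in D$. The main obstacle is verifying that each upward arrow keeps us inside $\osigma(\bma,D)$; this is precisely why \cref{l:still} was set up, so the induction runs cleanly once one observes that the witness $\bmc\in\sigma_j$ used to compute $\dom(\sigma_j)$ automatically lies in $\osigma(\bma,D)$ by the previous inductive step.
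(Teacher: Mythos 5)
Your proof is correct and essentially matches the paper's argument: both induct along the gradient path, using \cref{l:still} to preserve the inclusion $\sigma_j \subseteq \osigma(\bma,D)$, and finish with \cref{l:critical-k}. The only difference is cosmetic---for $\bma \notin \sigma''$, the paper appeals directly to the monotonicity of ${\max}_\prec(\sigma_j)$ observed in \cref{d:grad}, whereas you re-derive it step-by-step via \cref{l:still} and \cref{l:bpi}, a slightly longer but equivalent verification.
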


\begin{proof} 
  We will show that for each $i$, if
  $\sigma_i\subseteq \osigma(\bma,D)$, then $\sigma_{i+1}\in
  \osigma(\bma,D)$. The arrow $$\sigma_{i} \to \sigma_{i+1}$$ in
  $\mathcal P$ could be one of two forms. If it is a downward arrow, then by \cref{d:grad} we have an 
  inclusion $\sigma_{i+1} \subseteq \sigma_i \subseteq
  \osigma(\bma,D)$.

  If the arrow is pointing up, from \cref{d:grad} we
  know $$\sigma_{i+1}= \sigma_i \cup \{\pi_k(\bmb)\},$$ where
  $\bmb=\max(\sigma_i)$ and $k=\dom(\sigma_{i+1}) \ne -\infty$.  We have thus $k=\dom(\bmb, \bmc)$ for some $\bmc\in \sigma_{i+1}\ssm\Pi(\bmb)$ with $\bmc\prec \bmb$. In particular, we have $\bmc\in \sigma_i$. Since we assumed $\sigma_i\subseteq \osigma(\bma,D)$, we know $\bmb, \bmc\in \osigma(\bma,D)$.     By \cref{l:still},
  $\pi_k(\bmb) \in \osigma(\bma,D)$ and so
$$\sigma_{i+1}\subseteq \osigma(\bma,D).$$

Since $\sigma_0=\sigma' \subseteq \osigma(\bma,D)$, the above
argument, applied inductively over $i$ implies that $$\sigma'' \subseteq
\osigma(\bma,D).$$

Note that $\bma \notin \sigma''$, since $\max(\sigma_i)$ is
non-increasing along $\mathcal P$ (see \cref{d:grad}),  $\bma \notin \sigma'$, and $\bma$ is larger than all
elements of $\sigma'$, so it cannot be added along the gradient
path. By \cref{l:critical-k} for some $k\in
D$, $$\sigma''=\sigma(\pi_k(\bma), D\ssm \{k\}).$$\end{proof}

Now that the gradient paths and the forms of the associated critical
cells have been established, the proof of \cref{t:critical-cells}
follows.

\begin{proof}[Proof of \cref{t:critical-cells}]  Part~(1) of the
  statement follows directly from \cref{mainresult}(3) and the
  description of the $A$-critical cells as $\sigma(\bma,D)$.  For part~(2)
  suppose $c=\sigma_A$ and $c'=\sigma'_A$ are $i$ and $(i-1)$ cells of
  $X_A$, respectively.  If $c' \leq c$, then by \eqref{e:cells} there
  are two possibilities. The first one is $\sigma' \subseteq \sigma$,
  in which case, since $\sigma'$ is $A$-critical, for some $k\in D$,
  $\sigma'=\sigma(\bma, D\ssm \{k\})$. Otherwise there is a gradient
  path from an $(i-1)$-cell  $\sigma''$ of $\sigma$ to $\sigma'$. Then
  by \cref{d:grad} $\sigma''$ must be a non-critical cell of
  $\sigma$. So $\sigma''=\sigma\ssm \{\bma\}$ and by \cref{l:grad} we
  must have $\sigma'=\sigma(\pi_k(\bma), D \ssm \{k\})$ for some
  $k \in D$.

  Conversely, if $\sigma'=\sigma(\bma, D\ssm \{k\})$ for some $k \in
  D$, then by \eqref{e:cells} $c' \leq c$. Suppose
  $\sigma'=\sigma(\pi_k(\bma), D \ssm \{k\})$.  If $|D|=1$, we have
  $D=\{k\}, \sigma'=\sigma(\pi_k(\bma), \varnothing)$ and
  $\sigma=\sigma(\bma, \{k\})= \{\bma, \pi_k(\bma)\}$. In this case,
  $\sigma' \subseteq \sigma$ and $c' \leq c$ by \eqref{e:cells}. If
  $|D| \geq 2$, then by \cref{l:destination} there is a gradient path
  from $\sigma \ssm \{\bma\}$ to $\sigma'$, which again by
  \eqref{e:cells} implies that $c' \leq c$. This concludes the proof.

\end{proof}

\begin{example} \label{e:list} With $I$ as in 
  \cref{shadedtriangle}, the critical cells for any $\bma \in \cM_2$
  are depicted in \cref{f:critical}. Note that in all
  cases $|\supp(\bma) \ssm \{1\}| \leq 2$, so we can say exactly what
  the cell order in the Morse complex look like from
  \cref{t:critical-cells}.  For example in the case of
  $\bm^{(0,1,1)}$, we note that $\supp(\bm^{(0,1,1)})=\{2,3\}$, so
  we let $D=\{2,3\}$, then $\sigma=\sigma(\bm^{(0,1,1)}, D)$ is a
  critical $2$-cell of $X$ corresponding to a $2$-cell $\sigma_A$ of
  the Morse complex $X_A$. The $1$-cells of $X_A$ contained in
  $\sigma_A$ will be $\sigma'_A$ where $\sigma'$ ranges over the
  following critical $1$-cells of $X$:
$$\sigma(\bm^{(0,1,1)}, \{2\}),
\quad \sigma(\bm^{(0,1,1)}, \{3\}),
\quad \sigma(\pi_2(\bm^{(0,1,1)}), \{3\}),
\quad \sigma(\pi_3(\bm^{(0,1,1)}), \{2\}).
$$

We list the cell order in the Morse complex in the
table below, using critical cells.

$$\begin{array}{c|c|c}
\bma & \bpi(\bma) \mbox{ corresponding to an } 
& \mbox{subsets of  $\bpi(\bma)$ corresponding to}\\
& i-\mbox{cell of } X_A 
& (i-1)-\mbox{sub-cells in $X_A$}\\
\hline
&&\\
\bm^{(0,1,1)} & \{\bm^{(0,1,1)},\bm^{(1,0,1)},\bm^{(0,2,0)}\} &
\{\bm^{(0,1,1)},\bm^{(1,0,1)}\} \\
&&\{\bm^{(0,1,1)},\bm^{(0,2,0)}\}\\
&&\{\bm^{(1,0,1)},\bm^{(1,1,0)}\} \\ 
&&\{\bm^{(0,2,0)},\bm^{(1,1,0)}\} \\ 
\bm^{(1,1,0)} & \{ \bm^{(1,1,0)}, \bm^{(2,0,0)} \} &
\{\bm^{(1,1,0)}\}, \{\bm^{(2,0,0)}\}\\ 
\bm^{(1,0,1)} & \{ \bm^{(1,0,1)}, \bm^{(1,1,0)} \} &
\{\bm^{(1,0,1)}\},\{\bm^{(1,1,0)}\}\\
\bm^{(0,2,0)} & \{ \bm^{(0,2,0)}, \bm^{(1,1,0)}  \} &
\{\bm^{(0,2,0)}\},\{\bm^{(1,1,0)}\}\\  
\bm^{(0,0,2)} & \{ \bm^{(0,0,2)}, \bm^{(0,1,1)}  \} &
\{\bm^{(0,0,2)}\}, \{\bm^{(0,1,1)}\} \\
\bm^{(2,0,0)} & \{ \bm^{(2,0,0)}  \} & \varnothing\\ 
&&\\
\end{array}
$$
 One can show that the $2$-cell of the Morse complex can be represented by  the whole square in \cref{f:critical} of  \cref{shadedtriangle}.
\end{example}

  \begin{example}\label{e:new} Consider the ideal
    $I=(xyzv, xyw, yuvw, xuvw)$ in the polynomial ring
    $S=\kk[x,y,z,u,v,w]$, and let $\Delta$ be the simplicial complex
    $\F(I)^c$, pictured on the left in \cref{f:new}.  Then $\Delta$ is
    a quasi-tree with facet ordering $F_1,F_2,F_3,F_4$ (this is one of
    several possible orders). By \cref{t:FH} $\pd(I)=1$. Using the
    above facet order we set $\tau(2)=1$ and $\tau(3)=\tau(4)= 2$. We
    label the monomial generators of $I$ accordingly as $$m_1=xyzv,
    m_2=xyw, m_3=yuvw, m_4=xuvw.$$
    
\begin{figure}
\begin{tikzpicture}
\coordinate (A) at (0, 0);
\coordinate (B) at (1.5, 0);
\coordinate (C) at (3, -0.65);
\coordinate (D) at (3,0.65 );
\coordinate (E) at (4.5, -0.15);
\coordinate (F) at (4.5, -1.25);
 \draw[black, fill=black] (A) circle(0.06);
 \draw[black, fill=black] (B) circle(0.04);
 \draw[black, fill=black] (C) circle(0.04);
 \draw[black, fill=black] (D) circle(0.04);
 \draw[black, fill=black] (E) circle(0.04);
 \draw[black, fill=black] (F) circle(0.04);
 \draw[line width=0.2mm, fill=lightgray ] (D) -- (B) -- (C)  -- cycle node[ pos=0.5, xshift=-4mm] {$F_2$};
 \draw [line width=0.2mm]  (A) -- (B)  node[pos=0.5, above] {$F_1$};
\draw[line width=0.2mm] (E) -- (C) node[pos=0.5, above] {$F_3$};;
\draw[line width=0.2mm] (F) -- (C) node[pos=0.5, below] {$F_4$};;
\node[label = below :$w$] at (A) {};
\node[label = below  :$u$] at (B) {};
\node[label = below left :$z$] at (C) {};
\node[label = above :$v$] at (D) {};
\node[label = above :$x$] at (E) {};
\node[label = below :$y$] at (F) {};
\end{tikzpicture}
\begin{tikzpicture}
\coordinate (A) at (0, 0);
\coordinate (B) at (1, 1);
\coordinate (C) at (-1, 1);
\coordinate (D) at (0,2 );
\coordinate (E) at (2, 1);
\coordinate (F) at (-2, 1);
\coordinate (G) at (0, -2);
\coordinate (H) at (-1, -1);
\coordinate (I) at (1, -1);
\coordinate (J) at (0, -3);
 \draw[ draw = none,fill=lightgray ] (B) -- (I) -- (G)  -- cycle;
  \draw[draw = none, fill=lightgray] (A) -- (H) -- (G)  -- cycle;
  \draw[draw = none, fill=lightgray] (A) -- (D) -- (B)  -- cycle;
 \draw[black, fill=black] (A) circle(0.06);
 \draw[black, fill=black] (B) circle(0.04);
 \draw[black, fill=black] (C) circle(0.04);
 \draw[black, fill=black] (D) circle(0.04);
 \draw[black, fill=black] (E) circle(0.04);
 \draw[black, fill=black] (F) circle(0.04);
  \draw[black, fill=black] (G) circle(0.04);
 \draw[black, fill=black] (H) circle(0.04);
 \draw[black, fill=black] (I) circle(0.04);
  \draw[black, fill=black] (J) circle(0.04);
\draw[line width=0.2mm] (A) -- (B);
\draw[line width=0.2mm] (A) -- (C);
\draw[line width=0.2mm] (B) -- (D);
\draw[line width=0.2mm] (D) -- (C);
\draw[line width=0.2mm] (C) -- (F);
\draw[line width=0.2mm] (B) -- (E);
\draw[line width=0.2mm] (A) -- (G);
\draw[line width=0.2mm] (H) -- (C);
\draw[line width=0.2mm] (H) -- (G);
\draw[line width=0.2mm] (B) -- (I);
\draw[line width=0.2mm] (I) -- (G);
\draw[line width=0.2mm] (G) -- (J);
\node[label = above :$\bm^{(0,2,0,0)}$] at (A) {};
\node[label = below right :$\bm^{(0,1,1,0)}$] at (B) {};
\node[label = below left :$\bm^{(1,1,0,0)}$] at (C) {};
\node[label = above :$\bm^{(1,0,1,0)}$] at (D) {};
\node[label = above right :$\bm^{(0,0,2,0)}$] at (E) {};
\node[label = above left :$\bm^{(2,0,0,0)}$] at (F) {};
\node[label = left :$\bm^{(0,1,0,1)}$] at (G) {};
\node[label = above left :$\bm^{(1,0,0,1)}$] at (H) {};
\node[label = above  right:$\bm^{(0,0,1,1)}$] at (I) {};
\node[label = below :$\bm^{(0,0,0,2)}$] at (J) {};

\end{tikzpicture}\caption{The figures for \cref{e:new}}\label{f:new}

\end{figure}

    By \cref{mainresult}, $I^2$ has a free resolution supported on a
    CW complex whose cells are in one-to-one correspondence with the
    $A$-critical cells of the Taylor complex of $I^2$. We record these
    critical cells in the right picture in \cref{f:new}.  Note that
    the faces $$\{\bm^{(1,0,1,0)}, \bm^{(0,2,0,0)} \},
    \{\bm^{(1,0,0,1)}, \bm^{(0,2,0,0)} \}, \{\bm^{(0,1,0,1)},
    \bm^{(0,1,1,0)} \}$$ are not $A$-critical cells, hence they are
    not edges in the diagram.

\cref{t:critical-cells} allows us to exactly determine to which
$2$-cell of the Morse complex each of the shaded triangles which are
missing an edge corresponds.  We focus on one such triangle coming
from the critical cell for the monomial generator
$$\bm^{(0,0,1,1)}=m_3m_4=xyu^2v^2w^2$$ of $I^2$. The $2$-dimensional
critical cell of the Taylor complex of $I^2$
$$\sigma=\bpi(\bm^{(0,0,1,1)})=\sigma(\bm^{(0,0,1,1)},\{3,4\})=\{\bm^{(0,0,1,1)},\bm^{(0,1,0,1)},\bm^{(0,1,1,0)}\}$$
corresponds to a $2$-cell $\sigma_A$ of the Morse complex which, by
\cref{t:critical-cells}, contains the
$1$-cells $$(\sigma_1)_A,(\sigma_2)_A,(\sigma_3)_A,(\sigma_4)_A$$ where
$$\begin{array}{ll}
     \sigma_1=\{\bm^{(0,0,1,1)},\bm^{(0,1,0,1)}\},
  &  \sigma_2=\{\bm^{(0,0,1,1)},\bm^{(0,1,1,0)}\},\\  
     \sigma_3=\{\bm^{(0,1,0,1)},\bm^{(0,2,0,0)}\},
  &  \sigma_4=\{\bm^{(0,1,1,0)},\bm^{(0,2,0,0)}\}.\\
\end{array}
$$ The $2$-cell $\sigma_A$ can therefore be visualized as the following
shaded square.
\begin{center}
\begin{tikzpicture}
\coordinate (B) at (2, 0);
\coordinate (C) at (4, 0);
\coordinate (D) at (4,2 );
\coordinate (E) at (2, 2);
 \draw[draw = none, fill=lightgray] (B) -- (C) -- (D)  -- (E) ;
 \draw[black, fill=black] (B) circle(0.04);
 \draw[black, fill=black] (C) circle(0.04);
 \draw[black, fill=black] (D) circle(0.04);
 \draw[black, fill=black] (E) circle(0.04);
\draw[-] (B) -- (C);
\draw[-] (B) -- (E);
\draw[-] (C) -- (D);
\draw[-] (E) -- (D);
\node[label = below :$\bm^{(0,1,1,0)}$] at (B) {};
\node[label = below :$\bm^{(0,2,0,0)}$] at (C) {};
\node[label = above :$\bm^{(0,1,0,1)}$] at (D) {};
\node[label = above :$\bm^{(0,0,1,1)}$] at (E) {};
\end{tikzpicture}
\end{center}

\end{example}


\section{Minimality of the Morse resolution} \label{minimalsection}


 This section is devoted to establishing the minimality of the free
 resolution supported by the Morse complex $X_A$ described in
 \cref{mainresult}. What we need to establish (\cite[Lemma~7.5]{BW})
 is that if $c' \leq c$ are two cells of the CW complex $X_A$ with
 $\dim(c') = \dim(c) - 1$, then $\lcm(c) \neq \lcm(c')$.

   In our case, the critical cells are of the form $\sigma(\bma,D)$
   where $\bma \in \cM_r$ and $D\subseteq \supp(\ba)\ssm\{1\}$ 
   using \cref{n:critical-cells}.  By applying
   \cref{t:critical-cells} we proceed to prove that no two embedded
   cells of $X_A$ come from $A$-critical cells of $X$ with the
   same lcm label.

 Our first statement describes the monomial label of
     the face $\sigma(\bma,D)$ of the Taylor complex. To find the label, multiply $\bma$
     by the free vertices of the leaf $F_j$ of
     the quasi-tree $\langle F_1,\ldots,F_j \rangle$ in
     \cref{construct} for all $j \in D$.

\begin{proposition}
\label{p:sigmalcm}
Let $\bma\in \cM_r$, $D\subseteq \supp(\ba)\ssm\{1\}$.  Then
$$\lcm \sigma(\bma, D) = \bma \displaystyle \prod_{j \in D} \prod_{x \in
  F_{j}\ssm F_{\tau(j)}} x.$$ 
\end{proposition}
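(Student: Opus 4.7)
My plan is to establish the formula by computing the exponent of each variable $x$ in $\lcm \sigma(\bma, D)$ variable-by-variable and matching it with the right-hand side. Since $m_j$ and $m_{\tau(j)}$ are square-free and $\pi_j(\bma) = \bma \cdot m_{\tau(j)}/m_j$, the exponent of $x$ in $\pi_j(\bma)$ exceeds that in $\bma$ by exactly $[x\mid m_{\tau(j)}] - [x \mid m_j]$, which is $+1$ precisely when $x \mid m_{\tau(j)}$ and $x \nmid m_j$, i.e., when $x \in F_{\tau(j)}^c \cap F_j = F_j \ssm F_{\tau(j)}$; otherwise it is $0$ or $-1$. Consequently,
$$\lcm(\bma, \pi_j(\bma)) = \bma \cdot \prod_{x \in F_j \ssm F_{\tau(j)}} x,$$
which handles the case $|D|=1$.

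The central combinatorial step is to show that the sets $\{F_j \ssm F_{\tau(j)} : j \in D\}$ are pairwise disjoint. I would argue by contradiction: if $x$ lies in both $F_j \ssm F_{\tau(j)}$ and $F_{j'} \ssm F_{\tau(j')}$ for some $j' < j$ in $D$, then $x \in F_j \cap F_{j'}$. But since $F_{\tau(j)}$ is a joint of $F_j$ in the quasi-tree $\langle F_1, \dots, F_j \rangle$ produced by \cref{construct}, the defining property of a joint forces $F_j \cap F_{j'} \subseteq F_{\tau(j)}$, so $x \in F_{\tau(j)}$, contradicting $x \in F_j \ssm F_{\tau(j)}$.

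With disjointness in hand, the exponent of a variable $x$ in $\lcm \sigma(\bma, D)$ equals the maximum over $\{\bma\} \cup \{\pi_j(\bma) : j \in D\}$ of the exponent of $x$ in each element. By the first paragraph, this maximum is the exponent of $x$ in $\bma$ plus one exactly when $x$ lies in the (necessarily unique) set $F_j \ssm F_{\tau(j)}$ for some $j \in D$, and equals the exponent of $x$ in $\bma$ otherwise. This matches the right-hand side of the claimed formula because disjointness yields
$$\prod_{j \in D} \prod_{x \in F_j \ssm F_{\tau(j)}} x \;=\; \prod_{x \in \bigcup_{j \in D}(F_j \ssm F_{\tau(j)})} x,$$
with each variable appearing to the first power.

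The main obstacle is the disjointness step, which is where the leaf-ordering of $\F(I)^c$ provided by \cref{construct} enters in an essential way; once that is in place the remaining argument is a routine variable-by-variable check. Note also that the hypothesis $D \subseteq \supp(\ba) \ssm \{1\}$ ensures $\tau(j) < j$ for all $j \in D$, which is precisely what the joint argument above requires.
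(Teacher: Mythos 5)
Your proof is correct, and it makes explicit a point that the paper's proof leaves tacit. The paper argues by induction on $|D|$: with $k=\max(D)$ and $D'=D\ssm\{k\}$, the inductive step asserts
\[
\lcm\Bigl(\bma\prod_{j\in D'}\prod_{x\in F_j\ssm F_{\tau(j)}}x,\ \tfrac{\bma\, m_{\tau(k)}}{m_k}\Bigr)
= \bma\prod_{j\in D'}\prod_{x\in F_j\ssm F_{\tau(j)}}x\cdot\prod_{x\in F_k\ssm F_{\tau(k)}}x ,
\]
and this equality is \emph{only} correct because the sets $F_j\ssm F_{\tau(j)}$, $j\in D$, are pairwise disjoint: a variable $x\in F_k\ssm F_{\tau(k)}$ raises the exponent of $\pi_k(\bma)$ above that of $\bma$ by $1$, but if $x$ also lay in some $F_j\ssm F_{\tau(j)}$ for $j\in D'$, the left side would already account for it and the right side would overcount. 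The paper never states or proves this disjointness. You not only state it but prove it cleanly from the joint property of \cref{construct} (if $j'<j$ then $F_j\cap F_{j'}\subseteq F_{\tau(j)}$, because $F_{\tau(j)}$ is a joint of the leaf $F_j$ in $\langle F_1,\dots,F_j\rangle$), which is exactly the fact the paper's inductive step silently relies on. Your variable-by-variable computation of the lcm then replaces the induction with a single max argument; both routes are valid, but yours surfaces the essential combinatorial lemma rather than burying it, which makes the role of the quasi-tree ordering easier to see.
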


\begin{proof} We use induction on $d=|D|$. If $d=0$, then $D=\varnothing$ and
  by \cref{n:critical-cells} we have $\sigma(\bma, D) = \bma$, so
  there is nothing to prove.
  If $d=1$, then $D=\{k\}$ and (see \cref{construct})
  \begin{align*}
    \lcm \sigma(\bma,\{k\})
    = &\lcm \left ( \bma, \pi_k(\bma)\right )
    =  \lcm \left ( \bma, \frac{\bma \cdot m_{\tau(k)}}{m_k} \right )\\
    = &  \bma \cdot \displaystyle  \prod_{x \mid m_{\tau(k)}, \ x \nmid m_k} x 
    =  \bma \cdot \displaystyle \prod_{x \notin F_{\tau(k)}, \ x \in F_k} x
    = \bma \cdot \displaystyle  \prod_{x \in F_{k}\ssm F_{\tau(k)}} x. \\
  \end{align*}

  If $d>1$, and $k=\max(D)$, let $D'=D \ssm \{k\}$. Then, once again
  using \cref{construct} as in the base case, plus the induction
  hypothesis on $|D'|$, we have
  \begin{align*} \lcm \sigma(\bma,D)
    =&\lcm \left( \lcm \sigma(\bma,D'), \pi_k(\bma) \right ) \\
    =&\lcm \left( \bma \displaystyle \prod_{j \in D'} \prod_{x \in
      F_{j}\ssm F_{\tau(j)}} x, \ \frac{\bma \cdot m_{\tau(k)}}{m_k} \right )\\
    =& \bma \displaystyle \prod_{j \in D'} \prod_{x \in
      F_{j}\ssm F_{\tau(j)}} x \cdot \prod_{x \in F_{k}\ssm F_{\tau(k)}} x \\
        =& \bma \displaystyle \prod_{j \in D} \prod_{x \in
      F_{j}\ssm F_{\tau(j)}} x. \\
  \end{align*}
\end{proof}

\begin{theorem}[{\bf The resolution supported on $X_A$ is
      minimal}]\label{t:minimality} The free resolution that is
  supported on the CW Morse complex $X_A$, with $A$ being the acyclic
  matching as in \eqref{A}, is minimal.
\end{theorem}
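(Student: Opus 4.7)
The plan is to invoke the criterion of~\cite[Lemma~7.5]{BW}: the cellular resolution supported on $X_A$ is minimal provided that $\lcm(c)\ne \lcm(c')$ whenever $c'\le c$ are cells of $X_A$ with $\dim(c')=\dim(c)-1$.  By \cref{t:critical-cells}, each cell of $X_A$ has the form $\sigma_A$ with $\sigma=\sigma(\bma,D)$ for some $\bma\in \cM_r$ and $D\subseteq \supp(\bma)\ssm\{1\}$; and when $\dim(c')=\dim(c)-1$, part~(2) of that theorem forces $c=\sigma(\bma,D)_A$ and either
\[
\text{(i)}\ c'=\sigma(\bma,D\ssm\{k\})_A \qor \text{(ii)}\ c'=\sigma(\pi_k(\bma),D\ssm\{k\})_A
\]
for some $k\in D$.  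It therefore suffices to verify the LCM inequality in each of these two cases using the explicit formula from \cref{p:sigmalcm}.

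Set $T=\prod_{j\in D\ssm\{k\}}\prod_{x\in F_j\ssm F_{\tau(j)}}x$; then \cref{p:sigmalcm} gives
\[
\lcm \sigma(\bma,D) = \bma\,T\prod_{x\in F_k\ssm F_{\tau(k)}}x,\qquad \lcm \sigma(\bma,D\ssm\{k\}) = \bma\,T,
\]
and $\lcm \sigma(\pi_k(\bma),D\ssm\{k\}) = \pi_k(\bma)\,T$.  In case~(i) the two labels differ by the factor $\prod_{x\in F_k\ssm F_{\tau(k)}}x$, which is a nontrivial monomial provided $F_k\ssm F_{\tau(k)}\ne\varnothing$.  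In case~(ii), since $\pi_k(\bma)=\bma\cdot m_{\tau(k)}/m_k$ with $m_i=\prod_{x\notin F_i}x$, the discrepancy involves both $F_k\ssm F_{\tau(k)}$ and $F_{\tau(k)}\ssm F_k$, and the argument will hinge on $F_{\tau(k)}\ssm F_k\ne\varnothing$.  Both nonemptiness statements follow from the simple geometric observation that $F_k$ and $F_{\tau(k)}$ are distinct facets of the simplicial complex $\F(I)^c$, and hence are incomparable by maximality of facets.

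The main obstacle is case~(ii): the ratio $m_{\tau(k)}/m_k$ introduces the variables in $F_k\ssm F_{\tau(k)}$ while removing those in $F_{\tau(k)}\ssm F_k$, so the comparison is not a clean multiplication.  The plan is to pick any variable $y\in F_{\tau(k)}\ssm F_k$ (nonempty by the above observation) and track the exponent of $y$ on the two sides.  Since $y\notin F_k$ we have $y\mid m_k$, and since $y\in F_{\tau(k)}$ we have $y\nmid m_{\tau(k)}$, so the $y$-exponent of $\pi_k(\bma)$ is one less than that of $\bma$; on the other hand, since $y\notin F_k\ssm F_{\tau(k)}$, the extra factor $\prod_{x\in F_k\ssm F_{\tau(k)}}x$ on the $\lcm \sigma(\bma,D)$ side contributes nothing to the $y$-exponent.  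It follows that the $y$-exponent of $\lcm \sigma(\bma,D)$ exceeds that of $\lcm \sigma(\pi_k(\bma),D\ssm\{k\})$ by exactly one, so the two labels disagree.  This completes the verification of the LCM criterion and proves minimality.
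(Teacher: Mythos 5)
Your proposal is correct and follows essentially the same route as the paper: both reduce minimality to the lcm criterion of \cite[Lemma~7.5]{BW}, characterize the subcell pairs via \cref{t:critical-cells}, and compute lcm's via \cref{p:sigmalcm}, using the same underlying fact that $F_k$ and $F_{\tau(k)}$ are incomparable facets. The only difference is stylistic in case~(ii): the paper supposes equality and derives the contradiction $m_k \mid m_{\tau(k)}$ (equivalently $F_{\tau(k)} \subseteq F_k$), whereas you directly exhibit a variable $y \in F_{\tau(k)} \ssm F_k$ whose exponents in the two lcm's differ by one; these are two packagings of the same observation.
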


\begin{proof} By ~\cite[Lemma~7.5]{BW} the resolution is
minimal if and only if for $\sigma_A$ an $i$-cell and $\sigma_A'$ an
$(i-1)$-cell of $X_A$ such that $\sigma_A'\le \sigma_A$, then
$\lcm(\sigma)\ne \lcm(\sigma')$.  

 Using \cref{t:critical-cells} we know that, if $\sigma=\sigma(\bma,
 D)$ with $\bma\in \cM_r$ and $D \subseteq \supp(\bma \ssm \{1\})$,
 then $\sigma'=\sigma(\bma,D \ssm\{k\})$ or
 $\sigma'=\sigma(\pi_k(\bma),D \ssm\{k\})$ for some $k\in D$.

 \begin{itemize}

 \item If $\sigma'=\sigma(\bma,D \ssm\{k\})$, then by
   \cref{p:sigmalcm}
   $$\lcm \sigma = \lcm \sigma' \cdot \prod_{x \in F_{k}\ssm
     F_{\tau(k)}} x.$$ Since $F_k \ssm F_{\tau(k)}$ consists of the
   free vertices of the leaf $F_k$ of the quasi-tree
   $\langle F_1,\ldots,F_k \rangle$, we have 
   $F_k \ssm F_{\tau(k)} \neq \varnothing$. Thus 
   $\lcm(\sigma) \neq \lcm(\sigma')$.

 \item If $\sigma'=\sigma(\pi_k(\bma),D \ssm\{k\})$, then suppose
   $\lcm(\sigma)=\lcm(\sigma')$. From \cref{p:sigmalcm} it  follows
   that
   $$\pi_k(\bma) \displaystyle \prod_{j \in D\ssm \{ k \}} \prod_{x \in
     F_{j}\ssm F_{\tau(j)}} x
   = \bma \displaystyle \prod_{j \in D} \prod_{x \in
     F_{j}\ssm F_{\tau(j)}} x.$$ Replacing $\pi_k(\bma)$ in the equation  and simplifying both sides, we have
   $$ \displaystyle \frac{\bma \cdot m_{\tau(k)}}{m_k}
   = \bma \displaystyle \prod_{x \in
     F_{k}\ssm F_{\tau(k)}} x,$$ which with further simplification results in
   $$m_{\tau(k)} =  m_k \prod_{x \in F_{k}\ssm
     F_{\tau(k)}} x.$$ This last equality implies that
   $m_k \mid m_{\tau(k)}$ which (see \cref{construct}) means that
   $F_{\tau(k)} \subseteq F_k$, a contradiction to $F_{\tau(k)}$ and
       $F_k$ both being facets of a simplicial complex, and $\tau(k) \lneq k$.
 \end{itemize}\end{proof}

Note that an alternative approach to
  showing minimality is to explicitly show that the Morse complex is
  isomorphic to the convex cellular complex defined in \cite{koszul}.

  \begin{corollary}[{\bf The projective dimension of $I^r$}]\label{c:pd-I^r}
    If $I$ is  generated by $q$ square-free monomials in a
   polynomial ring $S$, $I$ has projective dimension one , and $r$ is
    a positive integer, then
\begin{equation}
  \pd_S(I^r)=\begin{cases} q-1 & \qif r\ge q-1\\
                          r & \qif r< q-1
\end{cases}
\end{equation}
\end{corollary}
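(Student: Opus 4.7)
The plan is to derive the projective dimension directly from the description of the minimal free resolution furnished by \cref{mainresult} and \cref{t:minimality}: the projective dimension is the top dimension of a cell appearing in $X_A$, so I just need to determine the largest possible $|D|$ as $\bma$ ranges over $\cM_r$ and $D$ ranges over the allowable index sets.

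First I would invoke \cref{t:minimality} to say that $X_A$ supports a \emph{minimal} free resolution of $I^r$, so $\pd_S(I^r)$ equals the largest $i$ such that $X_A$ has an $i$-cell. By \cref{mainresult}(2)–(3), the $i$-cells of $X_A$ are in bijection with the $A$-critical $i$-cells of $X$, which in the notation of \cref{n:critical-cells} are precisely the cells $\sigma(\bma,D)$ with $\bma\in \cM_r$, $D\subseteq \supp(\bma)\ssm\{1\}$, and $|D|=i$. Hence
$$\pd_S(I^r) = \max\Bigl\{\,|D| \;\Bigm|\; \bma\in \cM_r,\; D\subseteq \supp(\bma)\ssm\{1\}\,\Bigr\} = \max_{\bma \in \cM_r}\, \bigl|\supp(\bma)\ssm\{1\}\bigr|.$$

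Next I would bound $|\supp(\bma)\ssm\{1\}|$. The set $\supp(\bma)\ssm\{1\}$ is contained in $\{2,\dots,q\}$, so its cardinality is at most $q-1$. It also consists of indices $j\neq 1$ with $a_j\ge 1$, so its cardinality is at most $\sum_{j\neq 1} a_j \le |\ba|=r$. Therefore $\pd_S(I^r)\le \min\{r,q-1\}$.

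Finally I would exhibit explicit $\ba\in \cN_r$ realizing these bounds. If $r\ge q-1$, take $\ba=(r-(q-1),1,1,\dots,1)$ (or $(0,1,\dots,1)$ when $r=q-1$), so that $\{2,\dots,q\}\subseteq \supp(\ba)$ and $|\supp(\bma)\ssm\{1\}|=q-1$. If $r<q-1$, take $\ba$ with $a_2=a_3=\cdots=a_{r+1}=1$ and all other entries zero, giving $|\supp(\bma)\ssm\{1\}|=r$. Combining the upper and lower bounds yields the stated formula, and there is no real obstacle here: the work is entirely in \cref{mainresult} and \cref{t:minimality}, and this corollary is a routine extraction of the maximal critical-cell dimension.
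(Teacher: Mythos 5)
Your proof is correct and follows essentially the same route as the paper: invoke \cref{mainresult} and \cref{t:minimality} to reduce $\pd_S(I^r)$ to the maximum of $|\supp(\ba)\ssm\{1\}|$ over $\ba\in\cN_r$, bound it by $\min\{r,q-1\}$, and exhibit vectors achieving the bound. The only cosmetic difference is the choice of witness vector in the $r\ge q-1$ case (you place the excess on $a_1$, the paper on $a_q$), which is immaterial.
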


  \begin{proof} 
  By \cref{mainresult,t:minimality} we know that the Morse complex
  $X_A$ from \cref{mainresult} supports a minimal free resolution of
  $I^r$ and for any $k$, the $k$-cells of $X_A$ are in one-to-one
  correspondence with the $A$-critical $k$-cells of $X=\taylor(I)$.

Therefore the value of $\pd_S (I^r)$ is equal to the largest size of a
critical cell.  Recall that the critical cells have the form
$\sigma(\bma, D)$ with $\bma\in \cM_r$ and $D\subseteq
\supp(\ba)\ssm\{1\}$, and thus the largest cell has
$D=\supp(\ba)\ssm\{1\}$. We have then
$$\pd_S(I^r)=\max\{|\supp(\ba)\smallsetminus \{1\}| \mid \ba\in
(\NN \cup \{0\})^q, a_1+a_2+\dots+a_q=r\}.
$$

The maximum in the expression above is obtained by choosing $\ba$
such that $a_i=1$ for as many values of $i>1$ as possible. When $q\le
r+1$, the maximum is obtained, for example, when $\ba=(0,1,\dots, 1,
r-q+2)$; the maximum is $q-1$ in this case.  When $q> r+1$, the
maximum is achieved, for example, when $\ba=(0,1,\dots, 1, 0,\dots, 0)$, where
precisely $r$ entries are non-zero; the maximum is thus $r$ in this
case.
\end{proof}

It is worth noting that the precise formula for the projective
dimension of all powers also provides a precise formula for the depths
and in particular, the formula above pinpoints exactly where this
sequence stabilizes. This stabilizing point is referred to as the {bf
  index of depth stability} in the literature and is denoted
$\dstab(I)$. Finding bounds for $\dstab(I)$ is an active area.

  \begin{corollary}\label{c:dstab}
If $I$ is generated by $q$ square-free monomials and has projective dimension $1$, then $\dstab (I)=n-q+1$. \end{corollary}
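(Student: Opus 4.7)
The plan is to derive this immediately from \cref{c:pd-I^r} via the Auslander--Buchsbaum formula. Recall that $\dstab(I)$ is defined as the smallest positive integer $s$ such that $\dpth(S/I^r) = \dpth(S/I^s)$ for all $r \ge s$; equivalently, by Auslander--Buchsbaum applied to the finitely generated $S$-module $S/I^r$ over the regular ring $S = \kk[x_1,\ldots,x_n]$, it is the smallest $s$ such that $\pd_S(S/I^r) = \pd_S(S/I^s)$ for all $r \ge s$. Since $\pd_S(S/I^r) = \pd_S(I^r) + 1$, it suffices to work directly with \cref{c:pd-I^r}.

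First I would read off from \cref{c:pd-I^r} that the function $r \mapsto \pd_S(I^r)$ takes the value $r$ for $r < q-1$ and the constant value $q-1$ for all $r \ge q-1$. This shows that the sequence is eventually constant from $r = q-1$ onward, so $\dstab(I) \le q-1$. To obtain the reverse inequality, I would check that the value at $r = q-2$ does not yet equal the stable value: indeed $\pd_S(I^{q-2}) = q-2 \ne q-1 = \pd_S(I^{q-1})$, so the sequence has not stabilized before $r = q-1$. Combining the two bounds gives $\dstab(I) = q-1$.

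The only mild subtlety is the degenerate case $q = 1$, where $I$ is a principal ideal and $\pd_S(I^r) = 0$ for all $r$, so stability holds from $r = 1 = q-1$ onward; the formula still reads $\dstab(I) = q-1 = 0$ if one adopts the convention that a constant sequence stabilizes at $r=1$ (or equivalently at the first index at which the value is attained). No real obstacle arises — this corollary is a bookkeeping consequence of \cref{c:pd-I^r} and the Auslander--Buchsbaum formula, so the entire proof is a one-paragraph verification of the two inequalities $\dstab(I) \le q-1$ and $\dstab(I) \ge q-1$.
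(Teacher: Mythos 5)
Your proof is correct and is essentially identical to the paper's, which also deduces the result immediately from \cref{c:pd-I^r} and the Auslander--Buchsbaum formula. The only remark is that the degenerate case $q=1$ you raise cannot occur, since a principal monomial ideal has projective dimension $0$, contradicting the hypothesis $\pd_S(I)=1$; so $q \ge 2$ and the comparison $\pd_S(I^{q-2}) = q-2 \ne q-1$ is always available.
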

 
\begin{proof}
This follows immediately from \cref{c:pd-I^r} and the Auslander-Buchsbaum formula (see \cite[Theorem 19.9]{eisenbud}).
\end{proof}

Other invariants that can be read from an explicit minimal graded free
resolution of a graded module are maximal shifts and Castelnuovo-Mumford
regularity. In \cref{ss:supported} we defined the multi-graded
Betti numbers of a monomial ideal $I$, denoted $\beta_{i,m}(I)$ for
each integer $i$ and each $m\in \LCM(I)$. For each integer $i$, the
invariant
\begin{equation}\label{e:shift}
t_i^S(I)=\sup\{j\in \mathbb Z\mid \beta_{i,m}(I)\ne 0 \text{ for some $m\in \LCM(I)$ with $\deg(m)=j$}\}
\end{equation}
is precisely the {\bf maximal shift} (with respect to total degree) of
a free module that occurs in the $i$th component of a minimal graded
free resolution of $I$.  The {\bf Castelnuovo-Mumford regularity} of
an ideal $I$ is defined by
\begin{equation}\label{e:regularity}
\reg_S(I)=\sup_{i\ge 0}\{t_i^S(I)-i\}\,.
\end{equation}
We establish a formula for maximal shifts and
regularity in the case of interest for our paper.

\begin{corollary}
\label{c:shifts}
If $I$ has projective dimension $1$ and is minimally generated by
square-free monomials $m_1,\ldots,m_q$ in a polynomial ring $S$ over a
field, then for all $i\le r$ we have:
\begin{align*}
  t_i^S(I^r)= &\max \Big \{\sum_{j\in D} \deg(\lcm(m_j, m_{\tau(j)})) \mid D\subseteq [q]\smallsetminus \{1\}, |D|=i\Big \}\\
  &+ \quad (r-i)\max\{ \deg(m_j) \mid j\in [q]\}  \,.
\end{align*}
In particular, $r\mapsto t_i^S(I^r)$ is a linear function when $r\ge i$. 
\end{corollary}

\begin{proof}
The description  of the minimal free resolution of $I^r$ as the free resolution supported on the CW complex $X_A$  (\cref{mainresult,t:minimality}) shows 
\begin{align*}
\beta_{i,m}(I^r)\ne 0\iff & m=\lcm\sigma(\bma,D) \text{ for some   $\bma\in \cM_r$}\\ &\text{and $D\subseteq \supp(\ba)\smallsetminus\{1\}$ with $|D|=i$}
\end{align*}
and hence, using \cref{p:sigmalcm}, we have  
\begin{align*}
  t_i^S(I^r)&=\max \{\deg(\lcm\sigma(\bma, D))\mid \bma\in \cM_r, D\subseteq \supp(\ba)\smallsetminus\{1\}, |D|=i \}\\
  &=\max \{\deg \Big (\bma \displaystyle \prod_{j \in D} \prod_{x \in
  F_{j}\ssm F_{\tau(j)}} x  \Big ) \mid \bma \in \cM_r, D \subseteq \supp(\ba)\smallsetminus\{1\}, |D|=i \}.\\
\end{align*}

Let $u\in [q]$ be such that $$\deg(m_u)=\alpha=\max \{\deg(m_j) \mid j\in [q] \}\,.$$ 
Now  let $\ba\in
\cM_r$ and $D=\{k_1, \dots, k_i\}\subseteq
\supp(\ba)\smallsetminus\{1\}$. If  ${\bf e}_1, \dots, {\bf e}_q$ is the standard basis of $\mathbb R^q$,  set
$$\bb=\ba-\sum_{j\in D}{\bf e}_j\in \cM_{r-i} \quad \text{ so that }\quad 
\bma=m_{k_1}\dots m_{k_i}\bmb.$$ 
Using \cref{p:sigmalcm}, we see that 
\begin{align*}
\deg (\lcm \sigma(\bma, D))&=\deg(\bma)+\sum_{j\in D}|F_j\smallsetminus F_{\tau(j)}|\\
&=\deg(\bmb)+\sum_{j\in D}\left(\deg(m_j)+|F_j\smallsetminus F_{\tau(j)}|\right)\\
&\le \alpha(r-i)+\sum_{j\in D}\left(\deg(m_j)+|F_j\smallsetminus F_{\tau(j)}|\right)\,.\\
\end{align*}

On the other hand, if we let $$\bmc=m_{k_1}\dots m_{k_i}(m_u)^{r-i}
\in \cM_r$$ we observe that $D \subseteq \supp(\bmc)$ and, by
\cref{p:sigmalcm}, $$\alpha(r-i)+\sum_{j\in
  D}\left(\deg(m_j)+|F_j\smallsetminus F_{\tau(j)}|\right)=
\deg(\lcm\sigma(\bmc, D)).$$ This implies that
$$
t_i^S(I^r)=\alpha(r-i)+\max \Big \{\sum_{j\in D}\left(\deg(m_j)+|F_j\smallsetminus F_{\tau(j)}|\right)\colon D\subseteq [q]\smallsetminus \{1\}, |D|=i \Big \}.
$$ The formula in the statement follows  by noting that, by
\cref{p:sigmalcm} for all $j\in[q]$
$$
\deg(\lcm(m_j, m_{\tau(j)}))=\deg(m_j)+|F_j\smallsetminus F_{\tau(j)}|.
$$ 
\end{proof}

It is known that the regularities of the powers of a homeogeneous
ideal in a polynomial ring are asymptotically linear
(\cite[Theorem~4.7]{powerideals}, \cite{CDHT,K00,TW}). In other words,
there are integers $a,b$ such that $$\reg(I^r)=ar+b$$ for all
$r\gg 0$. \cref{c:reg} below fine tunes this fact for square-free
monomial ideals of projective dimension~1.  

\begin{corollary}[{\bf The regularity of $I^r$}]\label{c:reg}
  If $I$ has projective dimension $1$ and is minimally generated by
square-free monomials $m_1,\ldots,m_q$ in a polynomial ring $S$ over a
field, then
$$\reg_S(I^r)=\alpha r + (1-q)\alpha +\reg_S(I^{q-1})$$
for all $r\ge q-1$ where $\alpha=\max \{\deg(m_j) \mid j\in [q] \}.$ 
\end{corollary}

\begin{proof}
With $\alpha=\max\{\deg(m_j) \mid j\in [q]\}$, \cref{c:shifts} gives that for all $r>0$ and all $i\le r$ there exists an integer $c_i$ such that 
\begin{equation}
\label{e:ta}
t_i^S(I^r)=\alpha r+c_i\,.
\end{equation}
In view of \cref{c:pd-I^r}, we have 
\begin{equation}
\label{e:sup}
\reg_S(I^r)=\sup_{0\le i\le q-1}\{t_i^S(I^r)-i\}\,.
\end{equation}
 If $r\ge q-1$, then for any $i$ with $0\le i\le q-1$ we also have $i\le r$, and \eqref{e:ta} gives
\begin{equation}
\label{e:t}
t_i^S(I^r)=\alpha r+c_i=(r-q+1)\alpha+\left((q-1)\alpha+c_i\right)=(r-q+1)\alpha +t_i^S(I^{q-1})\,.
\end{equation}
The desired conclusion follows from \eqref{e:sup} and \eqref{e:t}. 
\end{proof}


  \bigskip
  
\subsubsection*{Acknowledgements} 

The research leading to this paper was initiated during the week-long
workshop ``Women in Commutative Algebra'' (19w5104) which took place
at the Banff International Research Station (BIRS). The authors would
like to thank the organizers and acknowledge the
hospitality of BIRS and the additional support provided by the
National Science Foundation (NSF), DMS-1934391.

For this work Liana \c Sega and Sandra Spiroff were supported in
part by grants from the Simons Foundation (\#354594, \#584932,
respectively), and Susan Cooper and Sara Faridi were
supported by Natural Sciences and Engineering Research Council of
Canada (NSERC).

The authors are grateful to Volkmar Welker for useful background
information. The computations for this project were done using the
computer algebra software Macaulay2~\cite{M2}.  Finally,
  the authors thank both referees for carefully reading the paper and
  providing many insightful comments.

For the last author this material is based upon work supported by and
while serving at the National Science Foundation. Any opinion,
findings, and conclusions or recommendations expressed in this
material are those of the authors and do not necessarily reflect the
views of the National Science Foundation.

\subsubsection*{Data Availability} Data sharing not applicable to this
article as no datasets were generated or analysed during the current
study.

\bibliographystyle{plain}

\bibliography{bibliography}

\end{document}